\newtheorem{theorem}{Theorem}[section]
\newtheorem{lemma}[theorem]{Lemma}
\newtheorem{corollary}[theorem]{Corollary}
\newtheorem{proposition}[theorem]{Proposition}
\newtheorem{remark}[theorem]{Remark}
\newtheorem{definition}[theorem]{Definition}
\newtheorem{fact}{Fact}
\newtheorem{maintheorem}{Theorem}
\newenvironment{customthm}[1]
  {\innercustomthm}
  {\endinnercustomthm}
\def\N{\mathbb{N}}
\def\P{\mathbb{P}}
\def\Z{\mathbb{Z}}
\def\R{\mathbb{R}}
\def\E{\mathbb{E}}
\newcommand{\B}{\mathbb{B}}
	\renewcommand{\P}{\mathbb{P}}
\newcommand{\cB}{\mathcal{B}}
\newcommand{\cF}{\mathcal{F}}
\newcommand{\cL}{\mathcal{L}}
\newcommand{\cM}{\mathcal{M}}
\newcommand{\cP}{\mathcal{P}}
\newcommand{\cS}{\mathcal{S}}
\newcommand{\cU}{\mathcal{U}}
\newcommand{\bx}{\mathbf{x}}
\newcommand{\by}{\mathbf{y}}
\newcommand{\bY}{\mathbf{Y}}
\newcommand{\bX}{\mathbf{X}}
\newcommand{\ce}{\mathcal{E}}
\newcommand{\e}{\varepsilon}
\begin{document}
\title[Delocalization of Polymers]{Delocalization of Polymers in Lower Tail Large Deviation}
\author{Riddhipratim Basu}
\address{Riddhipratim Basu, Intenational Centre for Theoretical Sciences, Tata Institute of Fundamental Research, Bangalore, India}
\email{rbasu@icts.res.in}
\author{Shirshendu Ganguly}
\address{Shirshendu Ganguly, Department of Statistics, UC Berkeley, Berkeley, CA, USA}
\email{sganguly@berkeley.edu}
\author{Allan Sly}
\address{Allan Sly, Department of Mathematics, Princeton University, Princeton, NJ, USA}
\email{allansly@princeton.edu}
\date{}
\maketitle

\begin{abstract}

Directed last passage percolation models on the plane, where one studies the weight as well as the geometry of optimizing paths (called polymers) in a field of i.i.d.\ weights, are paradigm examples of models in KPZ universality class. In this article, we consider the large deviation regime, i.e., when the polymer has a much smaller (lower tail) or larger (upper tail) weight than typical. Precise asymptotics of large deviation probabilities have been obtained in a handful of the so-called exactly solvable scenarios,  including the Exponential~\cite{Jo99} and Poissonian~\cite{DZ1, sepLDP} cases. How the geometry of the optimizing paths change under such a large deviation event was considered in~\cite{DZ1} where it was shown that the paths (from $(0,0)$ to $(n,n)$, say) remain concentrated around the straight line joining the end points  in the upper tail large deviation regime, but  the corresponding question in the lower tail was left open. We establish a contrasting behaviour in the lower tail large deviation regime, showing that conditioned on the latter, in both the models, the optimizing paths are not concentrated around any deterministic curve. Our argument does not use any ingredient from integrable probability, and hence can be extended to other planar last passage percolation models under fairly mild conditions; and also to other non-integrable settings such as high dimensions.

\end{abstract}
\tableofcontents

\section{Introduction and Main results}

Last passage percolation models on the plane are paradigm examples of models believed to be in the KPZ universality class. In these models, vertices of $\Z^2$ are equipped with independent and identically distributed random weights. The weight of a path is the sum of the weights along it, and the last passage time between two points is obtained by maximizing the weight among all directed paths between them (see Section \ref{def1} for precise definitions). Although the asymptotic behaviour is believed to be universal under mild conditions on the passage time distribution, detailed understanding of these models has so far been mostly restricted to a handful of exactly solvable case where very fine information, both algebraic and geometric, is obtained using formulae from integrable probability. Although our results hold for last passage percolation with fairly general edge weight distribution; for the sake of concreteness we shall focus, for much of this article, on the interesting special case of the exactly solvable model with exponentially distributed edge weights, while deferring until later the extension to more general settings.

This case of Exponential directed last passage percolation (LPP) is very well studied, in particular because of the correspondence with Totally Asymmetric Simple Exclusion Process (TASEP) on a line. Using the understanding of invariant measures of TASEP, already in 1981, Rost \cite{Ro81} evaluated the limiting shape for this model; in particular he showed the following. Let $L_n$ denote the last passage time from $(0,0)$ to $(n,n)$, then $\E L_n/n \to 4$. Rost's results \cite{Ro81} in particular show that the limit shape for exponential last passage percolation is strictly concave; this, together with some basic concentration estimates (e.g.\ in \cite{Tal94}), imply that the maximal path (henceforth called the geodesic, or the polymer) from $(0,0)$ to $(n,n)$ is with high probability concentrated around the straight line joining the two points. More precise results were obtained later using exact determinantal formulae: Johansson \cite{Jo99} established the $n^{1/3}$ fluctuation of $L_n$ and a Tracy-Widom scaling limit; a more precise version of that also implies that the fluctuation of the geodesic around the diagonal line is of the order $n^{2/3}$ (see \cite{J00,BSS14} for more details).

Along with the typical behaviour; large deviation behaviour of $L_n$ (i.e., when the deviation of $L_n$ from $4n$ is linear in $n$) has also attracted attention. In fact Johansson \cite{Jo99} obtained large deviation rate functions for $L_n$, i.e., the precise rates of decay for probabilities that $L_n$ is either much larger or much smaller than typical (see Theorem \ref{t:ldp}). There has been a great deal of interest in the general theory of large deviations to understand the geometric consequences of conditioning on rare large deviation events. In this paper we study the geometry of the geodesic in last passage percolation when the passage time $L_n$ is conditioned to be atypical.

It is at least heuristically not too hard to see that, in the  upper tail large deviations regime, i.e., when the last passage time is conditioned to be macroscopically larger than typical; the geodesic is still localized with high probability around the diagonal. The picture in the lower tail large deviations regime is more complicated. We establish a contrasting delocalization result in this case. Our main result, Theorem \ref{t:deloc}, shows that conditioned on the last passage time being much smaller than typical, the geodesic is not localized around any deterministic curve. The different behaviour in the two tails is intimately connected to the different speeds at which large deviation occurs in the upper tail and lower tail regimes respectively, (see Section \ref{s:outline} for further elaboration along these lines).

In the context of Poissonian directed last passage percolation (henceforth to be referred to as Poissonian LPP) on the plane, another model in the KPZ universality class, which has the same qualitative behaviour as the exponential model, this question was investigated by Deuschel and Zeitouni \cite{DZ1}, who obtained explicit formulae for the rate function and showed that conditioned on the upper tail large deviation event, the geodesic is indeed localized around the diagonal. However the question about whether or not similar behaviour is observed in the lower tail large deviation regime was left open. We answer this question by showing a similar delocalization behaviour in the lower tail in this case as well (see Theorem \ref{informal1}).

Although both Exponential and Poissonian LPP models are exactly solvable, our arguments do not use integrability in any crucial way, and hence can be extended to more general settings. We prove a similar delocalization result in the lower tail large deviation result of last passage percolation on $\Z^2$ with some mild condition on the passage time distribution (see Theorem \ref{t:delocgen} for the precise conditions). We also establish an extension to higher dimensional LPP models (see Theorem \ref{t:delochd}). As far as we are aware these are the first results on the geodesic geometry in such a general setting.

Even though our results do not use integrability, the special case of Exponential LPP leads to certain simplifications which makes the basic argument more transparent. We shall therefore, to start with, restrict ourself to this case, while postponing the discussion about the other cases. We now move towards precise model definitions and statement of main result in the Exponential case.

\subsection{Model Definitions}\label{def1}
Let $\Pi=\{X_v: v \in \Z^2\}$ be a collection of i.i.d.\ Exponential random variables with rate $1$. Consider the following partial order $\preceq $ on $\Z^2$:  we say $(x_1,y_1)\preceq (x_2,y_2)$ if $x_1\leq x_2$ and $y_1\leq y_2$. For $u\preceq v$, a directed path $\gamma$ from $u$ to $v$ is defined as an up/right path starting at $u$ and ending at $v$, i.e., $\gamma:=\{u=u_0\preceq u_1 \preceq \cdots \preceq u_{k}= v\}$ is a path in $\Z^2$ where for each $i$, $u_i-u_{i-1}=(1,0)$ or $(0,1)$. For a directed path $\gamma$ as above, the passage time (or, as we shall often say, length) of $\gamma$, denoted $L(\gamma)$ is defined as $\sum_{i=0}^kX_{u_i}$.

\begin{definition}
\label{geodesic1}
For $u\preceq v\in \Z^2$, define the last passage time $L(u,v)$ from $u$ to $v$ by
$$L(u,v):=\max_{\gamma} L(\gamma)$$
where the maximum is taken over all directed paths from $u$ to $v$. The maximizing path will be called the \emph{geodesic} between $u$ and $v$ which will be denoted by $\Gamma(u,v)$\footnote{Observe that there is almost surely a unique maximizing path between any two vertices, by continuity of Exponential random variables.}.
\end{definition}

Let us now introduce some notations. For $x,y\in \Z_{+}$, we shall denote by $L_{x,y}$ the last passage time from $\mathbf{0}:=(0,0)$ to $(x,y)$. When $x=y(=n$, say) we shall simplify the notation even further and denote the last passage time by $L_n$. The geodesic from $\mathbf{0}$ to $\mathbf{n}:=(n,n)$ shall be denoted $\Gamma_{n}$.
Let $\gamma: [0,1]\to [0,1]$ denote a continuous increasing function with $\gamma(0)=0$ and $\gamma(1)=1$. We define the $(\e,n)$-cylinder neighbourhood of $\gamma$, denoted $\gamma_{n}^{\e}$ by
\begin{equation}\label{sausage100}
\gamma_{n}^{\e}=\biggl\{(x,y)\in \llbracket 0, n\rrbracket^2: |y-n\gamma(n^{-1}x)|\leq \e n\biggr\};
\end{equation}
i.e., it is a cylinder of width $\e n$ around the path from $\mathbf{0}$ to $\mathbf{n}$ that is obtained by scaling up $\gamma$. The following result is standard (see e.g.\ \cite{BSS17B} for a much stronger result).

\begin{theorem}
\label{t:diag}
Let $\mathbb{I}$ denote the identity function on $[0,1]$, and let $\e>0$ be fixed. Let $\ce_n$ denote the event that $\Gamma_n$ is contained in $\mathbb{I}_n^{\e}$. Then $\P(\ce_n)\to 1$ as $n\to \infty$.
\end{theorem}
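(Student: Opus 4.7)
The plan is to combine Rost's limit shape theorem with standard exponential-in-$n$ upper tail concentration of last passage times. The geometric intuition is that by strict concavity of the Rost shape function, a directed path forced to pass through a point $(a,b)$ far from the diagonal cannot achieve the typical length $4n$: the sum of the expected passage times from $(0,0)$ to $(a,b)$ and from $(a,b)$ to $(n,n)$ is bounded away from $4n$, so the geodesic avoids such points with high probability.

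First, I would establish a deterministic concavity estimate. Writing $f(x, y) := (\sqrt{x} + \sqrt{y})^2$ for the Rost shape function, Cauchy–Schwarz gives $\sqrt{xy} + \sqrt{(1-x)(1-y)} \leq 1$ with equality iff $x = y$, so that
\[
g(x, y) := f(x, y) + f(1-x, 1-y) \leq 4, \qquad \text{with equality iff } x = y.
\]
Continuity and compactness of $\{(x,y) \in [0,1]^2 : |x - y| \geq \e\}$ then yield $\delta = \delta(\e) > 0$ with $g(x, y) \leq 4 - 3\delta$ on this set.

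Next, I would invoke concentration: for any $u \preceq v$ in $\llbracket 0, n\rrbracket^2$,
\[
\P\bigl(L(u, v) \geq \E L(u, v) + \tfrac{\delta}{2} n\bigr) \leq e^{-c n}
\]
for a constant $c = c(\delta) > 0$, which follows from Talagrand's concentration inequality \cite{Tal94} (alternatively from Johansson's upper-tail large deviation estimate \cite{Jo99}). Combined with Rost's shape theorem, this also gives the uniform bound $\E L((0,0), (a,b)) \leq f(a/n, b/n) n + o(n)$ over all $(a,b) \in \llbracket 0, n\rrbracket^2$, and, specializing, $\P(L_n \leq (4 - \delta/2)n) \to 0$.

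The combinatorial finish runs as follows. On $\ce_n^c$ the geodesic visits some lattice point $(a,b)$ with $|a - b| > \e n$, whence $L_n \leq L((0,0), (a,b)) + L((a,b), (n,n))$. The concavity estimate together with the shape theorem gives $\E L((0,0), (a,b)) + \E L((a,b), (n,n)) \leq (4 - 2\delta)n$, and the concentration bound followed by a union bound over the $O(n^2)$ candidate points forces the actual sum to be at most $(4 - \delta)n$ outside an event of probability $O(n^2 e^{-cn}) = o(1)$. This contradicts the lower bound $L_n \geq (4 - \delta/2)n$, yielding $\P(\ce_n) \to 1$. The only subtlety I anticipate is ensuring the concentration and the shape-theorem estimate are genuinely uniform near the corners where one of $a, b$ is much smaller than $n$; there either the relevant passage time is dominated by a sum of $o(n)$ i.i.d.\ exponential weights which concentrates trivially, or one appeals to monotonicity by embedding the rectangle in a slightly larger one to which the shape theorem applies cleanly.
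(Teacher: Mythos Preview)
Your proposal is correct and follows precisely the approach the paper indicates: the paper does not actually prove Theorem~\ref{t:diag} in detail but states it as standard, citing \cite{BSS17B} and explaining in Section~\ref{pwoc} that it follows from strict concavity of Rost's limit shape together with Talagrand-type concentration \cite{Tal94}. Your write-up is a faithful and careful fleshing-out of exactly this sketch, including the Cauchy--Schwarz verification of strict concavity, the compactness argument for a uniform gap, and the union bound over $O(n^2)$ midpoints; the boundary subtlety you flag is real but minor and handled as you suggest.
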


Observe that this result asserts that the geodesic $\Gamma_n$ with high probability has Hausdorff distance $o(n)$ from the diagonal line joining $\mathbf{0}$ and $\mathbf{n}$. This is a very general result and essentially uses only the strict concavity of the limit shape. For exactly solvable models of last passage percolation, quantitatively optimal stronger variants of this result is available \cite{J00, BSS14}. See Section \ref{pwoc} for more elaboration on this.
As mentioned in the introduction, our main result in this paper shows that this behaviour changes in the lower tail large deviations regime. Formally, for $\delta\in (0,4)$ let $\cL_{\delta}$ denote the event $L_{n}\leq (4-\delta)n$. We have the following theorem.

\begin{maintheorem}
\label{t:deloc}
For each $\delta\in (0,4)$, and an increasing continuous function, $\gamma: [0,1]\to [0,1]$, with $\gamma(0)=0$ and $\gamma(1)=1$, for any $\e>0$, there exists $\e'>0,$ such that for all large enough $n,$
$$\P(\Gamma_n \subseteq \gamma_n^{\e'} \mid \cL_{\delta})\le \e.$$
\end{maintheorem}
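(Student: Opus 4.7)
My plan is to estimate the ratio $\P(A \cap \cL_\delta) / \P(\cL_\delta)$ where $A = \{\Gamma_n \subseteq \gamma_n^{\e'}\}$. Johansson's lower-tail rate function (Theorem \ref{t:ldp}) yields $\P(\cL_\delta) \ge \exp(-n^2(I(\delta) + o(1)))$ with $I(\delta) \in (0, \infty)$, controlling the denominator. The key conceptual insight, as the paper itself hints at, is the speed-$n^2$ entropic nature of the lower-tail LDP, in contrast to the upper tail's speed $n$: achieving $\cL_\delta$ requires a global modification of the environment, creating many simultaneously near-optimal paths; unlike the upper tail, where enhancing a single path suffices, no single deterministic direction is selected by the conditioning.

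For the numerator, I would decompose $L_n = \max(L^{\text{tube}}, L^{\text{out}})$ where $L^{\text{tube}} := \max\{L(\pi) : \pi \subseteq \gamma_n^{\e'}\}$ and $L^{\text{out}} := \max\{L(\pi) : \pi \not\subseteq \gamma_n^{\e'}\}$, so that $A \cap \cL_\delta = \{L^{\text{out}} \le L^{\text{tube}} \le (4-\delta)n\}$. For $\gamma$ not equal to the diagonal, the typical value of $L^{\text{tube}}$ is strictly less than $4n$ by strict concavity of the Rost limit shape, which applies because paths trapped in the tube cannot follow the optimal diagonal direction. A quantitative version of this, plus independence (up to boundary effects) between the in-tube and out-of-tube weights, should allow one to argue that the joint constraint $L^{\text{out}} \le L^{\text{tube}}$ combined with the upper bound $(4-\delta)n$ has rate function strictly exceeding $I(\delta)$, yielding the desired decay of the conditional probability.

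The principal obstacle is the case $\gamma = \mathbb{I}$ (the diagonal), where $\E[L^{\text{tube}}] \sim 4n$ and the above shape-concavity argument gives no gain. Here one must show directly that conditional on $\cL_\delta$ the geodesic undergoes macroscopic fluctuations of order $n$, in sharp contrast to the typical $n^{2/3}$. A plausible strategy is to quantify how the lower-tail rate function for the restricted LPP on a tube of width $\e' n$ compares to that on the full square, and show that tube restriction strictly increases the rate function by an amount exceeding $\log(1/\e)/n^2$; this may be achieved by iterating the tube-restriction argument at multiple scales, or by exploiting the reflection-across-diagonal symmetry together with strict convexity of the rate function. This is where the bulk of the technical work must lie, especially since the paper emphasizes avoiding integrability-based arguments and instead relying on general principles such as shape-theorem concavity and basic concentration inequalities.
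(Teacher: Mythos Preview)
Your proposal takes a fundamentally different route from the paper and, as it stands, contains a genuine gap rather than a complete alternative.

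The paper does \emph{not} compare rate functions for $A\cap\cL_\delta$ versus $\cL_\delta$. Instead it argues entirely inside the conditional measure $\P(\,\cdot\mid\cL_\delta)$ via two ingredients that are insensitive to the shape of $\gamma$: (i) a \emph{strong concentration} result (Theorem~\ref{res1}) showing that, conditionally on $\cL_\delta$, $L_n\ge (4-\delta)n-H/n$ with high probability; and (ii) an \emph{anti-concentration} result (Theorem~\ref{res2}) showing that for any deterministic set $A$ with $|A|\le \e'n^2$, the best path constrained to $A$ satisfies $L_n(A)\ge (4-\delta)n-H/n$ only with small conditional probability. Since $\{\Gamma_n\subseteq A\}\subseteq\{L_n=L_n(A)\}$, combining (i) and (ii) finishes the proof in two lines. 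Crucially, the diagonal tube is not a special case here: the argument only uses $|A|\le \e'n^2$, so $\gamma=\mathbb{I}$ is handled identically to any other curve.

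Your approach, by contrast, splits into cases according to whether $\gamma$ is diagonal, and you yourself identify the diagonal case as the principal obstacle with only a heuristic sketch (``may be achieved by iterating\ldots or by exploiting reflection symmetry together with strict convexity of the rate function''). This is where the gap lies: none of these suggestions is close to a proof, and showing that tube restriction strictly increases the lower-tail rate function is exactly the hard statement one is trying to establish. Even your non-diagonal argument is incomplete: $L^{\text{tube}}$ and $L^{\text{out}}$ are far from independent (any path not contained in the tube typically still spends most of its length inside it, so they share the bulk of the vertex weights), and the inference from ``typical $L^{\text{tube}}<4n$ by concavity'' to ``the joint event has rate strictly exceeding $I_\ell(\delta)$'' is not justified in the large-deviation regime you are working in. The paper's conditional concentration/anti-concentration mechanism sidesteps all of these difficulties, and in particular never needs the existence or strict convexity of the rate function.
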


\begin{figure}[hbt]\label{expl2}
\centering
\includegraphics[scale=.5]{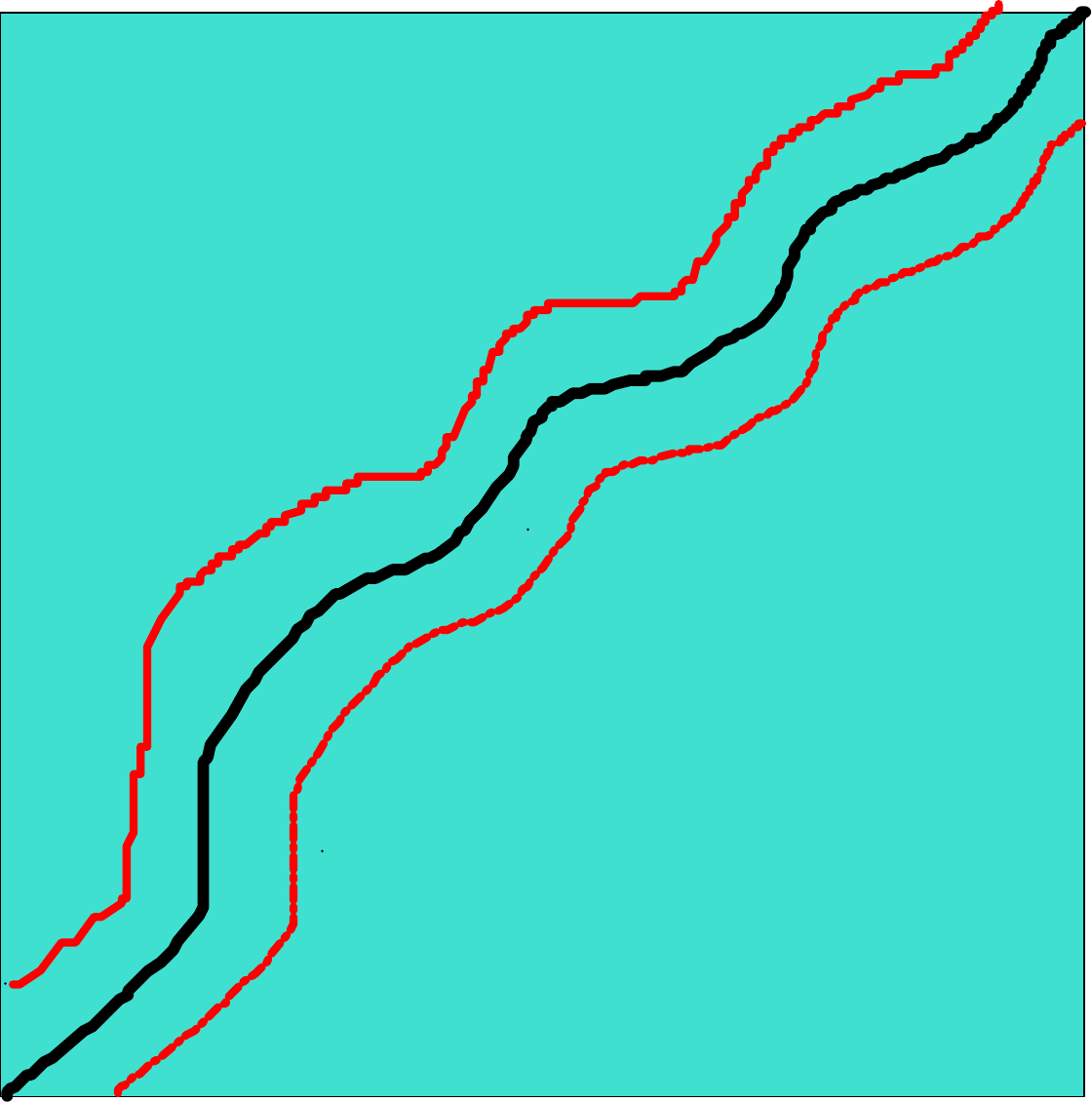}
\caption{Scale-up of a deterministic curve $\gamma$ from $(0,0)$ to $(n,n)$ and its $\e$-cylinder neighbourhood. Theorem \ref{t:deloc} says that on the lower tail large deviation event, the geodesic $\Gamma_{n}$ is unlikely to be contained in such a cylinder for any fixed curve $\gamma$.}
\end{figure}

Thus Theorem \ref{t:deloc} asserts that there does not exist any deterministic curve $\gamma,$ such that $\Gamma_n$ is localized around that curve with high probability. In the next section we continue our discussion regarding typical and rare behaviour of polymer models and variants of Theorem \ref{t:deloc}, in other related settings including a class of non-integrable models, and compare such results with existing literature.
\subsection{Background, previous works and our contributions}
\label{pwoc}

For polymer models in the KPZ universality class (last passage percolation is a general example of a zero temperature polymer model), the geometric properties of polymers (i.e., the geodesics in our context) has been an object of fundamental study. The three scaling exponents $(1,1/3,2/3)$ are characteristic of the KPZ universality class that corresponds to polymer length, length fluctuation and spatial decay of correlation. In the context of last passage percolation it can be illustrated as follows. For the geodesic from $\mathbf{0}$ to $\mathbf{n}$, the length of the geodesic is of the order $n^{1}$, the fluctuation of the length is of the order $n^{1/3}$ and the typical distance of the geodesic to the straight line joining the two points in of the order $n^{2/3}$. As mentioned before, this behaviour is expected to be universal for last passage percolation under mild condition on the passage times, but is rigorously known only for a handful of models including last passage percolation on $\Z^2$ with exponential and geometric weights, and also Poissonian LPP on  $\R^2$ (for precise definition, see Section \ref{s:poi}).

Length fluctuations of the order $n^{1/3}$ was first proved for the Poissonian LPP in the seminal work of Baik, Deift and Johansson \cite{BDJ99}, who also prove the weak convergence to GUE Tracy-Widom distribution after suitable centering and scaling. The corresponding result in Exponential last passage percolation is due to Johansson \cite{Jo99}. For completeness, let us recall the standard results in this case. Recall the last passage time $L_{x,y}$ from $(0,0)$ to $(x,y)$. The first order behaviour for $L_{x,y}$ was established in \cite{Ro81}.
\begin{theorem}
\label{t:lln}. Let $x,y>0$ be fixed real numbers. Then
$$\lim_{n\to \infty} \frac{1}{n}\E L_{\lfloor nx \rfloor, \lfloor ny \rfloor}= G(x,y)= (\sqrt{x}+\sqrt{y})^2.$$
\end{theorem}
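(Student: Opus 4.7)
The plan has two parts: establishing existence of the limit $g(x,y):=\lim_n n^{-1}\E L_{\lfloor nx\rfloor,\lfloor ny\rfloor}$ by a soft superadditivity argument, and then pinning down the explicit value $g(x,y)=(\sqrt{x}+\sqrt{y})^2$ via a boundary-perturbation computation. For existence, note that concatenating geodesics gives, for $\mathbf{0}\preceq u\preceq v$, the superadditive relation $L(\mathbf{0},v)\ge L(\mathbf{0},u)+L(u,v)-X_u$; since the correction $X_u$ is integrable and $o(n)$ relative to any macroscopic passage time, Kingman's subadditive ergodic theorem applied along the ray $t\mapsto(\lfloor tx\rfloor,\lfloor ty\rfloor)$ yields an a.s.\ and $L^1$ limit $g(x,y)\in[0,\infty)$. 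A routine extension to all rational $(x,y)$ and a concavity/homogeneity argument then present $g$ as a concave, positively $1$-homogeneous function on $\R_{\ge 0}^2$, so in particular $g$ is continuous in its arguments and the expectation version in the statement follows from $L^1$ convergence.

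To identify this limit I would deploy the boundary (``stationary'') LPP model. Fix $\rho\in(0,1)$, replace the weights on the bottom row with i.i.d.\ Exp($1-\rho$) and those on the left column with i.i.d.\ Exp($\rho$), keeping bulk weights Exp($1$), and denote the resulting last passage time to $(n,m)$ by $L^\rho_{n,m}$. A Burke-type calculation, exploiting the memoryless property of the exponential via the nearest-neighbour increment system, shows that the row- and column-increment processes of $L^\rho$ remain i.i.d.\ Exp($1-\rho$) and Exp($\rho$) respectively, yielding the closed form $\E L^\rho_{n,m}=n/(1-\rho)+m/\rho$. Since adjoining boundary weights can only increase the passage time while the extra contribution is concentrated on a single axis and has expected magnitude $O(\sqrt{n})$ (any geodesic exits the boundary rapidly when $\rho$ is chosen away from the degenerate values), one obtains
\[
\E L_{n,m}\le \frac{n}{1-\rho}+\frac{m}{\rho}+o(n).
\]
Optimizing over $\rho$, with the minimum attained at $\rho=\sqrt{m}/(\sqrt{n}+\sqrt{m})$, then gives $g(x,y)\le(\sqrt{x}+\sqrt{y})^2$.

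For the matching lower bound I would execute a Cator--Groeneboom exit-point argument in the stationary model at parameter $\rho$. Defining the signed exit point $Z$ of the stationary geodesic from the boundary axes, one derives the identity relating $\mathrm{Var}(L^\rho_{n,m})$ to $\E|Z|$, and shows that at the characteristic value $\rho=\sqrt{m}/(\sqrt{n}+\sqrt{m})$ the exit point is $o(n)$ with high probability. Hence, except for an $o(n)$ expected correction, the stationary passage time equals the bulk passage time, so $\E L_{n,m}\ge n/(1-\rho)+m/\rho-o(n)$ at this $\rho$, which matches the upper bound. The main obstacle will be the Burke-type stationarity together with the sublinear exit-point estimate; both are consequences of the memoryless property of the exponential distribution and are precisely the exactly-solvable input that lets one read off the explicit shape $(\sqrt{x}+\sqrt{y})^2$, explaining why later results in the paper that dispense with integrability must instead use Theorem~\ref{t:lln} as a black box.
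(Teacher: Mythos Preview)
The paper does not prove Theorem~\ref{t:lln}; it is quoted as a background result due to Rost~\cite{Ro81}, and is invoked only as a black box (e.g.\ in Lemma~\ref{l:strip0}). So there is no ``paper's proof'' to match. Rost's original argument proceeds via the correspondence with TASEP and a hydrodynamic-limit analysis of the particle system, which is quite different from what you outline.

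Your plan is a legitimate, modern alternative: the stationary (boundary) model with the Burke property, followed by an exit-point/variance argument, is precisely the Bal\'azs--Cator--Sepp\"al\"ainen route to the shape theorem (and more). Two comments. First, your upper bound justification is slightly garbled: you do not need any $O(\sqrt{n})$ exit claim there. For $\rho\in(0,1)$ both boundary families ${\rm Exp}(\rho)$ and ${\rm Exp}(1-\rho)$ stochastically dominate ${\rm Exp}(1)$, so by a monotone coupling $L_{n,m}\le L^{\rho}_{n,m}$ pointwise; taking expectations and optimizing in $\rho$ gives $g(x,y)\le(\sqrt{x}+\sqrt{y})^2$ directly. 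Second, the substantive work is entirely in the lower bound: showing that at the characteristic $\rho=\sqrt{m}/(\sqrt{n}+\sqrt{m})$ the exit point is $o(n)$ requires the variance--exit-point identity \emph{and} an a~priori bound showing the variance (or the expected exit displacement) is $o(n)$, which is not automatic and is essentially the content of the Cator--Groeneboom/Bal\'azs--Cator--Sepp\"al\"ainen analysis. If you are content to cite that input, your sketch closes; if not, that is the gap to fill.

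In short: correct in outline, genuinely different from Rost's hydrodynamic proof, with the caveat that the exit-point estimate you invoke is itself a nontrivial theorem rather than a routine step.
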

In particular, for $x=y=1$, this shows that $\E L_n= (4+o(1))n$. Moreover, for $x,y>0$, it is known that $n^{-1/3} \left(L_{\lfloor nx \rfloor, \lfloor ny \rfloor}-nG(x,y)\right)$ converges weakly. It is however, much easier to show (and the argument is much more general) concentration of $L_{\lfloor nx \rfloor, \lfloor ny \rfloor}$ at scale $\sqrt{n}$ (see for example, \cite{Tal94}).  Also important is the boundary of the limit shape $\{(x,y)\in \R_{+}^{2}: G(x,y)=1\}$ which can be observed to be strictly concave. This implies that the growth (of $L_{x,y}$) is fastest in the diagonal direction $(1,1)$. This, together with the above concentration result establishes that $\Gamma_{n}$ is concentrated around the straight line joining $\mathbf{0}$ and $\mathbf{n}$ as already stated in Theorem \ref{t:diag} that was established in \cite{DZ2} in the context of Poissonian LPP and its variants. Sharp exponent of transversal fluctuation (i.e., the maximum vertical distance between a geodesic from $\mathbf{0}$ to $\mathbf{n}$ and the diagonal line joining two points) was obtained in \cite{J00} which showed that the maximum transversal fluctuation in $n^{2/3+o(1)}$ with high probability in the Poissonian case. The same holds in the Exponential case; see \cite[Theorem 11.1] {BSS14} for the statement of a quantitatively sharper result.

Work on large deviations in polymer models goes back to Kesten~\cite{Kes86} in 1986 who considered large deviation problems in the related setting of first passage percolation. For $\delta>0$, let $\mathcal{U}_{\delta}=\{L_n\geq (4+\delta) n\}$ denote the upper tail event analogous to  $\cL_{\delta}$ already defined in the statement of Theorem \ref{t:deloc}. A straightforward adaptation of the argument of~\cite{Kes86} to show that the log probabilities for the upper tail event scales as $n$, whereas the log probabilities scale as $n^2$ for the lower tail. In~\cite{Jo99} a precise rate function was established:

\begin{theorem}[\cite{Jo99}]
\label{t:ldp}
There exists function $I_{u}(\delta), I_{\ell} (\delta)$ such that $I_{u}(\delta)\in (0,\infty)$ for all $\delta>0$ and $I_{\ell}(\delta)\in (0,\infty)$ for $\delta\in (0,4)$ such that
\begin{align*}
\lim_{n\to \infty} \frac{1}{n}{\log(\P(L_n\ge (4+\delta)n))}&= -I_u(\delta)\\
\lim_{n\to \infty} \frac{1}{n^2}{\log(\P(L_n\le (4-\delta)n))}&= -I_\ell(\delta).
\end{align*}
\end{theorem}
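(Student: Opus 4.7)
The plan is to leverage the exact solvability of exponential LPP through the Robinson--Schensted--Knuth correspondence, which identifies $L_n$ in distribution with the largest eigenvalue $\lambda_{(n)}$ of an $n \times n$ matrix from the Laguerre Unitary Ensemble. Concretely, the ordered eigenvalues $0 \le \lambda_{(1)} \le \cdots \le \lambda_{(n)}$ have joint density proportional to $\prod_{i<j}(\lambda_i - \lambda_j)^2 \prod_i e^{-\lambda_i}$ on $[0,\infty)^n$. Under the rescaling $\lambda_i = n\mu_i$, the empirical measure $\hat\mu_n = \frac{1}{n}\sum_{i=1}^n \delta_{\mu_i}$ concentrates on the Marchenko--Pastur law $\mu_{\textup{MP}}$ supported on $[0,4]$, whose right edge matches $L_n/n \to 4$ (Theorem \ref{t:lln}). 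The two tails correspond to qualitatively different deformations of this equilibrium picture: detaching a single eigenvalue (upper) versus squeezing the entire bulk below $4-\delta$ (lower), which is the source of the disparate speeds $n$ and $n^2$.

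For the upper tail, I would write $\P(L_n \ge (4+\delta)n) = \P(\lambda_{(n)} \ge (4+\delta)n)$ and use the union bound $\le n \int_{(4+\delta)n}^\infty \rho_n^{(1)}(x)\,dx$, where $\rho_n^{(1)}$ is the one-point correlation function expressible through the Christoffel--Darboux kernel of the Laguerre orthogonal polynomials. Plancherel--Rotach asymptotics outside the spectral support yield $\rho_n^{(1)}(nx) = \exp\bigl(-n\, J_u(x) + o(n)\bigr)$ for $x > 4$, with
\[
J_u(x) = x - 2\!\int \log|x-y|\,d\mu_{\textup{MP}}(y) - c_0,
\]
the effective potential from the Marchenko--Pastur equilibrium problem; integrating produces $\limsup_n \frac{1}{n}\log \P(L_n \ge (4+\delta)n) \le -I_u(\delta)$ with $I_u(\delta) := J_u(4+\delta)$. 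The matching lower bound follows from a tilt/insertion argument: condition one eigenvalue to sit near $(4+\delta)n$ and show that the remaining $n-1$ eigenvalues still form a typical LUE configuration, contributing at most a polynomial correction at speed $n$.

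For the lower tail, after rescaling the probability reads
\[
\P(L_n \le (4-\delta)n) = \frac{1}{Z_n}\int_{[0,4-\delta]^n} \exp\!\left(-n^2\!\left[\tfrac{1}{n^2}\sum_{i\ne j}\log\tfrac{1}{|\mu_i-\mu_j|} + \tfrac{1}{n}\sum_i \mu_i\right]\right) d\mu,
\]
and a Hiai--Petz / Ben Arous--Guionnet style LDP at speed $n^2$ for the empirical measure, governed by the energy functional
\[
\Phi(\mu) = \int\!\!\int \log\tfrac{1}{|x-y|}\,d\mu(x)\,d\mu(y) + \int x\,d\mu(x),
\]
gives $\lim_n n^{-2}\log \P(L_n \le (4-\delta)n) = -I_\ell(\delta)$ with $I_\ell(\delta) = \Phi(\mu^*_\delta) - \Phi(\mu_{\textup{MP}})$, where $\mu^*_\delta$ is the unique minimizer of $\Phi$ over probability measures supported in $[0, 4-\delta]$. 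This constrained minimizer admits an explicit Euler--Lagrange characterization as a squeezed Marchenko--Pastur-type law, and strict convexity of $\Phi$ together with the fact that $\mu_{\textup{MP}}$ itself is excluded from the constraint set forces $I_\ell(\delta) \in (0,\infty)$ for $\delta \in (0,4)$.

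The principal technical obstacle is making the Coulomb-gas/Laplace-method passage rigorous: one must exchange the $n \to \infty$ limit with the logarithmic singularity of $\Phi$, control the normalizing constant $Z_n$ asymptotically through the Selberg--Laguerre integral, and handle the soft edge at $0$ where $\mu_{\textup{MP}}$ meets the boundary of its support (so the constrained minimization does not pick up extra boundary contributions). For the upper tail, the corresponding subtlety is uniform Plancherel--Rotach asymptotics of the Laguerre kernel on the unbounded interval $[(4+\delta)n, \infty)$, with tail bounds strong enough to survive integration without inflating the exponent.
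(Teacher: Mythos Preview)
This theorem is not proved in the present paper; it is quoted as background from Johansson~\cite{Jo99}, so there is no ``paper's own proof'' to compare against. The only commentary the paper offers is the sentence following the statement: the upper tail rate function exists by a \emph{subadditive} argument together with concentration estimates, while the lower tail genuinely relies on the exact determinantal structure.

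Your sketch is a faithful outline of the Johansson/random-matrix route and is essentially correct in its architecture. The RSK identification of $L_n$ with the top eigenvalue of the $(n{+}1)\times(n{+}1)$ LUE is exactly how \cite{Jo99} proceeds, and for the lower tail the Coulomb-gas LDP at speed $n^2$ (Ben~Arous--Guionnet / Hiai--Petz) with the constrained Marchenko--Pastur minimizer is the standard way to extract $I_\ell$. The obstacles you flag (logarithmic singularity in $\Phi$, control of $Z_n$, hard edge at $0$) are the real ones and are handled in that literature.

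For the upper tail, note that the paper points to a more elementary path than yours: superadditivity of $\log \P(L_n\ge (4+\delta)n)$ (concatenating paths) gives existence of the limit directly, and nontriviality follows from a crude exponential moment bound, with no need for Plancherel--Rotach asymptotics or the one-point kernel. Your approach would also work and has the advantage of yielding an explicit formula for $I_u$, but it is heavier machinery than what is strictly required for the statement as written.
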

Existence of the rate function for the upper tail follows from a standard sub-additive argument, and using appropriate concentration estimates one can show non-triviality of the rate function. However the proof of the lower tail result depends on the exact determinantal formulae for this specific model. Johansson provides an explicit formula for the upper tail rate function, whereas the lower tail rate function was not evaluated explicitly.

The analogue of Theorem \ref{t:ldp} was proved for Poissonian LPP in \cite{DZ1, sepLDP} using connections to longest increasing subsequences for permutations and the RSK correspondence to Young Tableaux. In~\cite{DZ1}, the geometry of geodesics in the large deviation regime has been investigated in this setting. Conditioned on the upper tail large deviation event, it was shown that the geodesics remain localized around the diagonal. Their method of proof can be adapted to our setting of the Exponential LPP in a straightforward manner to show that in the notation of Theorem \ref{t:diag}, except for an event of exponentially (in $n$) small conditional probability given $\cU_{\delta},$ we have $\Gamma_{n}\subseteq \mathbb{I}_n^{\e}$.

The analysis of the lower tail rate function in \cite{DZ1} involves solving a variational problem for shapes of Young Tableaux and did not provide any geometric information about the optimal path. In particular, whether or not the path is localized around a deterministic curve conditioned on the lower tail large deviation event was mentioned as Open Problem 2 in \cite{DZ1}. Our arguments proving Theorem \ref{t:deloc} can be adapted in this setting too to show that conditioned on the lower tail large deviation event the path is not localized around any deterministic curve.  However, a formal statement of the result in this setting needs new notation and is postponed to Section \ref{s:poi} (see Theorem \ref{t:delocLPP} there), and for the moment we only present the following informal statement.

\begin{customthm}{2}[Informal]
\label{informal1}
Fixing $\delta$, for any increasing $\gamma: [0,1]\to [0,1]$ with $\gamma(0)=0$ and $\gamma(1)=1$, there exist $\e>0$, such that
$$\P(\ce_{\gamma, n} \mid \cL_{\delta})\to 1$$
as $n\to \infty$,
where  $\ce_{\gamma, n}$ denotes the event that there exists a geodesic $\Gamma_{n}$ between $\mathbf{0}$ and $\mathbf{n}$ that is not contained in $\gamma_{n}^{\e}$ (the sausage of width $\e n$ around a properly scaled version of $\gamma$).
\end{customthm}

Note the subtle qualitative difference between Theorems \ref{t:deloc} and \ref{informal1}. In the former, by continuity of the  exponential variables, the geodesic $\Gamma_n$ is well defined, whereas for the latter, due to the discrete nature of the setting, there are several geodesics between points $\mathbf{0}$ and $\mathbf{n}.$ Hence whereas the former says that conditioned on $\cL_{\delta},$ with high probability the geodesic does not lie within a narrow sausage around any deterministic function, the latter theorem shows existence of several geodesics outside such a sausage, while not precluding existence of some geodesic within the sausage. We elaborate more on this in Sec \ref{s:poi}.
Thus Theorems~\ref{t:deloc} and~\ref{t:delocLPP} exhibit a transition from localization to delocalization in going from the upper tail to the lower tail of the large deviation regime.

Finally we emphasize that even though the above mentioned theorems are in the integrable setting of Exponential LPP or Poissonian LPP on $\Z^2$ and $\R^2$ respectively, our argument actually is far more general. In Section \ref{s:general} we consider LPP on $\Z^2$ with general passage times which makes the model lose its integrable structure. Under some smoothness conditions, e.g., monotonicity or log-concavity, on the density of the passage time distribution, we prove a delocalization analogous to Theorem \ref{t:deloc} (see Theorem \ref{t:delocgen} for a precise statement.) In Section \ref{highdim}, we establish similar phenomenon in high dimensional LPP models that are not integrable; (see Theorem \ref{t:delochd}.) We believe that our arguments should be generalizable to show delocalization in other related non-integrable settings such as first passage percolation on $\Z^d$ (the relevant large deviation event there is the upper tail event), but we do not explore the latter in this article.

While on the topic of non-integrable models, we should mention that the absence  of exact formulas present significant challenges towards just showing the existence of a large deviation rate function analogous to Theorem \ref{t:ldp}. Beyond the exactly solvable regime, such a result was so far only known in a single setting with special geometric constraint \cite{CZ}. In a forthcoming article \cite{BGS17B}, we prove the existence of the rate function in the context of general first and last passage percolation models. However, our results in this paper do not require the existence of the rate function and only relies on the fact that the speed of decay for the lower tail large deviation probability is $n^2$.

\subsection{Outline of the Proof}
\label{s:outline}

We present  the key ideas in the proof of Theorem \ref{t:deloc} in this subsection.
We start with the observation that the  speed $n^2$ for the log-probability of $\cL_{\delta}$ in Theorem \ref{t:ldp} follows by noting that given $\delta$ there exists $C>0,$ such that there are $\Theta(n)$ many disjoint translates of the strip of width $C$ around the main diagonal joining $\mathbf{0}$ and $\mathbf{n}$ such that the maximum weight increasing path in each of these strips  typically has  length at least $(4-\delta)n$. This implies that to achieve the event $\cL_{\delta}$, one needs all of those paths to have values which is smaller than typical. This forces $\Theta(n^2)$ many vertices to have weights smaller than typical and hence such an event would be exponential in $n^2$  unlikely.  This is the only aspect of  the large deviation event that our proof relies on (hence allows us to prove theorems in the non-integrable setting as well). All of the above is made precise in the proof of Proposition \ref{FKG} where we show that while
in the typical environment we have by law of large numbers $\sum_{\mathbf{0}\preceq v \preceq \mathbf{n}} X_{v}=n^2+O(n)$ with high probability, in fact
conditioned on $\cL_{\delta}$, with high probability
\begin{equation}\label{macrochange}
\sum_{\mathbf{0}\preceq v\preceq \mathbf{n}}X_v \le (1-c)n^2
\end{equation}
for some $c=c(\delta)>0.$ Using the above the proof of Theorem \ref{t:deloc} has broadly two parts:
\begin{itemize}
\item Strong concentration of the length of conditional geodesic: First we show that conditional on $\{L_{n}\leq (4-\delta)n\}$, with high probability $L_n$ is concentrated around $(4-\delta)n$ at scale $\frac{1}{n}$. (We elaborate why this is true in Section \ref{supcon12})
\item Anti-concentration of constrained paths in the conditional environment:
We shall show that, conditional on $\cL_{\delta}$, the best path that is restricted to stay within a deterministic set of vertices of size $\e n^2)$, is  concentrated around $(4-\delta)n$ at scale $\frac{1}{n}$, only with probability that decays to zero with $\e.$
\end{itemize}
Combining the above, Theorem \ref{t:deloc} follows easily. We now elaborate further on the above points.

\begin{figure}[h]
\centering
\begin{tabular}{cc}
\includegraphics[scale=.55]{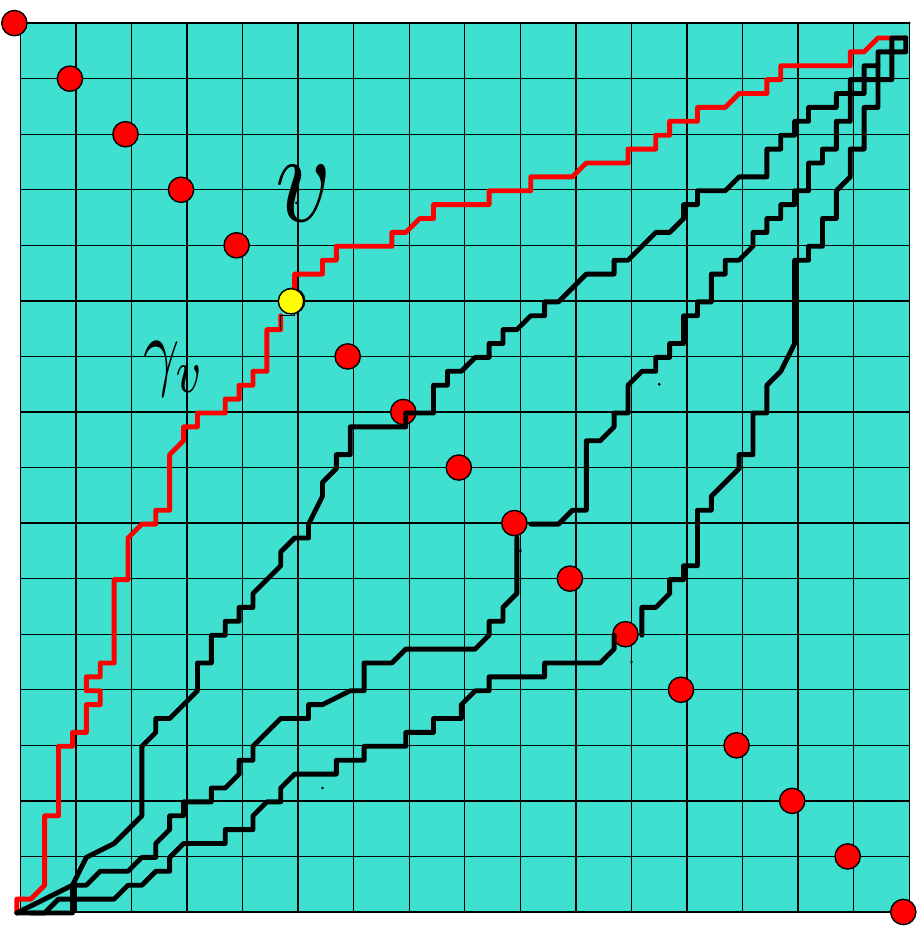} &\quad\quad\quad\quad\quad\quad\includegraphics[width=0.31\textwidth]{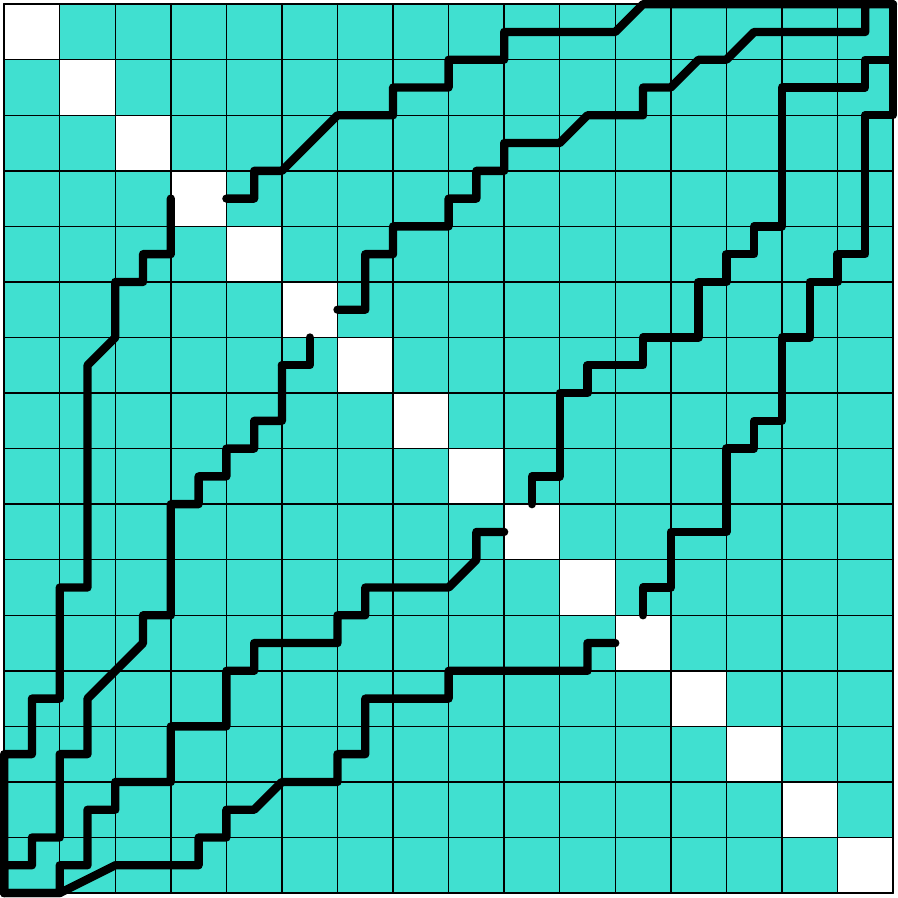} \\
(a) & \quad\quad\quad\quad\quad\quad(b)
\end{tabular}
\caption{(a) To prove super-concentration, we expose the environment except for one anti-diagonal $D$. The conditional law of $X_v$ for $v\in D,$ given the rest of the the vertices, is an Exponential random variable conditioned to be less than $M_v$ where $M_v=(4-\delta)n-|\gamma_v|$ where $\gamma_v$ is the best path passing through $v$.  Moreover $X_v$ for $v\in D$ are independent of each other. Thus $(4-\delta)n-L_n\le \inf_{v\in D} M_v-X_v$ and the latter quantity can be seen to be $O(1/n)$ if $M_v<M$ for a fixed constant $M$ for $\Theta(n)$ many $v$'s. (b) The same proof strategy works for Poissonian LPP; however in this case we expose the point process everywhere except inside small boxes along the anti-diagonal. }
\label{fig1}
\end{figure}

\subsubsection{Strong concentration of conditional geodesic}\label{supcon12}
We have the following theorem.

\begin{theorem}
\label{res1}
Fix any $\delta\in (0,4)$. Given any $\e>0$ there exists $H>0$ such that
$$\P\left(L_n\ge (4-\delta)n-\frac{H}{n}\mid \cL_\delta\right) \ge 1-\e.$$
\end{theorem}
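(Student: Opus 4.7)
The plan is to implement the anti-diagonal exposure argument sketched in Figure~\ref{fig1}(a). Fix the principal anti-diagonal $D=\{(i,n-i):0\le i\le n\}$. For each $v\in D$ set $B_v:=L(\mathbf{0},v)+L(v,\mathbf{n})-2X_v$, so that $B_v+X_v$ equals the weight of the best up-right path from $\mathbf{0}$ to $\mathbf{n}$ passing through $v$. Since every such path crosses $D$ in exactly one vertex, $L_n=\max_{v\in D}(B_v+X_v)$. The two summands defining $B_v$ are the passage times from $\mathbf{0}$ to $v$ and from $v$ to $\mathbf{n}$ with the endpoint contribution $X_v$ subtracted, so $B_v$ does not involve $X_v$; moreover, distinct elements of $D$ are $\preceq$-incomparable, so $(B_v)_{v\in D}$ is measurable with respect to $\cF_{D^c}:=\sigma(X_w:w\notin D)$, and given $\cF_{D^c}$ the weights $\{X_v:v\in D\}$ remain i.i.d.\ Exp$(1)$. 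Writing $M_v:=(4-\delta)n-B_v$, the event $\cL_\delta$ becomes the product constraint $\{X_v\le M_v\ \forall\,v\in D\}$, and $(4-\delta)n-L_n=\min_{v\in D}(M_v-X_v)$.

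Conditional on $(\cF_{D^c},\cL_\delta)$ the $X_v$'s are independent Exp$(1)$'s truncated to $[0,M_v]$, which yields
\[
\P\!\left(L_n\le(4-\delta)n-\tfrac{H}{n}\,\Big|\,\cF_{D^c},\cL_\delta\right)=\prod_{v\in D}\!\left(1-\frac{e^{H/n}-1}{e^{M_v}-1}\right)_{+}\le\exp\!\left(-\tfrac{H}{n}\,S(\cF_{D^c})\right),
\]
with $S:=\sum_{v\in D}\tfrac{1}{e^{M_v}-1}$, using $\log(1-x)\le -x$ on each positive factor together with $e^{H/n}-1\ge H/n$. Splitting over the event $\{S\ge cn\}$ therefore gives
\[
\P\!\left(L_n\le(4-\delta)n-\tfrac{H}{n}\,\Big|\,\cL_\delta\right)\le e^{-cH}+\P(S<cn\mid\cL_\delta),
\]
so for any fixed $c=c(\delta)>0$ the first term is $<\e/2$ once $H$ is chosen large, and the theorem reduces to showing $\P(S\ge cn\mid\cL_\delta)\ge 1-\e/2$. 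This in turn is implied by the assertion that at least $c'n$ many $v\in D$ have $M_v$ bounded above by a fixed constant $M=M(\delta,\e)$.

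This last estimate is what I expect to be the main obstacle. Heuristically, the unconditional mean of $B_v$, namely $2n+4\sqrt{i(n-i)}-2$ for $v=(i,n-i)$, strictly exceeds $(4-\delta)n$ on a $\Theta(n)$-sized central band $I\subset D$ (the entire $D$ when $\delta\ge 2$). Since $\cL_\delta$ forces $B_v\le (4-\delta)n$ for all $v\in D$, and any further drop of $B_v$ by an additive constant $M$ across many $v\in I$ simultaneously would be an additional quadratic-speed (in $n^2$) deviation on top of the already-rare event $\cL_\delta$, one expects the conditional law to pin $B_v$ close to the upper boundary $(4-\delta)n$ for most $v\in I$. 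To formalize this, I would combine the macroscopic suppression estimate~\eqref{macrochange} (from Proposition~\ref{FKG}) with a local environment-modification argument bounding each marginal $\P(M_v>M\mid\cL_\delta)$ by some $\alpha(M)\to 0$ as $M\to\infty$, and then aggregate via a first-moment bound on $|\{v\in I:M_v>M\}|$ under the conditional measure. The delicate point is the joint positive correlation of the events $\{M_v>M\}$ (all being decreasing functions of the environment $X$) under $\P(\cdot\mid\cL_\delta)$, which must be controlled to rule out a scenario in which many $v$'s simultaneously fail the bound.
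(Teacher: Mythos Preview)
Your reduction is sound and matches the paper's Lemma~\ref{super}: conditioning on $\cF_{D^c}$ turns the $X_v$'s on the anti-diagonal into independent truncated exponentials, and the product bound correctly reduces the theorem to the claim that, conditionally on $\cL_\delta$, a linear number of $v$ have $M_v\le M$ for some fixed $M$. The gap is that you do not prove this claim, and the route you sketch---bounding each marginal $\P(M_v>M\mid\cL_\delta)$ and then using a first-moment estimate---does not obviously close. There is no evident mechanism forcing any \emph{fixed} anti-diagonal to have many bounded $M_v$ with conditional probability $1-\e/2$; the events $\{M_v>M\}$ are decreasing in the environment, so FKG under $\cL_\delta$ points the wrong way, and a first-moment bound at best gives positive probability rather than probability close to one.

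The paper avoids this by \emph{not} fixing a single anti-diagonal. It considers all anti-diagonals $D_i$, $i\in I$, and argues as follows. Proposition~\ref{FKG} (your equation~\eqref{macrochange}) forces $\sum_{v\in D_i}X_v\le(1-2\e)|D_i|$ (event $\cB_i$) for $\Theta(n)$ many $i$'s with conditional probability $1-e^{-cn}$. Separately, Lemma~\ref{l:trunc} says that if most $M_v$ on $D_i$ exceed $M$, then the truncation is nearly ineffective and $\cB_i$ is exponentially unlikely; hence $\cB_i\cap\cM_i^c$ has conditional probability $\le e^{-cn}$ (Lemma~\ref{l:bmest}), where $\cM_i$ is your ``many bounded $M_v$'' event on $D_i$. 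This still does not pin down one good $i$, because $\P(\cB_i\mid\cL_\delta)$ need not be close to $1$ for any particular $i$. The paper therefore runs an iterative selection (Lemmas~\ref{l:goodb} and~\ref{l:conditional}): extract an ordered subset $J\subset I$ on which $\P(\cB_{i_j}\mid\cB_{i_1}^c,\dots,\cB_{i_{j-1}}^c,\cL_\delta)\ge\e/8$ and $\P(\cup_J\cB_j\mid\cL_\delta)\ge 1-\e/2$, and then show $\P(\mathcal{C}_H\mid\cB_{i_j},\cB_{i_1}^c,\dots,\cB_{i_{j-1}}^c,\cL_\delta)\ge 1-\e/2$ for each $j$ by combining Lemma~\ref{super} with the $\cB_i$-to-$\cM_i$ link. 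Summing the resulting disjoint decomposition gives the theorem. In short, the missing idea is to trade the single anti-diagonal for a family of them and to use the macroscopic suppression of total weight as the input that guarantees \emph{some} anti-diagonal in the family will work, rather than trying to control marginals on a fixed one.
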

To see why this should be true, note that for any anti-diagonal $D$ i.e., a set of the form $\{(x,y):\, x+y=k\},$ (see Figure \ref{fig1} where the anti-diagonal for $k=n$ is highlighted) the typical value of $\sum_{v\in D}X_v$ is $|D|+O(\sqrt{|D|}),$ since exponential variables have mean one and the sum is well concentrated.
However \eqref{macrochange} implies that, conditional on $\cL_{\delta}$, typically
there exists $\Theta(n)$ many anti-diagonals $D$ (each with size linear in $n$) such that $\sum_{v\in D} X_v \le (1-c)|D|$ for some $c=c(\delta)>0$.
At this point we use the following observation: fixing any anti-diagonal $D$, if we condition on the weights of the remaining vertices $\{X_v: v\notin D\}$ (the green region in Figure \ref{fig1} a.), and the event $\cL_{\delta},$ then  conditionally, $\{X_v : v \in D\}$ (the weights on $D$) is nothing but a collection of independent variables where $X_v$ follows the law of standard exponential variable conditioned to be less than $R_v$ which is a deterministic function of the weights of the vertices in the green region.
More precisely, $R_v=(4-\delta)n-M_v$ where
\begin{equation}\label{opt234}
M_v=\max_{v\in \gamma}|\gamma|-X_v,
\end{equation} where the maximum is taken over all directed paths $\gamma$ from $\mathbf{0}$ to $\mathbf{n}$ that pass through $v.$ Note that any directed path from $\mathbf{0}$ to $\mathbf{n}$ intersects any anti-diagonal  at exactly one point. Thus $M_v$ (and hence $R_{v}$) is indeed a deterministic function of all the variables $\{X_v:v\notin D\}.$ At this point we shall conclude that for anti-diagonals $D$ such that $\sum_{v\in D}X_v \le (1-c)|D|,$ with high probability $R_v$ must be uniformly bounded by some $M$ for at least $\Theta (n)$ many vertices in $D$.

It might be useful to think about this in the following way. For each $v$ on an anti-diagonal, the event $\cL_{\delta}$ forces a `barrier' $R_v$, and $X_{v}$ is an Exponential random variable conditioned not to exceed this barrier. Clearly the $v$ such that $X_v$ gets closest to its barrier $R_v$ will be the unique point on the anti-diagonal that the maximum length path passes through and consequently $(4-\delta)n-L_n$ is $R_v-X_v$. Now let us pretend that all the barriers are $M$ (this is not true but the calculation is qualitatively the same as long as $R_{v}\leq M$ for a linear number of $v\in D$), and the proof is complete by observing that $M-\max_{1\leq i\leq n} Y_{i}=O(\frac{1}{n})$ with high probability where $Y_{i}$ are i.i.d.\ $\mbox{Exp}(1)$ variables conditioned to be at most $M$.

\subsubsection{Anti-concentration of restricted paths in the conditional environment}
To make a formal statement we need the following notations.  For any $A\subset \Z^2,$  let $\Gamma_n(A)$ (resp.\ $L_n(A)$) be the longest directed path (resp.\ length) which lies entirely in $A$.   We then have the following theorem.

\begin{theorem}
\label{res2}
Fix any $\delta>0.$ Then given any $H$ and $\e>0$ there exists $\e'>0$ such that for every deterministic set $A\subseteq \llbracket 0,n \rrbracket^2$, with $|A|\leq \e' n^2$ we have
$$\P\left(L_n(A) \ge (4-\delta)n -\frac{H}{n}\mid \cL_{\delta}\right)\le \e.$$
\end{theorem}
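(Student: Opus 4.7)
The plan is to adapt the anti-diagonal conditioning scheme of Theorem~\ref{res1} to the constrained setting by locating an anti-diagonal whose intersection with $A$ is small. Since $\sum_D |D\cap A|=|A|\le \e' n^2$ and there are $\Theta(n)$ anti-diagonals of length $\Theta(n)$, a Markov-type argument supplies a linear number of anti-diagonals $D$ with $|D\cap A|\le C\sqrt{\e'}\,n$. Combining with the ``good barrier'' structure from the proof of Theorem~\ref{res1} --- namely, that conditional on $\cL_\delta$ with high probability a positive fraction of anti-diagonals $D$ have a linear number of vertices $v\in D$ with $R_v:=(4-\delta)n-M_v$ lying in a fixed compact interval $[c_1,c_2]$ --- a pigeonhole argument produces an anti-diagonal $D$ enjoying both properties.

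Fix such a $D$ and let $\cF_D:=\sigma(\{X_u:u\notin D\})$. Conditional on $\cF_D$, the event $\cL_\delta$ is exactly $\{X_v\le R_v\ \forall\, v\in D\}$, and $\{X_v:v\in D\}$ are independent Exp$(1)$ variables truncated to $[0,R_v]$. Since every directed path from $\mathbf 0$ to $\mathbf n$ crosses $D$ at a unique vertex, we have
$$L_n(A)\,=\,\max_{v\in D\cap A}\bigl(X_v+M_v^A\bigr),$$
where $M_v^A$ is the $\cF_D$-measurable analog of $M_v$ for paths contained in $A$; in particular $R_v^A:=(4-\delta)n-M_v^A\ge R_v$. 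The event $\{L_n(A)\ge (4-\delta)n-H/n\}$ thus becomes $\{\exists v\in D\cap A:\,X_v\ge R_v^A-H/n\}$.

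For each $v\in D\cap A$ with $R_v\in[c_1,c_2]$, the truncated Exponential density on $[0,R_v]$ is bounded by $1/(1-e^{-c_1})$ and the interval $[R_v^A-H/n,R_v)$ has length at most $H/n$, giving
$$\P\bigl(X_v\ge R_v^A-H/n\,\bigm|\,X_v<R_v\bigr)\,\le\,\frac{CH}{n}.$$
A union bound over $v\in D\cap A$ then yields
$$\P\bigl(L_n(A)\ge (4-\delta)n-H/n\,\bigm|\,\cF_D,\cL_\delta\bigr)\,\le\,|D\cap A|\cdot\frac{CH}{n}\,\le\,C'\sqrt{\e'}\,H,$$
which is at most $\e$ once $\e'\le(\e/(C'H))^2$. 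Averaging over $\cF_D$ conditional on $\cL_\delta$ completes the argument.

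The main obstacle is the per-vertex estimate for $v\in D\cap A$ with $R_v^A\le H/n$: there $X_v\ge R_v^A-H/n$ is automatic and the bound above fails. Such vertices correspond to points where an $A$-path through $v$ is already within $H/n$ of maximal length --- essentially, vertices on or very near the geodesic, which contributes exactly one point per anti-diagonal. The cleanest resolution is to invoke the super-concentration statement of Theorem~\ref{res1}: the bound $(4-\delta)n-L_n=O(1/n)$ under $\cL_\delta$ localizes pathologically small barriers to an asymptotically negligible neighbourhood of the geodesic, allowing these vertices to be excluded from the union bound without affecting the final $O(\sqrt{\e'}H)$ estimate.
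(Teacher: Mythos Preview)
Your approach is genuinely different from the paper's and, as written, has a real gap in exactly the place you flag as ``the main obstacle''.

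The paper does not use anti-diagonal conditioning for this theorem at all. Instead it exploits the Exponential-specific decoupling in Fact~\ref{fact1}: writing $Z_A=\sum_{v\in A}X_v$ and $\mathbf{Y}_A=(X_v/Z_A)_{v\in A}$, the pair $(\mathbf{Y}_A,Z_A)$ is independent with $Z_A\sim\mathrm{Gamma}(|A|)$. Conditioning on $\mathbf{Y}_A$, $\mathbf{X}_{A^c}$ and $\cL_\delta$ reduces $Z_A$ to a $\mathrm{Gamma}(|A|)$ truncated at some $\theta_{\max}$, and since $L_n(A)=Z_A\,L_n(A;\mathbf{Y}_A)$ scales linearly in $Z_A$, the event $\{L_n(A)\ge (4-\delta)n-H/n\}$ becomes $\{Z_A\ge s\theta_{\max}\}$ with $s=1-O(1/n^2)$. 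A direct integral then gives the bound $O(|A|/n^2)=O(\e')$. The point is that the whole configuration on $A$ is rescaled simultaneously, so no vertex-by-vertex control is needed.

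In your scheme, the union bound term for $v\in D\cap A$ is
\[
\P\bigl(X_v\ge R_v^A-\tfrac{H}{n}\,\bigm|\, X_v<R_v\bigr)\ \le\ \frac{H/n}{1-e^{-R_v}},
\]
which is $O(1/n)$ only when $R_v$ is bounded below by a fixed constant. The proof of Theorem~\ref{res1} supplies only an \emph{upper} bound $R_v\le M$ on a linear fraction of $D$; it gives no lower bound $R_v\ge c_1$, so your interval $[c_1,c_2]$ is not actually furnished by that argument. More seriously, the problematic vertices you isolate --- those $v\in D\cap A$ with $R_v^A\le H/n$ --- cannot be dismissed by invoking Theorem~\ref{res1}. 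Indeed $R_v^A\le H/n$ means $M_v^A\ge (4-\delta)n-H/n$, and since $L_n(A)\ge M_v^A+X_v\ge M_v^A$, the mere existence of one such vertex already forces $L_n(A)\ge (4-\delta)n-H/n$. Thus ruling out such vertices \emph{is} the event you are trying to bound; appealing to the concentration of $L_n$ (which concerns unrestricted paths) says nothing about whether $L_n(A)$ is already large at the level of $\cF_D$. The argument is circular at this step, and I do not see a way to close it within the anti-diagonal framework without essentially reproving the theorem by other means.
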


Let us attempt to give an informal reasoning for the above theorem. Clearly $L_n(A)$ only depends on $\{X_{v}:v\in A\}$. We can decompose this data into two parts: $S=\sum_{v\in A}X_v$ and $\{\frac{X_{v}}{S}: v\in A\}$ which are independent by the properties of Exponential random variables. It is well known that $S$ is distributed as a $\mbox{Gamma}(|A|)$ random variable. Now conditioned on  $\{X_{v}/S: v\in A\}$ and the field outside $A$ and $\cL_{\delta}$ (obviously all the pieces of the data have to be compatible with $\cL_{\delta}$ for this to make sense), by similar arguments as in Sec \ref{supcon12}, the conditional distribution of $S$ is the distribution of a $\mbox{Gamma}(|A|)$ variable conditioned to be less than some value $S_0$ where $S_0$ is a measurable function of the sigma field being conditioned on and is $\Theta(n^2)$ for most of the realizations of the data. It then follows that the typical value of $S$ under the above conditioning is around $(1-\frac{1}{|A|})M.$ As argued before as well, $(4-\delta)-L_{n}(A)$ is governed by how close $S$ is to $M$; and in particular the event $L_n(A) \ge (4-\delta)n -\frac{H}{n}$ would require $S>(1-\frac{C}{n^2})M,$ for some $C$ which just depends on $\delta$ and $H$. Thus taking $A\le \e' n^2$ for some small enough $\e'$ will complete the proof.

The main work of this paper goes into proving Theorem \ref{res1} and Theorem \ref{res2}. We shall, however, first provide the immediate proof of Theorem \ref{t:deloc} using these.

\begin{proof}[Proof of Theorem \ref{t:deloc}]
Fix $\delta\in (0,4)$ and $\e>0$. Let $H>0$ be such that the conclusion of Theorem \ref{res1} holds for this choice of $\delta$ and $\e$, and let $\e'$ be such that the conclusion of Theorem \ref{res2} holds for this choice of $\delta, \e$ and $H$.Observe that the set $\gamma_{n}^{\e'}$ (see \eqref{sausage100}) has size $\e' n^2.$ Thus choosing the set $A$ in Theorem \ref{res2} to be $\gamma_n^{\e'}$, it follows that
\begin{align*}
&\P(\Gamma_n \subseteq A \mid \cL_{\delta})=\P(\Gamma_n= \Gamma_n (A) \mid \cL_{\delta})\\
&\le \P(L_n\le (4-\delta)n-\frac{H}{n}\mid \cL_{\delta})+\P(L_n(A)\ge (4-\delta)n-\frac{H}{n}\mid \cL_{\delta}) \\
& \le \e+\e.
\end{align*}\end{proof}

We remark that the choice of the definition of the $(\e,n)$
cylinder in \eqref{sausage100} is not canonical and similarly one can also take the paths $\gamma$ in the statement of Theorem \ref{t:deloc}, to be directed paths instead of graphs of honest functions. Minor variants of the above arguments would yield the same result in such situations and we omit the details for brevity.

\subsubsection{Generalization to other settings:}
\begin{enumerate}
\item
In the case of Poissonian LPP,  due to the discrete nature of the problem we have extreme concentration, i.e., with high probability the length of the geodesic equals the largest integer smaller than the barrier value of  $(2-\delta)n$ forced by the event $\cL_{\delta}.$ This is achieved using the same strategy as above, however instead of anti-diagonals of vertices, we now analyze anti-diagonals formed by small boxes (see Figure \ref{fig1} (b)).
\item
A close inspection at the above proof technique for anti-concentration, reveals that the reliance on the properties of Gamma distribution is not crucial. In fact in the proof of Theorem \ref{t:delocgen}, we prove anti-concentration under pretty general distributional assumptions where $S$ does not follow a nice distribution.  Note however that the above anti-concentration result is only expected if the distribution of $S$ has some smoothness. This is the reason why in the setting of Poissonian LPP we do not expect such a result to hold leading to a difference in the nature of Theorems \ref{t:deloc} and \ref{informal1} (see the discussion following the statement of the latter).
\end{enumerate}

\subsection{Organization of the paper}
The rest of this paper is organized as follows. In Section \ref{s:sc} we prove Theorem \ref{res1}. We prove the anti-concentration estimate Theorem \ref{res2} in Section \ref{s:ac}. The remaining sections are devoted to various extensions and generalizations. In Section \ref{s:poi}, we prove Theorem \ref{informal1} and answer an open question of \cite{DZ1}. In section \ref{s:general}, we prove delocalization for general last passage percolation models on the plane, going beyond the exactly solvable regime. We finish with discussions of higher dimensional extensions in Section \ref{highdim}.

\subsection*{Acknowledgements} The authors thank Ofer Zeitouni for useful conversations. Research of RB is partially supported by a Simons Junior Faculty Fellowship and a Ramanujan Fellowship by Govt. of India. SG is supported by a Miller Research Fellowship.

\section{Strong concentration of the conditional geodesic}
\label{s:sc}
In this section we prove Theorem \ref{res1}. The crucial first step is to obtain a geometric interpretation of the $n^2$ speed for the LDP for the lower tail, showing that under the conditioning the sum of all edge weights becomes macroscopically smaller.

\begin{proposition}
\label{FKG}
Given $\delta>0$ there exists $\e\in (0,\frac{1}{16})$ such that
$$\P(\sum_{v\in \llbracket 0,n \rrbracket ^2}X_v \le (1-4\e)n^2\mid \cL_\delta) \ge 1-e^{-cn},$$
for some $c=c(\delta)$ and for all $n$ large enough. (We write $4\e$ instead of $\e$ to avoid notational cluttering later.)
\end{proposition}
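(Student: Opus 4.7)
The plan is to rigorize the $\Theta(n)$-disjoint-strip heuristic sketched just above, which simultaneously explains the $n^2$ speed of $\P(\cL_\delta)$ and forces a linear-in-$n$ fraction of vertices to carry substantially reduced weight under $\cL_\delta$.

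First, as a baseline, I would record the lower bound $\P(\cL_\delta)\ge e^{-C_1(\delta)n^2}$ via the explicit deterministic witness $\mathcal{M}=\{X_v\le(4-\delta)/2-1/n \text{ for all } v\in\llbracket 0,n\rrbracket^2\}$, on which $L_n\le(2n+1)\bigl((4-\delta)/2-1/n\bigr)\le(4-\delta)n$ automatically; here $\P(\mathcal{M})=(1-e^{-(4-\delta)/2+o(1)})^{(n+1)^2}$ for an explicit $C_1(\delta)<\infty$. This fixes the reference scale.

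The heart of the argument is the complementary upper bound $\P\bigl(\cL_\delta\cap\{\sum_v X_v>(1-4\e)n^2\}\bigr)\le e^{-(C_1(\delta)+c)n^2}$ for some $c=c(\delta,\e)>0$, valid for $\e$ small enough. For this I would fix $K=\Theta(n)$ disjoint translates $R_1,\dots,R_K$ of a diagonal parallelogram of width $C=C(\delta)$ around the main diagonal, chosen large enough that the longest directed path inside each $R_i$ has typical weight at least $(4-\delta')n$ for some $\delta'<\delta$. By gluing any internal $R_i$-geodesic onto a $\mathbf{0}\to\mathbf{n}$ path via axis-parallel segments of total length $O(C)$, the event $\cL_\delta$ forces the internal $R_i$-LPP below its typical value by $\Theta(n)$. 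Within each strip, the FKG inequality couples this forced-small-LPP event to the strip sum $\sum_{v\in R_i}X_v$ in the correct monotone direction, and a Chernoff/Cramer-style exponential tilt on the strip environment converts the forced LPP deficit into a strict constant-factor reduction of the strip sum below its mean $|R_i|$, with probability at least $1-e^{-c'n}$. Disjointness of the $R_i$ gives independence of their weight sums, so summing the $K=\Theta(n)$ reductions produces a $\Theta(n^2)$ deficit in $\sum_v X_v$. Dividing this bound by the baseline from step one then yields the claimed conditional estimate $\P(\sum_v X_v>(1-4\e)n^2\mid\cL_\delta)\le e^{-cn^2}\le e^{-cn}$.

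The \textbf{main obstacle} is the quantitative within-strip estimate: FKG alone supplies only the correct monotone direction, and producing a strict constant-factor reduction in the strip sum requires a Chernoff/Cramer tilt inside each strip that is robust enough to depend only on the $n^2$ speed of the lower-tail LDP (not on Johansson's explicit rate function), so as to remain applicable to the non-integrable settings later in the paper. For the same reason $\e$ must be chosen small relative to $\delta$---otherwise the forced LPP deficit inside each strip would not dominate typical $O(\sqrt{|R_i|})$ fluctuations of $\sum_{v\in R_i}X_v$, and the strip-level strict reduction would fail.
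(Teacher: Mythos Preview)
Your strip decomposition and the direction you want to push (forced LPP deficit in each strip $\Rightarrow$ forced sum deficit) match the paper's structure. But your execution diverges at the crucial step, and the route you sketch has a real gap.

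The paper does \emph{not} bound $\P(\cL_\delta\cap\{\text{sum large}\})$ and then divide by a crude lower bound on $\P(\cL_\delta)$. That route is problematic: your witness $\mathcal{M}$ gives $\P(\cL_\delta)\ge e^{-C_1 n^2}$ with a $C_1$ far larger than the true rate $I_\ell(\delta)$, so for the ratio to be small you would need $\P(\cL_\delta\cap\{\text{sum large}\})\le e^{-(C_1+c)n^2}$ with that same suboptimal $C_1$. Your within-strip tilt has no mechanism to produce this particular constant; whatever upper bound it yields is tied to the true rate, not to $C_1$. Relatedly, the step ``forced LPP deficit $\Rightarrow$ constant-factor sum reduction with probability $1-e^{-c'n}$'' is asserted but not proved, and once you condition on $\cL_\delta$ the strips are no longer independent, so ``summing the $K$ reductions'' is not justified either. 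The obstacle you flag at the end is genuine and, in your framework, unresolved.

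The paper's device sidesteps all of this. It uses FKG not as a within-strip correlation inequality but as a single \emph{global monotone coupling}: since $\cL_\delta$ is a decreasing event, one can couple the conditioned field $\Pi^*=(X_v^*)$ with an unconditioned field $\Pi=(X_v)$ so that $X_v^*\le X_v$ for every $v$ almost surely. Under this coupling one has, \emph{deterministically} in each strip $R_i$,
\[
\sum_{v\in R_i}(X_v-X_v^*)\ \ge\ L_n^i - L_n^{i,*}\ \ge\ L_n^i-(4-\delta)n,
\]
the first inequality because the $X$-geodesic in $R_i$ is a feasible path in the $X^*$-environment (so the LPP gap is at most the weight gap along that path, hence at most the strip-sum gap), the second because any strip path extends to a $\mathbf{0}\to\mathbf{n}$ path and $\Pi^*$ lives on $\cL_\delta$. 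Now only the \emph{unconditioned} side carries randomness: a sub-additivity plus Cram\'er argument gives $L_n^i\ge (4-\delta/2)n$ for all $\Theta(n)$ strips simultaneously except with probability $e^{-cn}$, so each strip contributes a deficit $\ge \delta n/2$, and summing over $\Theta(n)$ strips yields $\sum_v (X_v-X_v^*)\ge \Theta(n^2)$. Together with the trivial concentration $\sum_v X_v\le (1+o(1))n^2$ this gives $\sum_v X_v^*\le (1-4\e)n^2$ with probability $1-e^{-cn}$. No tilt, no rate-matching, and no per-strip conditional estimate is needed.
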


\begin{figure}[hbt]\label{fig2}
\centering
\includegraphics[scale=.5]{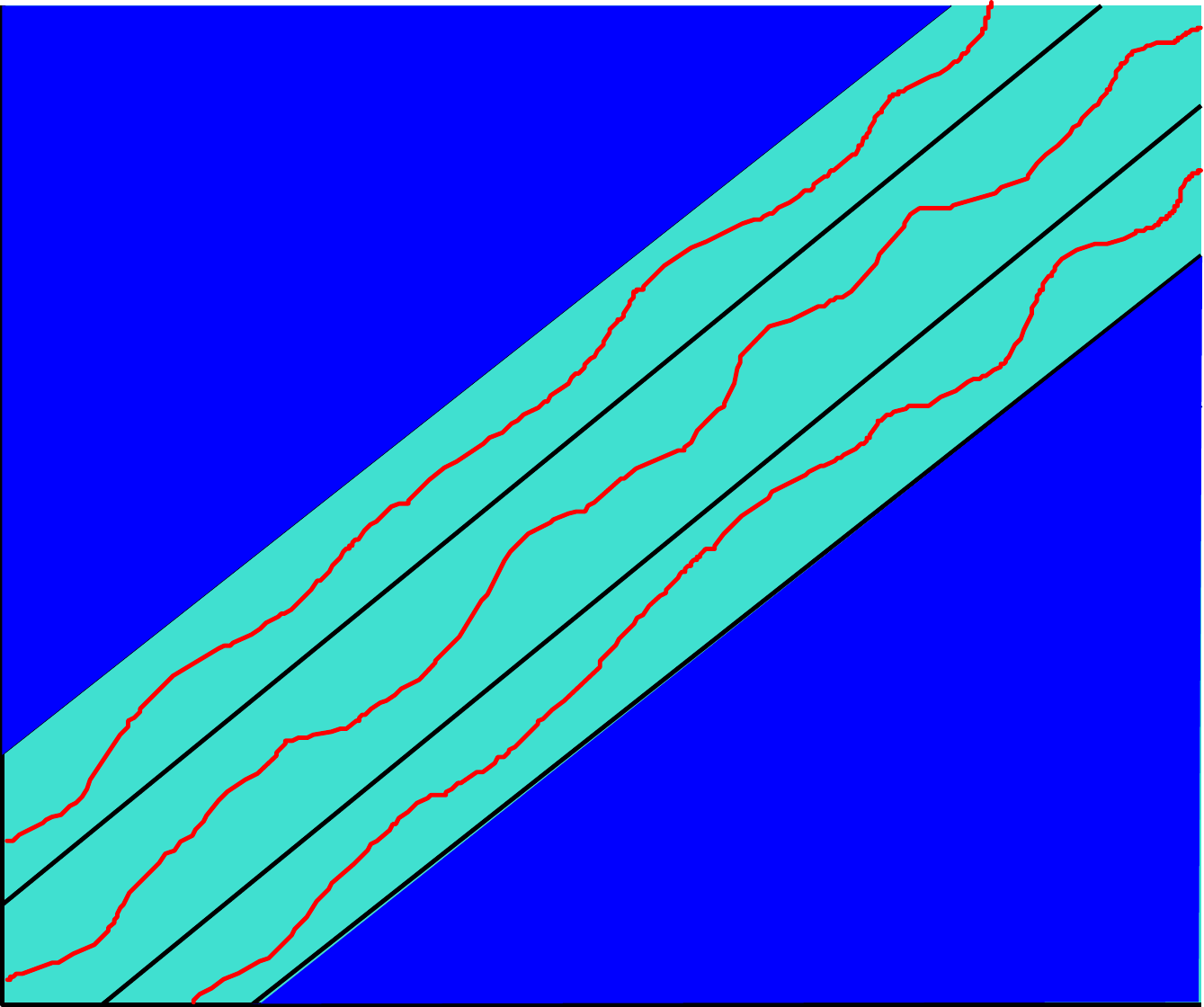}
\caption{Disjoint strips of constant width have disjoint paths of weight at least $(4-\delta/2)n$ and hence the total weight in each of these strips in the large deviation noise field must be at least $\frac{\delta}{2}n$  smaller than the typical counter part.}
\end{figure}

Proof of Proposition \ref{FKG} will depend on the FKG inequality, and the argument sketched in the beginning of Section \ref{s:outline} showing that the large deviation occurs at speed $n^2$. We first need to set up some notation. For $i\in \Z$ and $K\in \N$, let ${\rm{Strip}}^{K}_i$ be defined as follows.
$$ {\rm{Strip}}^{K}_{i}=\{(x,y)\in \llbracket 0,n \rrbracket^2: |x-y-4iK|\leq K\};$$
that is, ${\rm{Strip}}^{K}_{0}$ is the strip of width $2K$ around the main diagonal $\{x=y\}$, and the other strips are its translates by integer multiples of $4K.$ Note that the above definition ensures that ${\rm{Strip}}^{K}_{i}$ are disjoint for different values of  $i\in \Z$. The next lemma shows that  with high probability, in a typical environment, for any strip that is not too far from the main diagonal, there will be one path through each strip with length close to that of the longest path.
Recall the notation $L_n(A)$ from the statement of Theorem \ref{res2}.
We have the following lemma.
\begin{lemma}
\label{l:strip}
Fix $4>\delta>0$. There exists $K,c_0,c'>0$ depending on $\delta$ such that with probability at least $1-e^{-c'n}$ simultaneously for all $i\in \N$  with $i\leq c_0n$ we have
$$L_{n}({\rm{Strip}}^{K}_i)\ge (4-\frac{\delta}{2})n.$$
\end{lemma}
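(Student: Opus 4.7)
The plan is, for each strip, to exhibit a directed path lying inside it of length at least $(4-\delta/2)n$ with probability $1-e^{-c'n}$, and then union bound over $i$. By the reflection $(x,y)\mapsto (y,x)$ it suffices to handle $i\geq 0$. Writing $a=4iK$, for $c_0$ small the strip ${\rm{Strip}}^K_i$ runs between the near-corners $(a,0)$ and $(n,n-a)$, both of which lie on its central diagonal $\{x-y=a\}$.

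The geometric core is a staircase of $K\times K$ blocks. Setting $m:=\lfloor (n-a)/K\rfloor$ and $p_j:=(a+jK,jK)$ for $0\leq j\leq m$, each consecutive pair $(p_{j-1},p_j)$ spans a $K\times K$ box lying entirely inside ${\rm{Strip}}^K_i$. Concatenating the unrestricted geodesics $\Gamma(p_{j-1},p_j)$ then produces a directed path contained in the strip whose length is at least
$$\sum_{j=1}^{m} L(p_{j-1},p_j)\;-\;\sum_{j=1}^{m-1} X_{p_j},$$
where the subtraction corrects for the $m-1$ corner weights shared by adjacent blocks. Each summand $L(p_{j-1},p_j)$ has the law of $L_{K,K}$, and by Theorem~\ref{t:lln} I would fix $K=K(\delta)$ large enough that $\E L_{K,K}\geq (4-\delta/16)K$.

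The remaining step is exponential-in-$n$ concentration for this lower bound. Applied to the $K\times K$ subproblem, Theorem~\ref{t:ldp} gives $\P(L_{K,K}\leq (4-\delta/8)K)\leq e^{-c_1(\delta) K^2}$, so per block the lower tail is crushed. Adjacent blocks share a single vertex, but the even- and odd-indexed sub-sums are each sums of genuinely i.i.d.\ variables; a Hoeffding/Bernstein bound then yields $\sum_{j=1}^m L(p_{j-1},p_j)\geq (4-\delta/4)mK$ off an event of probability at most $e^{-c_2(\delta) n/K}$. The corner correction $\sum X_{p_j}$ has mean $m-1$ and concentrates at the same scale, so is absorbed into a $\delta n/100$ slack once $K$ is chosen large. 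Selecting $c_0=c_0(\delta)$ small enough that $a\leq \delta n/32$, the constructed path has length at least $(4-\delta/2)n$, and a union bound over the at most $c_0 n+1$ values of $i$ preserves exponential decay, producing the claimed constant $c'=c'(\delta)>0$.

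The only mildly delicate issue will be promoting per-block concentration into exponential-in-$n$ concentration for the full sum while contending with the shared corner weights; the odd/even split handles the dependence cleanly, and since $K$ is a constant taken much larger than the $K^{1/3}$ fluctuation scale of $L_{K,K}$, crude per-block tail bounds — far weaker than the full LDP — would already be more than sufficient. Everything beyond this is routine bookkeeping.
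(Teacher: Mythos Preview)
Your proof is correct and follows essentially the same approach as the paper: build a staircase of $K\times K$ blocks along the central line of each strip, pick $K$ large so that $\E L_K$ is close to $4K$, use exponential concentration below the mean for the sum of the block passage times, then choose $c_0$ small and union bound over $i$. Your treatment is in fact slightly more careful than the paper's, which writes $L_n({\rm Strip}^K_0)\ge L(\mathbf{0},\mathbf{K})+L(\mathbf{K},\mathbf{2K})+\cdots$ and calls the summands i.i.d.\ without addressing the shared corner vertices; your odd/even split and explicit corner subtraction handle this cleanly, though the discrepancy is in any case $O(n/K)$ and harmlessly absorbed into the slack.
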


We first prove the following preparatory lemma.

\begin{lemma}
\label{l:strip0}
Fix $\delta\in (0,4)$. There exists $K$ sufficiently large and $c'>0$ so that for all $n$ sufficiently large we have
$$\P\left(L_{n}({\rm{Strip}}^{K}_0)\ge (4-\frac{\delta}{4})n\right)\geq 1-e^{-c'n}.$$
\end{lemma}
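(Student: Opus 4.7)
The plan is to construct, with probability at least $1 - e^{-c'n}$, an explicit directed path from $\mathbf{0}$ to $\mathbf{n}$ that lies inside ${\rm Strip}_0^K$ and whose weight is at least $(4 - \delta/4)n$. I will take $K = M$ for a large integer $M = M(\delta)$, and tile the main diagonal with disjoint $M \times M$ boxes $B_j := \llbracket (j-1)M, jM \rrbracket^2$ for $j = 1, \dots, q := \lfloor n/M \rfloor$. Each $B_j$ lies inside ${\rm Strip}_0^M$ since $|x - y| \le M$ whenever $(x,y) \in B_j$, so concatenating the interior geodesics of the boxes produces a directed path from $\mathbf{0}$ to $(qM, qM)$ inside the strip; any directed extension from $(qM, qM)$ to $\mathbf{n}$ (needed only if $M \nmid n$) stays within $M$ of the diagonal and contributes nonnegative weight.

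Let $Y_j$ denote the last passage time across the box $B_j$. By independence of the weights across disjoint boxes, $Y_1, \dots, Y_q$ are i.i.d.\ copies of $L_M$. The lower tail large deviation bound in Theorem \ref{t:ldp} (using $I_\ell(\delta/8) > 0$) supplies $c = c(\delta) > 0$ such that for $M$ large enough,
$$p := \P\bigl(Y_j \le (4 - \delta/8)M\bigr) \le e^{-cM^2}.$$
The number $Z := \sum_{j=1}^q \mathbf{1}\{Y_j \le (4-\delta/8)M\}$ of ``bad'' boxes is therefore $\mathrm{Binomial}(q, p)$, and a standard Chernoff estimate yields
$$\P(Z \ge \alpha q) \le \Bigl(\tfrac{e\,p}{\alpha}\Bigr)^{\alpha q} \le e^{-\alpha c M^2 q / 2} = e^{-\alpha c M n / 2}$$
for any fixed $\alpha \in (0,1)$, once $M$ is chosen large enough that $cM^2$ dominates the constant $\log(e/\alpha)$.

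Choose $\alpha = \alpha(\delta) > 0$ small enough that $(4 - \delta/8)(1 - \alpha) \ge 4 - \delta/4$, which is possible because $\tfrac{4-\delta/4}{4-\delta/8} < 1$. On the complementary event $\{Z < \alpha q\}$, at least $(1-\alpha)q$ of the boxes satisfy $Y_j \ge (4 - \delta/8)M$, and the remaining $Y_j$ are nonnegative, so the concatenated path has weight at least
$$\sum_{j=1}^q Y_j \;\ge\; (4 - \delta/8)M \cdot (1 - \alpha)q \;\ge\; (4 - \delta/4)\,n.$$
This proves $L_n({\rm Strip}_0^K) \ge (4 - \delta/4)n$ on the good event, with failure probability at most $e^{-c'n}$, where $c' = \alpha c M / 2$. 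The only delicacy, and the step that I expect to require the most care, is calibrating $M$: it must be large enough both for Theorem \ref{t:ldp} to force $p \le e^{-cM^2}$ at the threshold $(4-\delta/8)$, and for the resulting Chernoff exponent $\alpha c M / 2$ to translate into a genuine linear-in-$n$ decay rate $c'$. Since $p$ decays like $e^{-\Theta(M^2)}$ while the Chernoff exponent only needs to be any positive constant, any sufficiently large fixed $M = M(\delta)$ meets both requirements, and the leftover strip of length at most $M$ at the endpoint (when $M \nmid n$) is trivially absorbed.
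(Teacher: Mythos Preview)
Your proof is correct and follows the same super-additive decomposition as the paper: tile the diagonal by $K\times K$ boxes, lower-bound $L_n({\rm Strip}_0^K)$ by the sum of the box passage times, and then show this sum is large with exponentially high probability. The only real difference is in the concentration step. The paper invokes Theorem~\ref{t:lln} to pick $K$ with $\E L_K \ge (4-\delta/8)K$ and then appeals directly to the exponential lower tail of a sum of i.i.d.\ nonnegative variables below its mean; you instead invoke the full lower-tail LDP (Theorem~\ref{t:ldp}) to bound $\P(L_M\le (4-\delta/8)M)$ and then run a Chernoff count on bad boxes. Both routes work, but the paper's is lighter: it uses only the law of large numbers rather than the rate-function result (which in the exponential case relies on integrable-probability input), in keeping with the paper's emphasis on avoiding exact solvability. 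Your ``bad box'' counting is a perfectly valid alternative, just slightly more machinery than needed.
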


\begin{proof}
Using Theorem \ref{t:lln},  choose $K$ sufficiently large so that $\E L_{K} \geq (4-\frac{\delta}{8})K$. Assume without loss of generality that $n$ is a multiple of $K.$ Now we use super-additivity to argue that $$L_{n}({\rm{Strip}}^{K}_0)\ge L({\bf{0}},{\bf{K}})+L({\bf{K}},{\bf{2K}})+\ldots L({\bf{n-K}},{\bf{n}}),$$
where $L(\cdot,\cdot)$ was defined in Definition \ref{geodesic1}.
Now note that all the terms on the right hand side above are i.i.d.\ random variables distributed as $L_K$, and hence the proof follows from standard exponentially small probability bounds for deviation below the mean for sums of such variables.
\end{proof}

We are now ready to prove Lemma \ref{l:strip}.

\begin{proof}[Proof of Lemma \ref{l:strip}]
For $i\geq 0$, let $L_n^{i}$ denote the length of the best path from $(4iK,0)$ to $(n,n-4iK)$ that is contained in ${\rm{Strip}}^{K}_i$. Observe that by translation invariance, for all $n$ sufficiently large,
$L_{n}^{i}$ is distributed as  $L^{0}_{n-4|i|K},$ (see Figure \ref{fig2}).
Fixing $\delta,$  choose $K$ sufficiently large depending on $\delta$ so that the conclusion of Lemma \ref{l:strip0} holds. Now  choose $c_0$ sufficiently small so that $(4-\frac{\delta}{4})(1-4c_0K) \geq (4-\frac{\delta}{2})$. The proof is now completed by using Lemma \ref{l:strip0} and taking a union bound over all $i\in \N$ with $i\leq c_0n$.
\end{proof}

We state another simple lemma without proof, which is a straightforward  consequence of FKG inequality. Recall definition of $\Pi$ from Section \ref{def1}.  Let $\Pi^*=\{X^*_v\}_{v\in \Z^2}$ be $\Pi$ distributed conditionally on $\cL_\delta.$

\begin{lemma}\label{fkg203}
There is a coupling $(\Pi^*,\Pi)$ such that almost surely, $X^*_v\le X_v$ for all $v\in \Z^2$.
\end{lemma}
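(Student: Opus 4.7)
The plan is to reduce this to a standard stochastic domination statement via the FKG inequality and then invoke Strassen's theorem to produce the coupling.

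First I would observe that the event $\cL_\delta=\{L_n\le (4-\delta)n\}$ is a \emph{decreasing} event with respect to the partial order on $\R^{\Z^2}$ coming from coordinatewise comparison of the weights. Indeed, the passage time $L_n$ is a coordinatewise non-decreasing function of $\Pi=\{X_v\}_{v\in\Z^2}$, since every path weight $L(\gamma)=\sum_{u_i\in \gamma} X_{u_i}$ is non-decreasing in each $X_v$, and taking a maximum preserves monotonicity. Hence $\mathbf{1}_{\cL_\delta}$ is a non-increasing function of the weights. Note also that $\cL_\delta$ depends only on $\{X_v:v\in\llbracket 0,n\rrbracket^2\}$, so off this box we may simply take $X^*_v=X_v$ and the coupling is trivial; the work is entirely inside this finite box.

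Next I would apply the FKG inequality for the product measure $\mu$ on $[0,\infty)^{\llbracket 0,n\rrbracket^2}$ with Exp$(1)$ marginals. For any bounded coordinatewise non-decreasing $f:\R^{\llbracket 0,n\rrbracket^2}\to \R$, FKG gives $\E_\mu[f\cdot \mathbf{1}_{\cL_\delta}] \le \E_\mu[f]\,\P_\mu(\cL_\delta)$, since $f$ is increasing and $\mathbf{1}_{\cL_\delta}$ is decreasing. Dividing through by $\P_\mu(\cL_\delta)>0$ yields
\[
\E[f(\Pi^*)] = \E[f(\Pi)\mid \cL_\delta] \le \E[f(\Pi)].
\]
This is precisely the statement that the conditional law $\Pi^*$ is stochastically dominated by the unconditional law $\Pi$ on the finite product space $[0,\infty)^{\llbracket 0,n\rrbracket^2}$.

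Finally I would invoke Strassen's theorem on stochastic domination: whenever one probability measure on a partially ordered Polish space is dominated by another, there exists a coupling of the two under which the order holds almost surely. Applying this to $\Pi^*$ and $\Pi$ restricted to the finite box $\llbracket 0,n\rrbracket^2$, and then extending by setting $X^*_v=X_v$ outside the box, produces the desired joint law with $X^*_v\le X_v$ for every $v\in\Z^2$ almost surely. The only subtlety worth flagging is to confirm we are on a Polish space with a closed partial order so Strassen applies, which is immediate here since $[0,\infty)^{\llbracket 0,n\rrbracket^2}$ is Polish and the coordinatewise order is closed; no genuine obstacle is expected in any step of the argument.
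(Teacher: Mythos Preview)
Your proof is correct and is precisely the standard argument the paper has in mind: the paper simply states the lemma ``without proof'' as ``a straightforward consequence of FKG inequality,'' and your write-up (decreasing event, FKG for product measures giving stochastic domination, Strassen's theorem for the coupling) is exactly how one unpacks that phrase.
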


We are finally ready to prove Proposition \ref{FKG}.

\begin{proof}[Proof of Proposition \ref{FKG}]
Consider the coupling $(\Pi^*,\Pi)$ from the previous lemma.
For all $0\le i\le c_0n,$ (where $c_0$ appears in Lemma \ref{l:strip}) note that
\begin{align}\label{bnd45}
\sum_{v\in{\rm{Strip}}^{K}_i} (X_{v}-X^*_v)&\ge L^i_n-L^{i,*}_n\\
\nonumber
&\ge \frac{\delta}{2} n \text{ with probability at least } 1-e^{-cn},
\end{align}
where $L^{i,*}_n$ denotes the natural analogue of  $L^i_n$ corresponding to $\Pi^*$ and the last probability bound follows from Lemma \ref{l:strip0} and that by definition $L^{i,*}_n \le (4-\delta)n.$
The proof is now complete by summing the above inequality over $0\le i\le c_0n$ and noting that
\begin{align*}
\sum_{v\in \llbracket 0,n \rrbracket ^2}(X_v-X^*_v)&\ge \sum_{i=0}^{c_0 n}\left(\sum_{v\in{\rm{Strip}}^{K}_i} (X_{v}-X^*_v)\right)\\
& \ge \frac{\delta c_0 n^2}{2}\text{ with probability at least } 1-e^{-cn},
\end{align*}
where the last inequality follows from  \eqref{bnd45} and union bound over $0\le i \le c_0 n.$ The proof is now complete by choosing $4\e=\frac{\delta c_0}{4},$ and using the straightforward consequence of concentration of sum of exponential random variables, that, $\sum_{v\in \llbracket 0,n \rrbracket ^2}X_v\le (1+\frac{\delta c_0}{4})n^2$ with probability at least $1-e^{-cn^2}$ for some $c=c(\delta)>0.$
\end{proof}

\bigskip

Before continuing with the proof of Theorem \ref{res1}, we need to set up some more notation.

\noindent
\textbf{For the remainder of this section $\delta \in (0,4)$ and $\e$ will be fixed such that the conclusion of Proposition \ref{FKG} holds.}

For $i\in \llbracket 0,2n \rrbracket$, let $D_i$ denote the $i$-th anti-diagonal, i.e.,
\begin{align}\label{antid12}
D_{i}&=\{v=(x,y)\in \llbracket 0,n \rrbracket ^2: x+y=i\} \text{ and let, }\\
\label{nicint}
I&=\{i\in \N: |i-n|\leq (1-\frac{\sqrt{\e}}{4})n\}.
\end{align}
 We shall restrict our attention to those $D_i$'s with $i\in I$, in particular these $D_i$'s all have size linear in $n$.  For $i\in I$, let $\cF_i$ denote the sigma algebra generated by $\{X_v: v\notin D_i\}$. Let $\gamma_{v}$ denote the longest directed path from $\mathbf{0}$ to $\mathbf{n}$ that passes through $v$ (as already mentioned any such directed path intersects $D_i$ exactly once). Observe that $\gamma_v$ is $\cF_{i}$ measurable, and so is $L(\gamma_{v})-X_{v}$. For any $v\in D_i$, set
$$R_{v}=(4-\delta)n- (L(\gamma_{v})-X_{v}).$$
Notice that $R_v$ is $\cF_{i}$ measurable. We now have the following easy lemma.

\begin{lemma}
\label{l:distexcess}
Fix $i\in I$ and condition on $\cF_{i}$, so that $\cF_{i}$ is compatible with $\cL_{\delta}$ (this implies $R_{v}>0$ for each $v\in D_{i}$). Then conditional on $\cF_{i}$, and the event  $\cL_{\delta}$, the random variables $\{X_{v}: v\in D_i\}$ are independent and the conditional distribution of $X_{v}$ is given by that of an exponential random variable with rate one conditioned to be at most $R_{v}$ (we denote this law by ${\rm Exp}(0,R_{v})$).
\end{lemma}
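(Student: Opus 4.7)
The plan is to exploit the central combinatorial fact stated in the setup: every directed path from $\mathbf{0}$ to $\mathbf{n}$ meets the anti-diagonal $D_i$ in exactly one vertex. This lets us decompose the event $\cL_\delta$ as a product of one-sided constraints, one per vertex of $D_i$, and then the Exponential calculation is immediate.

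First I would verify that $M_v := L(\gamma_v)-X_v$ is $\cF_i$-measurable. Writing a path through $v$ as the concatenation of a path from $\mathbf{0}$ to $v$ and a path from $v$ to $\mathbf{n}$, one has $L(\gamma_v)=L(\mathbf{0},v)+L(v,\mathbf{n})-X_v$, so $M_v = (L(\mathbf{0},v)-X_v)+(L(v,\mathbf{n})-X_v)$; each summand depends only on $\{X_u:u\ne v\}$, hence only on $\cF_i$. Consequently $R_v=(4-\delta)n-M_v$ is $\cF_i$-measurable as well.

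Next, since every directed path from $\mathbf{0}$ to $\mathbf{n}$ hits $D_i$ exactly once, we may partition the set of such paths according to their crossing point and obtain
\[
L_n \;=\; \max_{v\in D_i} L(\gamma_v) \;=\; \max_{v\in D_i}\bigl(X_v + M_v\bigr).
\]
Therefore
\[
\cL_\delta \;=\; \{L_n\le (4-\delta)n\} \;=\; \bigcap_{v\in D_i}\{X_v \le R_v\}.
\]
Under the unconditional law, the family $\{X_v:v\in D_i\}$ consists of i.i.d.\ Exp$(1)$ variables that are independent of $\cF_i$. Fixing a realization of $\cF_i$ compatible with $\cL_\delta$ (so each $R_v>0$), the event $\cL_\delta$ reduces to a product of independent events $\{X_v\le R_v\}$ involving independent variables. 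Standard Bayes manipulation then gives that, conditional on $\cF_i$ and $\cL_\delta$, the variables $\{X_v:v\in D_i\}$ remain independent and each $X_v$ is distributed as Exp$(1)$ conditioned on $\{X_v\le R_v\}$, i.e.\ $\mathrm{Exp}(0,R_v)$.

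There is no real obstacle here; the only thing to watch is the bookkeeping of the measurability of $M_v$ (in particular that the formula $L(\gamma_v)=L(\mathbf{0},v)+L(v,\mathbf{n})-X_v$ double-counts $X_v$ exactly once and cancels correctly), and the fact that the decomposition $\cL_\delta=\bigcap_v\{X_v\le R_v\}$ uses nothing beyond the single-crossing property of anti-diagonals by monotone lattice paths.
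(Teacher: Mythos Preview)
Your argument is correct and is precisely the approach the paper takes, only written out in more detail: the paper's one-line proof simply observes that conditioning on $\cF_i$ and $\cL_\delta$ forces each $X_v$ to lie below $R_v$, which is exactly your product decomposition $\cL_\delta=\bigcap_{v\in D_i}\{X_v\le R_v\}$ together with the independence of the $X_v$'s from $\cF_i$. Your explicit verification that $M_v$ is $\cF_i$-measurable via $M_v=(L(\mathbf{0},v)-X_v)+(L(v,\mathbf{n})-X_v)$ is a welcome clarification the paper leaves implicit.
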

\begin{proof}The proof follows by observing that the only effect of conditioning on $\cL_{\delta}$ and $\cF_i,$ is that, for any $v\in D_i,$ $X_v$ is restricted to be less than $R_v$ since otherwise the conditioning imposed by $\cL_{\delta}$ would be violated.
\end{proof}

The next lemma is an easy consequence of properties of truncated exponential variables.

\begin{lemma}
\label{l:trunc}
Let $X_{1}, X_2, \ldots, X_{m}$ be independent with $X_i\sim {\rm Exp}(0,f_i)$ where $f_i>0$ for all $i\in \llbracket m\rrbracket$. Let $M>0$ be such that $\E( {\rm Exp}(0,M))= (1-\e/4)$. Suppose $\#\{i: f_{i}\leq M\}\leq \e m/2$. Then there exists $c>0$ such that
$$\P\left(\sum_{i=1}^m X_{i} \leq (1-\e)m\right)\leq e^{-cm}.$$
\end{lemma}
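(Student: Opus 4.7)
The approach is to lower bound $\sum_{i=1}^m X_i$ by a sum of i.i.d.\ bounded variables and apply a standard concentration inequality. The key structural observation is a simple stochastic domination: for $f>M$, the law ${\rm Exp}(0,f)$ stochastically dominates ${\rm Exp}(0,M)$, which one sees directly by comparing the CDFs $\tfrac{1-e^{-t}}{1-e^{-f}}$ and $\tfrac{1-e^{-t}}{1-e^{-M}}$ on $[0,M]$ and noting that the latter CDF equals $1$ for $t>M$. Consequently, for each $i$ with $f_i>M$, I can couple $X_i$ with an independent $Y_i \sim {\rm Exp}(0,M)$ so that $X_i \ge Y_i$ almost surely, with the $Y_i$'s jointly independent; for the remaining indices I simply use $X_i \ge 0$.

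Let $I^+ = \{i : f_i > M\}$. The hypothesis of the lemma gives $|I^+| \ge (1-\e/2)m$, so
$$\sum_{i=1}^m X_i \;\ge\; \sum_{i \in I^+} Y_i,$$
a sum of at least $(1-\e/2)m$ i.i.d.\ variables, each supported in $[0,M]$ with common mean $1-\e/4$ (by the defining property of $M$). Hence the mean of the right-hand side is at least $(1-\e/2)(1-\e/4)m > (1-3\e/4)m$, which exceeds $(1-\e)m$ by a linear amount $\tfrac{\e}{4}m$ (up to lower order). Applying Hoeffding's inequality to the i.i.d.\ bounded sum $\sum_{i\in I^+} Y_i$ yields $\P(\sum_{i\in I^+} Y_i \le (1-\e)m) \le \exp(-cm)$ for some $c = c(\e,M) > 0$, which transfers to $\sum X_i$ via the coupling and finishes the proof.

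There is no substantive obstacle here; the argument is essentially a routine concentration estimate. The only point worth flagging is that the hypothesis is necessarily stated as a bound on the \emph{number} of indices with $f_i \le M$ rather than a uniform lower bound on the $f_i$'s, precisely because the stochastic-domination step only gives control on the summands indexed by $I^+$. The choice $\E({\rm Exp}(0,M)) = 1-\e/4$ is what creates the linear gap between the conditional expectation of the sum and the threshold $(1-\e)m$, and this slack is what Hoeffding then converts into an exponentially small probability.
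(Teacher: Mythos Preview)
Your proof is correct and follows the same approach as the paper: both use that ${\rm Exp}(0,f)$ stochastically dominates ${\rm Exp}(0,M)$ for $f>M$, drop the indices with $f_i\le M$, and then apply exponential concentration for the resulting i.i.d.\ bounded sum. Your write-up simply makes explicit the CDF comparison and names Hoeffding as the concentration tool, whereas the paper compresses these into one sentence.
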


\begin{proof}
The proof follows from observing that ${\rm Exp}(0,M)$ is stochastically increasing in $M$ and hence letting $A=\{i: f_{i}>M\}$
we see that $\P\left(\sum_{i=1}^m X_{i} \leq (1-\e)m\right) \le \P\left(\sum_{i\in A} Y_{i} \leq (1-\e)m\right)$ where $Y_i$ are i.i.d.\ ${\rm Exp}(0,M)$ variables. The proof now follows from
 exponential concentration of sums of i.i.d.\ variables.
\end{proof}
Let  $M$  as in the above lemma be fixed for the remainder of this section. For $i\in I$, let us define the event
 $$\cM_i:=\biggl\{\#\{v\in D_{i}: R_{v}\leq M\}\geq \e |D_i|/2\biggr\}.$$
Observe that $\cM_i$ is $\cF_{i}$ measurable. Also let us denote by $\mathcal{C}_{H}$, the event from the statement of Theorem \ref{res1}, i.e.,
$$\mathcal{C}_{H}:=\left\{L_{n}\ge (4-\delta)n-\frac{H}{n}\right\}.$$
We want to prove that $\mathcal{C}_{H}$ happens with large probability conditional on $\cL_{\delta}$. The following lemma is the first step in this direction that demonstrates the usefulness of the events $\cM_{i}$.

\begin{lemma}
\label{super}
For any $\e_1>0$, there exists $H>0$ sufficiently large so that in the above set up we have for each $i\in I$ (see
\eqref{nicint}),
$$\P(\mathcal{C}_H\mid \cL_{\delta}, \cF_i)\ge (1-\e_1)\mathbf{1}(\cM_i).$$
\end{lemma}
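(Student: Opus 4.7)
The plan is to rewrite $\mathcal{C}_H$ as an event about the truncated variables $\{X_v : v \in D_i\}$ and then exploit the explicit conditional distribution supplied by Lemma \ref{l:distexcess}. Since every directed path from $\mathbf{0}$ to $\mathbf{n}$ meets the anti-diagonal $D_i$ at exactly one vertex, $L_n = \max_{v \in D_i} L(\gamma_v)$, and by the definition of $R_v$ this equals $(4-\delta)n - \min_{v \in D_i}(R_v - X_v)$. So the event $\mathcal{C}_H$ coincides with $\{\min_{v \in D_i}(R_v - X_v) \leq H/n\}$, and the task reduces to lower bounding $\P(\min_{v \in D_i}(R_v - X_v) \leq H/n \mid \cF_i, \cL_\delta)$ on the event $\cM_i$.

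The key per-vertex input is the following calculation. By Lemma \ref{l:distexcess}, conditional on $(\cF_i, \cL_\delta)$ the variables $\{X_v : v \in D_i\}$ are independent with $X_v \sim {\rm Exp}(0, R_v)$, so a direct integration against the truncated-exponential density gives, for any $v$ with $R_v \leq M$ and $H/n \leq M$,
$$\P(R_v - X_v \leq H/n \mid \cF_i, \cL_{\delta}) = \frac{e^{H/n} - 1}{e^{R_v} - 1} \geq \frac{H/n}{e^M - 1}.$$
I would then isolate the $\cF_i$-measurable set $V := \{v \in D_i : R_v \leq M\}$; on $\cM_i$ we have $|V| \geq \e |D_i|/2$, and since $i \in I$ forces $|D_i| \geq \sqrt{\e}\,n/4$, this yields $|V| \geq \e^{3/2} n/8$.

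Putting these together, conditional independence gives
$$\P\left(\min_{v \in V}(R_v - X_v) > H/n \mid \cF_i, \cL_\delta\right) \leq \left(1 - \frac{H/n}{e^M - 1}\right)^{|V|} \leq \exp\left(-\frac{\e^{3/2} H}{8(e^M - 1)}\right),$$
and one then chooses $H = H(\e_1, \e, M)$ large enough that the right-hand side is at most $\e_1$. Since $\cM_i$ is $\cF_i$-measurable, this yields $\P(\mathcal{C}_H \mid \cF_i, \cL_\delta) \geq 1 - \e_1$ on $\cM_i$, while off $\cM_i$ the stated inequality is trivial because its right-hand side is $0$. No step is genuinely hard: the argument is a clean application of Lemma \ref{l:distexcess} combined with the explicit truncated-exponential formula, and the only book-keeping item is the uniform-in-$v$ lower bound on the per-vertex success probability, which the closed form handles automatically.
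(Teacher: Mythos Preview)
Your proof is correct and follows essentially the same approach as the paper's own proof: both identify the subset $S=\{v\in D_i: R_v\le M\}$ (which has size linear in $n$ on $\cM_i$), use Lemma \ref{l:distexcess} to get conditional independence, and bound $\P(\mathcal{C}_H^c\mid \cL_\delta,\cF_i)$ by the product $\prod_{v\in S}\P({\rm Exp}(0,R_v)\le R_v-H/n)\le (1-cH/n)^{|S|}$. Your version is merely more explicit in writing out the reformulation $\mathcal{C}_H=\{\min_{v\in D_i}(R_v-X_v)\le H/n\}$ and in computing the truncated-exponential probability in closed form, whereas the paper absorbs this into a constant $c=c(M)$.
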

\begin{proof}
It follows from the definition of $I$ that on the event $\cM_{i}$, there exists a subset $S$ of $D_i$ with $|S|\geq \e^2 n$ such that $R_{v}\leq M$ for each $v\in S$. Using Lemma \ref{l:distexcess}, it follows that on $\cM_i$,
$$\P(\mathcal{C}_{H}^c\mid \cL_{\delta}, \cF_i)\leq \prod_{v\in S} \P\left({\rm Exp}(0,R_v)\leq R_{v}-\frac{H}{n}\right)\leq \left(1-\frac{cH}{n}\right)^{\e^2 n}$$
for some constant $c=c(M)>0$. Clearly, by choosing $H$ sufficiently large (depending on $c$ and $\e$ and $\e_1$) one can make the right hand side above smaller than $\e_1$. This completes the proof of the lemma.
\end{proof}
\begin{remark}\label{gen304}
Note that the above proof only relies on the following property of Exponential variables:  for each small enough $\e,$ and for any large enough $M,$ there exists $H$ such that for all large enough $n,$
 $\P({\rm Exp}(0,R)\in[R-\frac{H}{n},R])$ is  $\Theta(\frac{1}{\e^2 n}),$  uniformly for all $0\le R \le M,$ which is a simple consequence of the fact that the density of Exponential variables is uniformly away from zero on compact sets.
Also note that Lemma \ref{l:trunc}, does not rely on any special property of the Exponential distribution.
\end{remark}

Note that from Lemma \ref{super}, it follows that $$\P(\mathcal{C}_H\mid \cL_{\delta})\ge (1-\e_1)\P(\cM_i).$$
Thus the proof of  Theorem \ref{res1} is immediate if we could show the existence of $i\in I$ such that $\cM_i$ holds with probability very close to $1$.  However we are not quite able to show the latter and instead our basic strategy is to show that for many $i$'s in $I$ the event $\cM_{i}$  holds with significant probability though not quite close to one. The next few lemmas show why this suffices.

 To this end, it is useful to make the following definition. For $i\in I$, define
$$\cB_{i}=\left\{\sum_{v\in D_i} X^*_v\le (1-2\e)|D_i|\right\},$$ where $\e$ appears in the statement of Proposition \ref{FKG} and $\Pi^*$ was defined in Lemma \ref{fkg203}.
The following lemma relates $\cB_{i}$ to $\cM_{i}$.

\begin{lemma}
\label{l:bmest}
There exists a constant $c>0$ such that for each $i\in I$ we have,
$$\P(\cB_i \cap \cM_i^c\mid \cL_{\delta}) \le e^{-cn}.$$
\end{lemma}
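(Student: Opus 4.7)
The strategy exploits the conditional product structure from Lemma \ref{l:distexcess}: on $\cM_i^c$ only a small fraction of vertices in $D_i$ carry a tight barrier $R_v\le M$, so for the vast majority of $v\in D_i$ the variable $X^*_v\sim {\rm Exp}(0,R_v)$ is barely truncated and has mean close to $1$. As a result $\sum_{v\in D_i}X^*_v$ concentrates above $(1-2\e)|D_i|$, making $\cB_i$ exponentially rare.

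Since $\cM_i^c$ is $\cF_i$-measurable, the tower property reduces the task to proving that
\[
\P(\cB_i\mid \cL_\delta,\cF_i)\le e^{-cn}
\]
almost surely on $\cM_i^c$. Fix such an $\cF_i$. By Lemma \ref{l:distexcess}, $\{X^*_v\}_{v\in D_i}$ are then independent with $X^*_v\sim{\rm Exp}(0,R_v)$. A direct CDF computation (comparing $(1-e^{-t})/(1-e^{-R})$ for $R_1<R_2$) shows that ${\rm Exp}(0,R)$ is stochastically increasing in $R$. Hence, writing $S:=\{v\in D_i:R_v>M\}$, which by definition of $\cM_i^c$ satisfies $|S|\ge (1-\e/2)|D_i|$, we may couple $X^*_v\ge Y_v$ for $v\in S$, where $\{Y_v\}_{v\in S}$ are i.i.d.\ ${\rm Exp}(0,M)$ variables, each taking values in $[0,M]$ and with mean $1-\e/4$ by the choice of $M$. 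Dropping the nonnegative contributions of $v\notin S$,
\[
\sum_{v\in D_i}X^*_v \ge \sum_{v\in S}Y_v, \qquad \E\Bigl[\sum_{v\in S}Y_v\Bigr] \ge (1-\tfrac{\e}{4})(1-\tfrac{\e}{2})|D_i| \ge \bigl(1-\tfrac{3\e}{4}\bigr)|D_i|.
\]

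Thus $\cB_i$ forces $\sum_{v\in S}Y_v$ to fall below its mean by at least $\tfrac{5\e}{4}|D_i|$. Since the $Y_v$ are i.i.d.\ and bounded in $[0,M]$, Hoeffding's inequality gives
\[
\P\Bigl(\sum_{v\in S}Y_v\le (1-2\e)|D_i|\Bigr) \le \exp\Bigl(-\frac{2(5\e/4)^2|D_i|^2}{|S|M^2}\Bigr) \le e^{-cn},
\]
using $|S|\le |D_i|$ and the lower bound $|D_i|\ge (\sqrt{\e}/4)n$ for $i\in I$ (coming from the definition of $I$ in \eqref{nicint}). Plugging back into the tower expansion completes the proof.

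The only delicate point is the $\e$-bookkeeping: the combination of the lower bound $|S|\ge (1-\e/2)|D_i|$ and the per-vertex mean $1-\e/4$ on $S$ must genuinely exceed $(1-2\e)|D_i|$ by a margin linear in $|D_i|$. This is precisely why $M$ was chosen so that $\E\,{\rm Exp}(0,M)=1-\e/4$; once the margin is secured, Hoeffding immediately delivers the exponential rate, and the rest is a routine use of Lemma \ref{l:distexcess} and stochastic dominance.
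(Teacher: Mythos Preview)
Your proof is correct and follows essentially the same route as the paper's: the paper observes that $\cM_i^c$ is $\cF_i$-measurable and invokes Lemma \ref{l:distexcess} together with Lemma \ref{l:trunc} to obtain $\P(\cB_i\mid \cL_\delta,\cF_i)\le e^{-c|D_i|}$ on $\cM_i^c$, then uses that $|D_i|$ is linear in $n$ for $i\in I$. You have simply inlined the proof of Lemma \ref{l:trunc} (stochastic monotonicity of ${\rm Exp}(0,R)$ in $R$, dropping the small-$R_v$ vertices, and an explicit Hoeffding bound in place of the paper's generic appeal to exponential concentration), with the $\e$-bookkeeping spelled out in more detail than the paper provides.
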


\begin{proof}
Notice that it follows from Lemma \ref{l:distexcess} and Lemma \ref{l:trunc} that
\begin{equation}\label{rel34}
\P(\cB_i\mid \cL_{\delta}, \cF_i)\le e^{-c|D_i|}\mathbf{1}(\cM_i^c)+\mathbf{1}(\cM_i).
\end{equation}
This implies the lemma as $|D_i|$ is linear in $n$ for all $i\in I$.
\end{proof}

Lemma \ref{l:bmest} tells that we can essentially replace $\cM_i$ by $\cB_i$ in Lemma \ref{super}, even though $\cB_i$ is not $\cF_i$ measurable. Also observe that by Proposition \ref{FKG} (and definition of $I$) we know that it is very likely that at least one of the $\cB_{i}$'s hold. In fact we have something stronger.

\begin{corollary}
Given $\delta,$ let $\e>0$ be as in Proposition \ref{FKG}.
Then,
\begin{equation}
\label{e:manyb}
\P\left(\sum_{i\in I} \mathbf{1}(\cB_i)> \e n \middle| \cL_{\delta}\right)\ge 1-e^{-cn},
\end{equation}
for some $c=c(\delta)>0$ and all $n$ large enough.
\end{corollary}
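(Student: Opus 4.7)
The plan is to derive the corollary almost immediately from Proposition \ref{FKG} via a pigeonhole/contradiction argument, exploiting the defining property of the index set $I$: the excluded anti-diagonals, i.e.\ those with $|i-n| > (1-\sqrt{\e}/4)n$, each have size at most $n\sqrt{\e}/4$, so their union contains only $\le \e n^2/16 + O(n)$ vertices. Thus almost all of the mass of $\sum_v X^*_v$ comes from anti-diagonals indexed by $I$, and I would exploit this to show that if too few of the $\cB_i$ hold then the total sum is forced to be large, contradicting Proposition \ref{FKG}.

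Concretely, I would condition on $\cL_\delta$ and work on the high-probability event $\{\sum_v X^*_v \le (1-4\e)n^2\}$ supplied by Proposition \ref{FKG}. Assume for contradiction that $\sum_{i\in I}\mathbf{1}(\cB_i)\le \e n$, and let $J\subseteq I$ be the (small) set of indices where $\cB_i$ does hold. For every $i\in I\setminus J$ the opposite inequality $\sum_{v\in D_i} X^*_v > (1-2\e)|D_i|$ is in force, so summing these anti-diagonal bounds yields
$$\sum_{v\in \llbracket 0,n\rrbracket^2} X^*_v \;\ge\; (1-2\e)\sum_{i\in I\setminus J}|D_i|.$$
Next I would bound $\sum_{i\in I\setminus J}|D_i|$ from below by $(n+1)^2$ minus the two losses: at most $\e n^2/16 + O(n)$ from the anti-diagonals outside $I$, and at most $|J|(n+1)\le \e n(n+1)$ from the indices in $J$. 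This gives $\sum_{i\in I\setminus J}|D_i| \ge (1-17\e/16)n^2 - O(n)$, whence
$$\sum_v X^*_v \;>\; (1-2\e)(1-17\e/16)n^2 - O(n).$$

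Finally I would close the numerical inequality: since $\e<1/16$, one checks $(1-2\e)(1-17\e/16) = 1 - 49\e/16 + O(\e^2)$, and $49\e/16 < 4\e$ trivially (in fact $49/16<4$), so for all $n$ large the right-hand side exceeds $(1-4\e)n^2$, contradicting the event we started on. Hence on that high-probability event we must have $\sum_{i\in I}\mathbf{1}(\cB_i)>\e n$, proving the corollary with the same exponent $c$ as in Proposition \ref{FKG}. There is no substantive obstacle here; the only thing to monitor is that the slack $4\e$ in Proposition \ref{FKG} (the very reason the authors wrote $4\e$ in place of $\e$) is comfortably larger than the combined loss $2\e + 17\e/16$ coming from the $\cB_i$-threshold and the boundary anti-diagonals, which is exactly what makes the pigeonhole close.
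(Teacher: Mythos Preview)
Your argument is correct and follows essentially the same route as the paper's proof: both show the inclusion $\{\sum_{i\in I}\mathbf{1}(\cB_i)\le \e n\}\subseteq\{\sum_v X^*_v > (1-4\e)n^2\}$ by summing the lower bounds $\sum_{v\in D_i}X^*_v>(1-2\e)|D_i|$ over the non-$\cB_i$ anti-diagonals in $I$ and checking the arithmetic closes, then invoke Proposition~\ref{FKG}. Your numerical bookkeeping (separating the $\e n^2/16$ boundary loss from the $\e n^2$ loss due to $J$ and verifying $49/16<4$) is in fact slightly more careful than the paper's.
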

\begin{proof} On the event $\{\sum_{i\in I} \mathbf{1}(\cB_i)< \e n\},$
for $n$ sufficiently large, we have
\begin{align*}
\sum_{v\in \llbracket 0,n \rrbracket ^2}X_v & \ge \sum_{i\in I: \mathbf{1}(\cB_i)=0}|D_i|-2\e n^2 \\
&\stackrel{\eqref{nicint}}{\ge} n^2- \e n^2 -2\e n^2\\
&> (1-4\e)n^2.
\end{align*}
The proof is now complete by Proposition \ref{FKG}.
\end{proof}

The strategy for completing the proof of Theorem \ref{res1} now is to check $\cB_i$'s one by one and control the conditional probability of $\mathcal{C}_{H}$ given a subset of them fails. We need the following lemma ($\delta$ as in Theorem \ref{res1} will be fixed throughout the sequel).

\begin{lemma}
\label{l:conditional} Fix small enough $\e>0$. Then for all large enough $n$ if  $\{i_1, i_2, \ldots, i_k\}\subset I$ be such that for any $1\le j\le k,$  $$\P(\cB_{i_j}\mid \cB^c_{i_1},\ldots,\cB^c_{i_{j-1}},\cL_{\delta})\ge \frac{\e}{8}.$$
Then there exists $H=H(\e)$ sufficiently large, such that for $1\le j\le k,$
$$\P(\mathcal{C}_H\mid \cB_{i_{j}}, \cB^c_{i_1},\ldots,\cB^c_{i_{j-1}},\cL_{\delta})\ge 1-\frac{\e}{2}.$$
\end{lemma}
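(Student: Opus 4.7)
The plan is to apply Bayes' formula, lower-bound the denominator by the hypothesis, and control the numerator by splitting on $\cM_{i_j}$. Write
$$\P(\mathcal{C}_H^c \mid \cB_{i_j}, \cB^c_{i_1}, \ldots, \cB^c_{i_{j-1}}, \cL_\delta) = \frac{\P(\mathcal{C}_H^c \cap \cB_{i_j} \mid \cB^c_{i_1}, \ldots, \cB^c_{i_{j-1}}, \cL_\delta)}{\P(\cB_{i_j} \mid \cB^c_{i_1}, \ldots, \cB^c_{i_{j-1}}, \cL_\delta)}.$$
The denominator is at least $\e/8$ by assumption, so it suffices to show the numerator is at most $\e^2/16$.

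A structural observation drives the rest: since the anti-diagonals $D_{i_1}, \ldots, D_{i_{j-1}}$ are pairwise disjoint from $D_{i_j}$, and $\cM_{i_j}$ is defined through the $\cF_{i_j}$-measurable barriers $R_v$, all of $\cB^c_{i_1}, \ldots, \cB^c_{i_{j-1}}, \cM_{i_j}$ lie in $\cF_{i_j}$. Consequently, any bound of the form $\P(\,\cdot\mid \cF_{i_j}, \cL_\delta) \le \alpha$ valid on an $\cF_{i_j}$-event $\cE$ passes through the outer conditioning by the tower property, yielding $\P(\,\cdot\cap \cE \mid \cB^c_{i_1}, \ldots, \cB^c_{i_{j-1}}, \cL_\delta) \le \alpha$.

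Now split the numerator into $\mathcal{C}_H^c \cap \cB_{i_j} \cap \cM_{i_j}$ and $\mathcal{C}_H^c \cap \cB_{i_j} \cap \cM_{i_j}^c$. On the first piece, drop $\cB_{i_j}$ and invoke Lemma \ref{super} with parameter $\e_1 := \e^2/32$, which fixes $H = H(\e)$: the lemma gives $\P(\mathcal{C}_H^c \mid \cF_{i_j}, \cL_\delta) \le \e_1$ on $\cM_{i_j}$, and the tower observation promotes this to $\P(\mathcal{C}_H^c \cap \cM_{i_j} \mid \cB^c_{i_1}, \ldots, \cB^c_{i_{j-1}}, \cL_\delta) \le \e_1$. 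On the second piece, drop $\mathcal{C}_H^c$ and apply \eqref{rel34}, which gives $\P(\cB_{i_j} \mid \cF_{i_j}, \cL_\delta) \le e^{-c|D_{i_j}|}$ on $\cM_{i_j}^c$; since $|D_{i_j}|$ is linear in $n$ for $i_j \in I$, the tower observation gives $\P(\cB_{i_j} \cap \cM_{i_j}^c \mid \cB^c_{i_1}, \ldots, \cB^c_{i_{j-1}}, \cL_\delta) \le e^{-cn'}$ for some $n' = \Theta(n)$. Taking $n$ large enough that $e^{-cn'} \le \e^2/32$ bounds the numerator by $\e^2/16$, and dividing by the denominator yields the claimed $\e/2$.

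The only delicate point is conceptual: conditioning on $\cB_{i_j}$ would in principle entangle the truncated exponentials on $D_{i_j}$ and invalidate the product-independence used in Lemma \ref{super}. The plan sidesteps this entirely by never carrying $\cB_{i_j}$ into any $\cF_{i_j}$-conditional probability; inside the conditional expectations we drop $\cB_{i_j}$ via the trivial inequality $\P(A \cap \cB_{i_j} \mid \cdot) \le \P(A \mid \cdot)$, paying the price $\e/8$ only once, at the final division step.
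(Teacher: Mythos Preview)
Your proof is correct and, in fact, cleaner than the paper's own argument. Both rest on the same three ingredients: the $\cF_{i_j}$-measurability of $\cB^c_{i_1},\ldots,\cB^c_{i_{j-1}},\cM_{i_j}$ (so that the tower property passes bounds through the outer conditioning), Lemma~\ref{super} bounding $\P(\mathcal C_H^c\mid\cF_{i_j},\cL_\delta)$ on $\cM_{i_j}$, and \eqref{rel34} bounding $\P(\cB_{i_j}\mid\cF_{i_j},\cL_\delta)$ on $\cM_{i_j}^c$.

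The paper takes a more circuitous route: it introduces an auxiliary set $\cS_j=\{\omega_j\in\Upsilon_j:\P(\cB_{i_j}\mid\omega_j,\cL_\delta)\le\e^3\}$ and proves three sub-results (Lemma~\ref{l:cb}, Corollary~\ref{l:cb2}, Lemma~\ref{l:cb3}) to handle $\Upsilon_j\setminus\cS_j$ and $\cS_j$ separately. This split is essentially a proxy for yours, since by \eqref{rel34} one has $\cM_{i_j}^c\subseteq\cS_j$ for $n$ large, and the paper's use of $\cS_j^c$ is only to force $\cM_{i_j}$. Your direct decomposition on $\cM_{i_j}$ versus $\cM_{i_j}^c$ eliminates the intermediate set $\cS_j$ and the associated bookkeeping (in particular Lemma~\ref{l:cb3}), and makes transparent that the hypothesis $\P(\cB_{i_j}\mid\cdots)\ge\e/8$ is used exactly once, to divide out at the end. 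The cost is the same: one application of Lemma~\ref{super} with a polynomially-in-$\e$ small parameter.
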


Proof of Lemma \ref{l:conditional} is somewhat technical so we postpone it for the moment. To apply this lemma we need to demonstrate existence of subsets satisfying the hypothesis; and this is the content of the next lemma.

\begin{lemma}
\label{l:goodb}Fix small enough $\e$. Then for all large enough $n$ there exists a subset $J=\{i_1,i_2,\ldots, i_{k}\}\subseteq I$ such that we have
\begin{enumerate}
\item[(i)] $\P(\cup_{j\in J}\cB_j\mid \cL_{\delta})\ge 1-\frac{\e}{2}.$
\item[(ii)] For any $1\leq j\leq k$, we have $\P(\cB_{i_j}\mid \cB^c_{i_1},\ldots, \cB^c_{i_{j-1}},\cL_{\delta})\ge \frac{\e}{8}$.
\end{enumerate}
\end{lemma}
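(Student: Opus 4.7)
\textbf{Proof plan for Lemma \ref{l:goodb}.} The approach is a greedy selection procedure. We iteratively build the set $J=\{i_1,i_2,\ldots\}$ from $I$ by selecting, at step $j$, any index $i \in I\setminus\{i_1,\ldots,i_{j-1}\}$ for which
$$\P\bigl(\cB_i \,\big|\, \cB^c_{i_1},\ldots,\cB^c_{i_{j-1}},\cL_{\delta}\bigr) \ge \tfrac{\e}{8};$$
if no such $i$ exists, we stop. By construction, condition (ii) is automatically satisfied. It remains to verify condition (i) for the resulting (necessarily finite) set $J=\{i_1,\ldots,i_k\}$.

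Let $A_k := \bigcap_{j=1}^{k}\cB^c_{i_j}$, so that the termination rule yields
$$\P(\cB_i \mid A_k, \cL_{\delta}) < \tfrac{\e}{8} \qquad \text{for all } i \in I\setminus J.$$
By linearity of conditional expectation (noting that $\P(\cB_{i_j}\mid A_k,\cL_{\delta})=0$ for $i_j\in J$ since $A_k\subseteq \cB^c_{i_j}$), and using that $|I|\leq 2n$,
$$\E\!\left[\sum_{i\in I}\mathbf{1}(\cB_i)\,\bigg|\,A_k,\cL_{\delta}\right] \;=\; \sum_{i\in I\setminus J}\P(\cB_i\mid A_k,\cL_{\delta}) \;<\; 2n \cdot \tfrac{\e}{8} \;=\; \tfrac{\e n}{4}.$$
Markov's inequality then gives
$$\P\!\left(\sum_{i\in I}\mathbf{1}(\cB_i) \ge \e n \,\Big|\, A_k,\cL_{\delta}\right) \le \tfrac{1}{4}.$$

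To conclude, suppose for contradiction that $\P(A_k \mid \cL_{\delta}) \ge \tfrac{\e}{2}$. Then
$$\P\!\left(\sum_{i\in I}\mathbf{1}(\cB_i) < \e n \,\Big|\, \cL_{\delta}\right) \;\ge\; \P\!\left(\sum_{i\in I}\mathbf{1}(\cB_i) < \e n \,\Big|\, A_k,\cL_{\delta}\right)\cdot \P(A_k\mid \cL_{\delta}) \;\ge\; \tfrac{3}{4}\cdot \tfrac{\e}{2} \;=\; \tfrac{3\e}{8}.$$
On the other hand, the corollary culminating in \eqref{e:manyb} asserts $\P(\sum_{i\in I}\mathbf{1}(\cB_i) > \e n \mid \cL_{\delta}) \ge 1-e^{-cn}$, so the left-hand side above is at most $e^{-cn}$, which is smaller than $\tfrac{3\e}{8}$ for all sufficiently large $n$. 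This contradiction forces $\P(A_k\mid \cL_{\delta}) < \tfrac{\e}{2}$, equivalently $\P(\cup_{j\in J}\cB_{i_j}\mid \cL_{\delta}) > 1-\tfrac{\e}{2}$, which is (i).

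The main subtlety is ensuring that the two constraints (i) and (ii) are simultaneously compatible: condition (ii) forces us to be selective (rejecting indices whose conditional contribution is negligible), while (i) requires that the selected events still cover most of $\cL_{\delta}$. The key insight making this work is that the termination criterion for the greedy procedure, combined with a Markov bound, produces an upper bound on the count $\sum_i\mathbf{1}(\cB_i)$ conditional on $A_k$ that is inconsistent with the global lower bound \eqref{e:manyb}, unless $A_k$ itself has small conditional probability.
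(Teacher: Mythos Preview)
Your proof is correct and follows essentially the same approach as the paper. The paper frames the construction as choosing a sequence $J_*\in\mathcal{J}$ maximizing $\P(\cup_{j\in J}\cB_j\mid\cL_\delta)$ rather than building $J$ greedily, but the effect is identical: both yield a $J$ satisfying (ii) for which no further index can be appended, and from there the Markov bound and contradiction with \eqref{e:manyb} proceed exactly as you wrote.
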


We shall postpone this proof too for the moment and complete the proof of Theorem \ref{res1} first.

\begin{proof}[Proof of Theorem \ref{res1}]
Fix $\e>0$ and let $H$ be sufficiently large so that the conclusion of Lemma \ref{l:conditional} holds and let $J\subseteq I$ be such that the conclusion of Lemma \ref{l:goodb} holds. Clearly,
\begin{align*}
\P(\mathcal{C}_H\mid\cL_{\delta})&\ge \sum_{j=1}^{k}\P(\mathcal{C}_{H}\cap\cB_{i_j}\cap \cB^c_{i_1},\cap \cdots,\cB^c_{i_{j-1}}\mid \cL_{\delta}),\\
&\ge \sum_{j=1}^{k}\P(\mathcal{C}_{H}\mid \cB_{i_j}\cap \cB^c_{i_1}\cap \cdots,\cB^c_{i_{j-1}} ,\cL_{\delta})\P(\cB_{i_j}\cap \cB^c_{i_1}\cap \ldots,\cB^c_{i_{j-1}}\mid \cL_{\delta}),\\
&\ge (1-\frac{\e}{2})\P(\cup_{i}\cB_i\mid \cL_{\delta}),\\
&\ge 1-\e,
\end{align*}
where in the last two inequalities we have used Lemma \ref{l:conditional} and Lemma \ref{l:goodb}. This completes the proof of the theorem.
\end{proof}

\bigskip

It remains to prove Lemma \ref{l:conditional} and Lemma \ref{l:goodb}. We start with the proof of Lemma \ref{l:goodb}.

\begin{proof}[Proof of Lemma \ref{l:goodb}]
We construct the set $J$ in the following manner. Let $\mathcal{J}$ denote the collection of all ordered sequences $J=\{j_1, j_2, \ldots j_{\ell}\}$ contained in $I$ such that,  for any $1\leq k \leq \ell$, we have $$\P(\cB_{j_k}\mid \cB^c_{j_1},\ldots, \cB^c_{j_{k-1}},\cL_{\delta})\ge \frac{\e}{8}.$$ Let $J_*=\{i_1,i_2,\ldots, i_{k}\}$ be the sequence in $\mathcal{J}$ that maximizes $\P(\cup_{j\in J} \cB_{j}\mid \cL_{\delta})$. Suppose now by way of contradiction that  $\P(\cup_{j\in J_*} \cB_{j}\mid \cL_{\delta}) <1-\frac{\e}{2}$.  The maximality assumption on $J_*$ implies that for any $i\notin J_*$ we have
\begin{align*}
\P(\cB_{i}\mid \bigcap_{j\in J^*} \cB_{j}^{c}, \cL_{\delta}) & < \frac{\e}{8} \text{ which implies ,}\\
\E\biggl[\sum_{i\in I}\mathbf{1}(\cB_i)\mid \bigcap_{j\in J^*} \cB_{j}^{c}, \cL_{\delta}\biggr]&\le \frac{\e n}{4},
\end{align*}
where the last inequality follows since $|I|\le 2n$ (see \eqref{nicint}).
Markov inequality now implies
$$\P\biggl[\sum_{i\in I}\mathbf{1}(\cB_i) \geq \e n \mid \bigcap_{j\in J^*} \cB_{j}^{c}, \cL_{\delta}\biggr]\le \frac{1}{4}.$$
Finally we conclude,
$$\P\biggl[\sum_{i\in I}\mathbf{1}(\cB_i) < \e n \mid \cL_{\delta}\biggr]\geq \frac{3}{4}\P(\bigcap_{j\in J_*} \cB_{j}^{c}\mid \cL_{\delta})> \frac{3\e}{8},$$
which contradicts \eqref{e:manyb}. This completes the proof of the lemma.
\end{proof}

Finally we move towards the proof of Lemma \ref{l:conditional}. We first need to set up some more notation.
It would be useful to have explicit notation for the sample space and its various projections. Let $\Omega=[0,\infty)^{\llbracket 0,n \rrbracket^2}$ be the product space on which the product measure of exponential vertex weights live. Fix $i_1,i_2,\ldots i_{k}$ such that the hypothesis of Lemma \ref{l:conditional} holds, i.e.,  for any $1\le j\le k$,
\begin{equation}\label{lb567}
\P(\cB_{i_j}\mid \cB^c_{i_1},\ldots,\cB^c_{i_{j-1}},\cL_{\delta})\ge \frac{\e}{8}.
\end{equation}
For $\omega\in \Omega$, let $\omega_{j}$ denote the projection of $\omega$ onto the co-ordinates $\llbracket 0,n \rrbracket^2 \setminus D_{i_j}$, i.e., the collection of weights of all the vertices except on the anti-diagonal $D_{i_{j}}$. Let $\Omega_{j}$ denote the set of all $\omega_{j}$'s. Throughout the sequel for brevity we will naturally identify subsets of $\Omega_{j}$ with their pre-image in $\Omega$ under the projection map.  Notice that, whether $\omega\in \cB_i^c$ or not, for $i\neq i_j$ is a deterministic function of $\omega_j$.
To improve transparency, we shall break the argument into a number of short lemmas.

Let  $\Upsilon_j\subset \Omega_{j}$ be the set of all $\omega_j$'s such that $\mathbf{1}(\cB^c_i(\omega_j))=1$ for $i=i_1,\ldots, i_{j-1}$.
Further, let $\cS_j\subset \Upsilon_j$ be the set of all $\omega_j$ such that
\begin{equation}\label{eq1}
\P(\cB_{i_j}\mid \omega_j, \cL_{\delta}) \le \e^3,
\end{equation}

We now have the following lemma.
\begin{lemma}
\label{l:cb}
In the above set-up, there exists $H$ sufficiently large depending only on $\e$, such that for any $\omega_j\in \Upsilon_j \setminus \cS_j$ we have
\begin{equation}
\label{e:cb1}
\P(C_H\cap \cB_{i_j}\mid \omega_j, \cL_{\delta})\ge \P(\cB_{i_j}\mid \omega_j, \cL_{\delta})(1-\e^{2}).
\end{equation}
\end{lemma}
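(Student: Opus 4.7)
The plan is to dichotomize based on whether the $\cF_{i_j}$-measurable event $\cM_{i_j}$ occurs on $\omega_j$. I claim that the assumption $\omega_j\in \Upsilon_j\setminus \cS_j$ actually forces $\omega_j\in \cM_{i_j}$ (for $n$ large). Once that is established, the conclusion follows from Lemma~\ref{super} and some elementary arithmetic, choosing the parameter $\e_1$ in that lemma to be a sufficiently small power of $\e$ (specifically $\e_1=\e^5$).

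First, suppose for contradiction that $\omega_j\in \cM_{i_j}^c$. Since $\cM_{i_j}$ is $\cF_{i_j}$-measurable and $\omega_j$ generates $\cF_{i_j}$, this is a statement about the realization $\omega_j$ itself. Then by Lemma~\ref{l:distexcess}, conditional on $\omega_j$ and $\cL_\delta$ the variables $\{X_v:v\in D_{i_j}\}$ are independent truncated exponentials ${\rm Exp}(0,R_v)$; applying Lemma~\ref{l:trunc} as in the proof of Lemma~\ref{l:bmest} (see~\eqref{rel34}) gives
\[
\P(\cB_{i_j}\mid \omega_j,\cL_\delta)\le e^{-c|D_{i_j}|}.
\]
Since $i_j\in I$, we have $|D_{i_j}|\geq c'n$ for a constant $c'>0$ depending on $\e$, so the right-hand side is at most $\e^3$ for all sufficiently large $n$. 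But this means $\omega_j\in \cS_j$, contradicting $\omega_j\in \Upsilon_j\setminus \cS_j$. Hence $\omega_j\in \cM_{i_j}$.

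Now apply Lemma~\ref{super} with $\e_1:=\e^5$: there exists $H=H(\e)$ such that for every $\omega_j$ with $\cM_{i_j}(\omega_j)=1$,
\[
\P(\mathcal{C}_H\mid \omega_j,\cL_\delta)\ge 1-\e^5.
\]
Therefore, using the case just established and the assumption $\omega_j\notin \cS_j$, i.e., $\P(\cB_{i_j}\mid \omega_j,\cL_\delta)>\e^3$, we get
\begin{align*}
\P(\mathcal{C}_H\cap \cB_{i_j}\mid \omega_j,\cL_\delta)
&\ge \P(\cB_{i_j}\mid \omega_j,\cL_\delta)-\P(\mathcal{C}_H^c\mid \omega_j,\cL_\delta) \\
&\ge \P(\cB_{i_j}\mid \omega_j,\cL_\delta)-\e^5 \\
&\ge \P(\cB_{i_j}\mid \omega_j,\cL_\delta)\bigl(1-\e^5/\e^3\bigr)
= \P(\cB_{i_j}\mid \omega_j,\cL_\delta)(1-\e^2),
\end{align*}
which is precisely \eqref{e:cb1}. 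The main (only) conceptual point is the dichotomy in the first step: because $\cM_{i_j}$ is measurable with respect to $\omega_j$, if it failed then $\cB_{i_j}$ itself would be exponentially unlikely, preventing $\omega_j$ from lying outside $\cS_j$. Everything else is a direct invocation of results already proved, so no new technical obstacle is expected here; one only needs to be careful that the constants produced by Lemma~\ref{super} depend only on $\e$ (via $\e_1=\e^5$), consistent with the claim that $H$ depends only on $\e$.
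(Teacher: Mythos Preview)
Your proof is correct and essentially identical to the paper's: both use the definition of $\cS_j$ together with \eqref{rel34} to force $\cM_{i_j}(\omega_j)=1$, then invoke Lemma~\ref{super} with $\e_1=\e^5$ and perform the same arithmetic $\P(\cB_{i_j}\mid\omega_j,\cL_\delta)-\e^5\ge \P(\cB_{i_j}\mid\omega_j,\cL_\delta)(1-\e^2)$. The only cosmetic difference is that you phrase the first step as a proof by contradiction while the paper states it directly.
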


\begin{proof}
For every $\omega_j \in \Upsilon_j \setminus \cS_j$ by definition
$$\P(\cB_{i_j}\mid \omega_j, \cL_{\delta}) \ge \e^{3}.$$
By \eqref{rel34} this implies $\mathbf{1}(\cM_{i_j}(\omega_{j}))=1$ (notice that $\mathbf{1}(\cM_{i_j})$ is also a deterministic function of $\omega_j$).
Now by Lemma \ref{super} (applied with $\e_1$ replaced by $\e^{5}$) it follows that there exists $H$ sufficiently large such that for any $\omega_j\in \Upsilon_j \setminus \cS_j$
$$\P(C_H\mid \omega_j, \cL_{\delta}) \ge 1-\e^5.$$
Thus
\begin{align*} \P(C_H\cap \cB_{i_j}\mid \omega_j, \cL_{\delta})&\ge \P(\cB_{i_j}\mid \omega_j, \cL_{\delta})- \P(C^c_H\mid \omega_j, \cL_{\delta})\\
& \ge \P(\cB_{i_j}\mid \omega_j, \cL_{\delta})(1-\frac{\P(C^c_H\mid \omega_j, \cL_{\delta})}{\P(\cB_{i_j}\mid \omega_j, \cL_{\delta})})\\
& \ge \P(\cB_{i_j}\mid \omega_j, \cL_{\delta})(1-\frac{\e^5}{\e^3})\\
&=\P(\cB_{i_j}\mid \omega_j, \cL_{\delta})(1-\e^2).
\end{align*}
\end{proof}

Let $H$ be fixed for the remainder of this section such that the conclusion of Lemma \ref{l:cb} holds. We also have the following corollary of Lemma \ref{l:cb}.

\begin{corollary}
\label{l:cb2}
In the set-up of Lemma \ref{l:cb}, we have
\begin{equation}
\label{e:cb2}
\P(C_H\cap \cB_{i_j}\cap (\Upsilon_j\setminus \cS_j)\mid \cL_{\delta}) \ge (1-\e^{2})\P(\cB_{i_j}\cap (\Upsilon_j\setminus \cS_j)\mid  \cL_{\delta})
\end{equation}
\end{corollary}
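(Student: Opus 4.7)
The plan is to recognize that Corollary~\ref{l:cb2} is simply the integrated form of the pointwise inequality in Lemma~\ref{l:cb}, obtained by averaging over $\omega_j \in \Upsilon_j \setminus \cS_j$. The key observation enabling this step is that the event $\Upsilon_j \setminus \cS_j$ is, by construction, a deterministic function of $\omega_j$ (that is, it lies in the $\sigma$-algebra generated by $\{X_v : v \notin D_{i_j}\}$), since both the indicators $\mathbf{1}(\cB^c_{i_k})$ for $k < j$ and the conditional probability $\P(\cB_{i_j}\mid \omega_j,\cL_\delta)$ are explicit functions of $\omega_j$.

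First I would write the left-hand side of \eqref{e:cb2} using the tower property, decomposing over the value of $\omega_j$ under the measure $\P(\cdot \mid \cL_\delta)$:
\begin{equation*}
\P(C_H\cap \cB_{i_j}\cap (\Upsilon_j\setminus \cS_j)\mid \cL_{\delta}) = \E\bigl[\mathbf{1}(\Upsilon_j\setminus\cS_j)\,\P(C_H\cap \cB_{i_j}\mid \omega_j,\cL_\delta)\,\big|\,\cL_\delta\bigr].
\end{equation*}
Next, for every $\omega_j \in \Upsilon_j \setminus \cS_j$, I would invoke Lemma~\ref{l:cb} to replace the inner conditional probability by $(1-\e^2)\,\P(\cB_{i_j}\mid \omega_j,\cL_\delta)$, pulling the factor $(1-\e^2)$ outside the expectation. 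Finally, reassembling the remaining expectation via the tower property gives $(1-\e^2)\,\P(\cB_{i_j}\cap(\Upsilon_j\setminus\cS_j)\mid \cL_\delta)$, which is the desired right-hand side.

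There is essentially no serious obstacle here: the statement is a soft consequence of Lemma~\ref{l:cb}, and the only thing to verify carefully is the measurability of $\Upsilon_j\setminus\cS_j$ with respect to $\omega_j$, which is immediate from the definitions (in particular $\cS_j$ is defined via a conditional probability that depends only on $\omega_j$). The hardest conceptual work was already done in proving Lemma~\ref{l:cb}; this corollary just packages that bound in a form that will later be summed over $j = 1,\dots,k$ to produce the telescoping estimate needed for Lemma~\ref{l:conditional}.
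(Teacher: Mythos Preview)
Your proposal is correct and matches the paper's own argument exactly: the paper's proof is the single sentence ``integrate both sides of \eqref{e:cb1} over $\omega_j\in \Upsilon_j\setminus\cS_j$ with respect to the conditional density given $\cL_\delta$,'' which is precisely the tower-property computation you wrote out. Your explicit remark that $\Upsilon_j\setminus\cS_j$ is $\omega_j$-measurable is the only point worth checking, and you handled it correctly.
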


\begin{proof}
The proof follows by integrating both sides of \eqref{e:cb1} over $\omega_j\in \Upsilon_j \setminus \cS_j$ (with respect to the conditional density given $\cL_{\delta}$).
\end{proof}

The final piece of the proof of Lemma \ref{l:conditional} is provided by the next lemma.

\begin{lemma}
\label{l:cb3}
In the set-up of Lemma \ref{l:cb} we have,
\begin{equation}
\label{e:cb3}
\P(\cB_{i_j}\cap (\Upsilon_j\setminus \cS_j)\mid \cL_{\delta})\ge (1-\e^2)\P(\cB_{i_j}\cap \Upsilon_j\mid \cL_{\delta}).
\end{equation}
\end{lemma}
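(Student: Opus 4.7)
The plan is to prove \eqref{e:cb3} by a direct computation showing that the set $\cS_j$ contributes negligibly to $\P(\cB_{i_j}\cap \Upsilon_j \mid \cL_\delta)$. Since the inequality can be rewritten as
\[
\P(\cB_{i_j}\cap \cS_j \mid \cL_\delta) \le \e^2\, \P(\cB_{i_j}\cap \Upsilon_j \mid \cL_\delta),
\]
it suffices to upper-bound the left-hand side and lower-bound the right-hand side separately.

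For the upper bound, I would disintegrate with respect to the conditional law of $\omega_j$ given $\cL_\delta$. Since $\cS_j$ is by definition a subset of $\Upsilon_j$ on which $\P(\cB_{i_j}\mid \omega_j, \cL_\delta) \le \e^3$, integrating against the conditional density of $\omega_j$ gives
\[
\P(\cB_{i_j}\cap \cS_j \mid \cL_\delta) \;=\; \int_{\cS_j} \P(\cB_{i_j}\mid \omega_j, \cL_\delta)\, d\mu(\omega_j) \;\le\; \e^3\, \P(\cS_j\mid \cL_\delta) \;\le\; \e^3\, \P(\Upsilon_j\mid \cL_\delta),
\]
using $\cS_j \subseteq \Upsilon_j$ in the last step.

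For the lower bound, I would invoke the standing hypothesis \eqref{lb567} of Lemma~\ref{l:conditional}, which, recalling $\Upsilon_j = \cB^c_{i_1}\cap\cdots\cap\cB^c_{i_{j-1}}$, reads $\P(\cB_{i_j}\mid \Upsilon_j,\cL_\delta) \ge \e/8$. Multiplying by $\P(\Upsilon_j\mid \cL_\delta)$ yields
\[
\P(\cB_{i_j}\cap \Upsilon_j\mid \cL_\delta) \;\ge\; \tfrac{\e}{8}\, \P(\Upsilon_j\mid \cL_\delta).
\]
Taking the ratio of the two bounds gives $\P(\cB_{i_j}\cap \cS_j\mid \cL_\delta) \le 8\e^2\, \P(\cB_{i_j}\cap \Upsilon_j\mid \cL_\delta)$, which matches \eqref{e:cb3} up to the absolute constant $8$ (readily absorbed by choosing the threshold in the definition of $\cS_j$ to be $\e^3/8$ rather than $\e^3$, or by rescaling $\e$ at the top of the argument).

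No real obstacle is expected here: the whole content is bookkeeping that combines the definition of $\cS_j$ with hypothesis \eqref{lb567}. The only subtlety is keeping the constants straight, which is why the threshold $\e^3$ used to define $\cS_j$ must be small enough compared to the lower bound $\e/8$ supplied by the hypothesis of Lemma~\ref{l:conditional}; this is exactly what makes the ``bad'' set $\cS_j$ a vanishing fraction of $\Upsilon_j$ even after weighting by the conditional probability of $\cB_{i_j}$.
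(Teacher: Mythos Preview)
Your proposal is correct and follows essentially the same route as the paper: both disintegrate over $\omega_j$ to bound $\P(\cB_{i_j}\cap \cS_j\mid \cL_\delta)\le \e^3\,\P(\Upsilon_j\mid \cL_\delta)$ using the defining inequality \eqref{eq1} of $\cS_j$, and both invoke hypothesis \eqref{lb567} to get $\P(\cB_{i_j}\cap \Upsilon_j\mid \cL_\delta)\ge \tfrac{\e}{8}\,\P(\Upsilon_j\mid \cL_\delta)$, arriving at the constant $8\e^2$. Indeed the paper's own proof also yields $(1-8\e^2)$ rather than the $(1-\e^2)$ written in the statement, and uses $(1-8\e^2)$ in the subsequent application, so your observation about the constant is on point.
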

\begin{proof}
By definition of $\Upsilon_{j}$, and \eqref{lb567}, we have
$$\P(\cB_{i_j}\cap \Upsilon_j\mid \cL_{\delta})\ge \frac{\e}{8}\P(\Upsilon_j\mid \cL_{\delta}).$$
Using this and \eqref{eq1} we have
\begin{align*}
\P(\cB_{i_j}\cap (\Upsilon_j\setminus \cS_j)\mid \cL_{\delta})&= \P(\cB_{i_j}\cap \Upsilon_j\mid \cL_{\delta})-\P(\cB_{i_j}\cap \cS_j\mid \cL_{\delta})\\
&\ge \P(\cB_{i_j}\cap \Upsilon_j\mid \cL_{\delta})-\int_{\omega_j\in \cS_j}\P(\cB_{i_j}\mid \omega_j,\cL_{\delta}) {\rm d}\P(\omega_j\mid \cL_{\delta})\\
&\stackrel{\eqref{eq1}}{\ge} \P(\cB_{i_j}\cap \Upsilon_j\mid \cL_{\delta})-\int_{\omega_j\in \cS_j}\e^{3}{\rm d}\P(\omega_j\mid \cL_{\delta})\\
&\ge \P(\cB_{i_j}\cap \Upsilon_j\mid \cL_{\delta})-\e^{2}\P(\cS_j\mid \cL_{\delta})\\
&\ge \P(\cB_{i_j}\cap \Upsilon_j\mid \cL_{\delta})-\e^{3}\P(\Upsilon_j\mid \cL_{\delta})\\
&\ge (1-8\e^2)\P(\cB_{i_j}\cap \Upsilon_j \mid \cL_{\delta}),
\end{align*}
completing the proof of the lemma.
\end{proof}

Proof of Lemma \ref{l:conditional} is now immediate.

\begin{proof}[Proof of Lemma \ref{l:conditional}]
From \eqref{e:cb2} and \eqref{e:cb3} we deduce that
\begin{align*}
\P(C_H\cap \cB_{i_j}\cap \Upsilon_j\mid \cL_{\delta}) &\ge (1-\e^{2})(1-8\e^{2})\P(\cB_{i_j}\cap \Upsilon_j\mid  \cL_{\delta}).
\end{align*}
Thus for all small enough $\e$ we get,
$$\P(C_H\mid \cB_{i_j}, \Upsilon_j, \cL_{\delta})\ge 1-\e/2,$$
as required.
\end{proof}

\section{Anti-Concentration for restricted paths in conditional environment}
\label{s:ac}

Our objective in this section is to prove Theorem \ref{res2}. Recall that $\delta\in (0,4)$ is fixed as before. Let us fix $H>0$ and $\e_2>0$. Also fix $A\subseteq \llbracket 0,n \rrbracket ^2$ which is connected and contains both $\mathbf{0}$ and $\mathbf{n}$ with $|A|\leq \e' n^2$.

As mentioned before it is convenient to use a standard decoupling property of a collection of i.i.d.\ exponential variables. Recall the following well-known fact.

\begin{fact}\label{fact1}
Let $X_1, X_2, \ldots, X_{m}$ be i.i.d.\ ${\rm Exp}(1)$ variables. Let $Z=\sum_{i=1}^m X_i$ and $Y_i= \frac{X_i}{Z}$. Then $Z\sim{\rm Gamma}(m)$ with density proportional to $e^{-t}t^{m-1}$ on $[0,\infty)$ and is independent of the random vector $(Y_1,\ldots, Y_m)$ having distribution $F_{m}$. As a matter of fact, the distribution $F_{m}$ is also explicitly known (Dirichlet distribution which is the uniform distribution on the unit positive simplex $\mathcal{S}_m$ of dimension $m-1,$ i.e., $$\mathcal{S}_m=\{(y_1,y_2,\ldots,y_m): y_i\ge 0\,\,\, \forall\,\, 1\le i\le m, \sum_{i=1}^m y_i=1\}.$$ However the latter fact  will not be important for us.
\end{fact}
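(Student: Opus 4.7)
The plan is to derive all three assertions in one stroke by a change of variables on $(0,\infty)^m$. Since $\sum_i Y_i = 1$, one coordinate of $(Y_1,\ldots,Y_m)$ is redundant, so I would work with the bijection $\Phi: (X_1,\ldots,X_m) \mapsto (Z, Y_1, \ldots, Y_{m-1})$ from $(0,\infty)^m$ onto $(0,\infty) \times \Delta$, where $\Delta = \{(y_1,\ldots,y_{m-1}) : y_i > 0,\; \sum_{i<m} y_i < 1\}$ and $Y_m$ is recovered as $1 - \sum_{i<m} Y_i$. The inverse is $X_i = Z Y_i$ for $i < m$ and $X_m = Z(1 - \sum_{i<m} Y_i)$, which is manifestly smooth with smooth inverse on the stated open domains.

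Next I would compute the Jacobian of $\Phi^{-1}$. For instance, adding the last $m-1$ rows of the Jacobian matrix to the first row and expanding shows that the absolute value of $\det D\Phi^{-1}$ equals $z^{m-1}$. Since the joint density of $(X_1,\ldots,X_m)$ under the product of $m$ independent $\mathrm{Exp}(1)$ laws is $\exp(-\sum_i X_i) = e^{-Z}$, the push-forward density on $(0,\infty) \times \Delta$ becomes
$$f(z, y_1, \ldots, y_{m-1}) = z^{m-1} e^{-z}.$$
This factorizes as a function of $z$ alone times the constant $1$ on $\Delta$. From this factorization I would read off all three conclusions at once: the independence of $Z$ from $(Y_1,\ldots,Y_m)$; the identification of the $z$-marginal, after dividing by $\Gamma(m) = (m-1)!$, as the $\mathrm{Gamma}(m)$ density; and the identification of the $(y_1,\ldots,y_{m-1})$-marginal as the uniform measure on $\Delta$, which, when lifted back through $Y_m = 1 - \sum_{i<m} Y_i$, is precisely the Dirichlet law on $\mathcal{S}_m$.

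There is no genuine obstacle here, as the statement is classical and the argument above is entirely routine; the only mildly delicate step is the Jacobian computation, which one can also verify by observing that $\Phi^{-1}$ restricted to each slice $\{Z = z\}$ is the dilation by $z$ of the simplex, which carries $(m-1)$-dimensional Lebesgue measure to $z^{m-1}$ times itself. Since only the Gamma marginal of $Z$ and its independence from $(Y_1,\ldots,Y_m)$ are actually used in the sequel, as a sanity check one could alternatively obtain the Gamma marginal via convolution of exponentials and verify independence by computing the joint moment generating function $\E[e^{tZ} g(Y_1,\ldots,Y_m)]$ and exploiting the scale-invariance of the ratios $Y_i$; but the density computation above delivers everything simultaneously.
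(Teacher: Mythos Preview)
Your argument is correct and is the standard textbook derivation via change of variables. The paper, however, does not prove this statement at all: it is introduced with ``Recall the following well-known fact'' and is simply invoked without justification. So there is nothing to compare against; you have supplied a proof where the paper relies on the reader's prior knowledge.
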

The above representation allows us sample the distribution of $\{X_{v}:v\in A\}$ conditional on $\cL_{\delta}$ in the following way:
We think of the field $\mathbf{X}=(X_v: v \in \llbracket 0,n \rrbracket^2)=:(\mathbf{X}_{A^c},\mathbf{X}_{A})$ as the triple $(\mathbf{X}_{A^c}, \mathbf{Y}_{A}, Z_{A})$
where $\mathbf{X}_{A^c}:=(X_v:v \in A^c),$ and similarly $\mathbf{X}_{A}:=(X_v:v \in A),$ $Z_{A}:=\sum_{v \in A} X_v,$  and $\mathbf{Y}_A:=(Y_v: v \in A)$ where $Y_v=\frac{X_v}{Z_A}.$
Thus $\mathbf{Y}_{A}$ is an element of $\mathcal{S}_{|A|}$ and $Z_{A}\sim {\rm Gamma}(|A|)$ and by the above fact, all the elements of the triple $(\mathbf{X}_{A^c}, \mathbf{Y}_{A}, Z_{A})$ are independent of each other.

\begin{lemma}
\label{l:sample}For all $(\mathbf{x}_{A^c}, \mathbf{y}_{A})$ (compatible with $\cL_{\delta}$), i.e.,  there exists some $z_{A}>0,$ with $(\mathbf{x}_{A^c}, \mathbf{y}_{A},z_A)\in \cL_{\delta},$ we have the following:
conditional on $\mathbf{Y}_{A}=\mathbf{y}_{A},\mathbf{X}_{A^c}=\mathbf{x}_{A^c}$ and the event $\cL_{\delta},$ the random variable $Z_A$ is distributed as a ${\rm Gamma}(|A|)$ variable conditioned to be in $[0,\theta_{\max}]$ for some constant $\theta_{\max}=\theta_{\max}(\mathbf{X}_{A^c},\mathbf{Y}_{A})$. Moreover
$\theta_{\max}\le \frac{(4-\delta)n}{L_n(A;\mathbf{Y}_A)},$ where $L_n(A;\mathbf{Y}_A)$ denotes the weight of the maximum weight path from $\mathbf{0}$ to $\mathbf{n}$ restricted to lie inside $A$ and passing through the environment $\mathbf{Y}_{A}.$
\end{lemma}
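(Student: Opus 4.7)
The plan is to translate the event $\cL_{\delta}=\{L_n\le(4-\delta)n\}$, after conditioning on $\mathbf{X}_{A^c}=\mathbf{x}_{A^c}$ and $\mathbf{Y}_A=\mathbf{y}_A$, into a deterministic upper-bound constraint on the remaining random variable $Z_A$, and then invoke Fact \ref{fact1} to read off the conditional law.

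First, I would start with the decomposition $X_v=Z_A\, Y_v$ for $v\in A$. For any directed path $\gamma$ from $\mathbf{0}$ to $\mathbf{n}$, the length splits as
$$L(\gamma)=\sum_{v\in \gamma\cap A^c} X_v + Z_A\sum_{v\in \gamma\cap A}Y_v.$$
Once we freeze $\mathbf{x}_{A^c}$ and $\mathbf{y}_A$, the first sum and the coefficient $\sum_{v\in\gamma\cap A}Y_v$ are both deterministic. Hence the requirement $L(\gamma)\le(4-\delta)n$ is either (i) vacuous/determined when $\gamma\cap A=\emptyset$, which by the assumed compatibility of $(\mathbf{x}_{A^c},\mathbf{y}_A)$ with $\cL_{\delta}$ is automatically satisfied, or (ii) a linear upper bound of the form
$$Z_A\;\le\; c_\gamma(\mathbf{x}_{A^c},\mathbf{y}_A):=\frac{(4-\delta)n-\sum_{v\in\gamma\cap A^c} x_v}{\sum_{v\in\gamma\cap A} y_v}.$$
Taking the intersection of all such constraints over directed paths $\gamma$ from $\mathbf{0}$ to $\mathbf{n}$, we obtain that, given $(\mathbf{X}_{A^c},\mathbf{Y}_A)=(\mathbf{x}_{A^c},\mathbf{y}_A)$, the event $\cL_{\delta}$ is exactly equivalent to $Z_A\le \theta_{\max}$, where $\theta_{\max}:=\inf_{\gamma}c_\gamma$ is a measurable function of $(\mathbf{X}_{A^c},\mathbf{Y}_A)$ only.

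Next, by Fact \ref{fact1} the triple $(\mathbf{X}_{A^c},\mathbf{Y}_A,Z_A)$ consists of three mutually independent components with $Z_A\sim\mathrm{Gamma}(|A|)$. Conditioning an independent triple on a measurable event of the first two coordinates plus an upper-bound event on $Z_A$ yields the product of the original conditional law of $(\mathbf{X}_{A^c},\mathbf{Y}_A)$ (restricted to compatible configurations) with the law of $Z_A$ truncated to $[0,\theta_{\max}]$. This immediately gives the first conclusion: conditional on $\mathbf{X}_{A^c}=\mathbf{x}_{A^c}$, $\mathbf{Y}_A=\mathbf{y}_A$ and $\cL_{\delta}$, the variable $Z_A$ is distributed as $\mathrm{Gamma}(|A|)$ truncated to $[0,\theta_{\max}]$.

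Finally, to obtain the claimed quantitative bound on $\theta_{\max}$, I would just plug in one specific test path. Let $\gamma^*\subseteq A$ be the path from $\mathbf{0}$ to $\mathbf{n}$ achieving $L_n(A;\mathbf{Y}_A)=\max_{\gamma\subseteq A}\sum_{v\in\gamma}Y_v$; this exists because $A$ contains $\mathbf{0}$ and $\mathbf{n}$ and is connected in the relevant directed sense (otherwise $L_n(A;\mathbf{Y}_A)$ is $0$ and the bound is vacuous). Since $\gamma^*\cap A^c=\emptyset$, the constraint from $\gamma^*$ reads
$$Z_A\cdot L_n(A;\mathbf{Y}_A)=L(\gamma^*)\;\le\;(4-\delta)n,$$
so $c_{\gamma^*}=\frac{(4-\delta)n}{L_n(A;\mathbf{Y}_A)}$, and therefore $\theta_{\max}\le c_{\gamma^*}=\frac{(4-\delta)n}{L_n(A;\mathbf{Y}_A)}$, as required. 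No step is really an obstacle; the only thing to be careful about is the bookkeeping between the constrained and unconstrained probability spaces, which is handled cleanly by the independence in Fact \ref{fact1}.
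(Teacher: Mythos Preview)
Your proposal is correct and follows essentially the same approach as the paper: the paper's proof simply observes that, with $(\mathbf{x}_{A^c},\mathbf{y}_A)$ fixed, $L_n$ is a non-decreasing continuous function of $Z_A$ (which is exactly what your path-by-path decomposition makes explicit), so $\cL_\delta$ becomes $\{Z_A\le\theta_{\max}\}$, and then the bound on $\theta_{\max}$ is obtained by plugging in the value $Z_A=(4-\delta)n/L_n(A;\mathbf{Y}_A)$, i.e., testing against the optimal path inside $A$, just as you do. Your write-up is a faithful and more detailed expansion of the paper's terse argument.
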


\begin{proof}
 The proof follows by the observation that fixing $\mathbf{Y}_{A}=\mathbf{y}_{A},\mathbf{X}_{A^c}=\mathbf{x}_{A^c},$ $L_n$ is a non-decreasing continuous function of $Z_A$ and hence the event $\cL_{\delta}$ forces $Z_{A}$ to be less than some constant $\theta_{\max}.$
Since choosing $Z_A=\frac{(4-\delta)n}{L_n(A;\mathbf{Y}_A)}$ forces $L_{n}\ge L_{n}(A)=(4-\delta)n$, the stated upper bound on $\theta_{\max}$ follows.
\end{proof}
We are now ready to prove Theorem \ref{res2}.

\begin{proof}[Proof of Theorem \ref{res2}]
Let $s=\frac{(4-\delta)n-\frac{H}{n}}{(4-\delta)n}=1-\frac{H}{(4-\delta)n^2}$. Now by Lemma \ref{l:sample}, it follows that
\begin{equation}
\label{e:resA}
\P\left(L_n(A) \ge (4-\delta)n -\frac{H}{n}\mid \cL_{\delta}, \mathbf{Y}_{A},\mathbf{X}_{A^c} \right)\le  \E \biggl[ \P(Z_{A}\geq s\theta_{\max} \mid \cL_{\delta}, \mathbf{Y}_{A},\mathbf{X}_{A^c} ) \biggr]
\end{equation}
where the expectation in the right hand side above is over the distribution of $\mathbf{Y}_{A},\mathbf{X}_{A^c}$ conditional on $\cL_{\delta}$.
To see this notice that $L_{n}(A)=L_n(A;\mathbf{Y}_{A})Z_A$ and hence if
$Z_{A}< s\theta_{\max},$ then $$L_{n}(A)< L_n(A;\mathbf{Y}_{A})s\theta_{\max} \le  L_n(A;\mathbf{Y}_{A})s\frac{(4-\delta)n}{L_n(A;\mathbf{Y}_A)}<  (4-\delta)n -\frac{H}{n}.$$
Using Lemma \ref{l:sample} and the density of Gamma distribution it follows that
\begin{align*}
\P(Z_A \ge s\theta_{\max}\mid \{Y_v\}) &=\dfrac{\int_{s{\theta_{\max}}}^{\theta_{\max}}e^{-t}t^{|A|-1}~{\rm d}t }{\int_{0}^{\theta_{\max}}e^{-t}t^{|A|-1}~{\rm d}t }\le \dfrac{\int_{s{\theta_{\max}}}^{\theta_{\max}}e^{-t}t^{|A|-1}~{\rm d}t }{\int_{0}^{s\theta_{\max}}e^{-t}t^{|A|-1}~{\rm d}t } .
\end{align*}
Doing the change of variable $t\mapsto \frac{t}{\theta_{\max}}$ we get that the above is upper bounded by
\begin{align*}
\frac{e^{-s\theta_{\max}}\int_{s}^{1} t^{|A|-1}~{\rm d}t}{e^{-s\theta_{\max}}\int_{0}^{s} t^{|A|-1}~{\rm d}t} &\leq \frac{(1-s^{|A|})}{s^{|A|}} \leq \biggl(e^{\frac{2H\e'}{4-\delta}}-1\biggr)=O(\e').
\end{align*}
where the final inequality follows by taking $n$ large enough, substituting the value of $s$ and using the inequality $(1+x) \le e^x$.
\end{proof}

\section{Delocalization in Poissonian LPP}
\label{s:poi}
In this section, we consider the question of delocalization in the setting of Poissonian last passage percolation on the plane and prove Theorem \ref{informal1}.
We begin by giving a short description of the standard set up of Poissonian directed last passage percolation on the plane analogous to the description in Section \ref{def1}. To draw obvious comparisons with the latter, we stick to the same notation. In the remainder of this section, we will only be discussing the Poissonian case and hence there should not be any scope of confusion.

Let $\Pi$ be a homogeneous rate one Poisson Point Process (PPP) on the plane. A partial order on $\R^2$ is given by $(x_1,y_1)\preceq (x_2,y_2)$ if $x_1\leq x_2$ and $y_1\leq y_2$.
For $u\preceq v$, a directed path $\gamma$ from $u$ to $v$ is a piecewise linear path that joins points $u=u_{0}\preceq u_1 \preceq \cdots \preceq u_{k}\preceq u_{k+1}=v$ where each point $u_i$
is a point of $\Pi$. Define the length of $\gamma$, denoted $|\gamma|$, to be the number of $\Pi$-points on $\gamma$.

\begin{definition}\label{geodesicLPP}
Define the last passage time from $u$ to $v$, denoted by $L(u,v)$, to be the maximum  of $|\gamma|$ as $\gamma$ varies over all directed paths from $u$ to $v$.
\end{definition}

Observe that conditional on the number of points $N(u,v)$ in the rectangle with bottom left corner at $u$ and top right corner at $v$, (these points are i.i.d.\ uniformly distributed in this rectangle), $L(u,v)$ has the same distribution of the longest increasing subsequence of a uniform random permutation of length $N(u,v)$. This model has therefore attracted attention for a long time as a Poissonized version of the classical Ulam's problem of longest increasing subsequences.

Using the same notations as in the case of Exponential last passage percolation for $x,y>0$ let $L_{x,y}$ denote the length of a longest increasing path from $\mathbf{0}$ to $(x,y)$, and $L_n$ shall denote the length of a longest increasing path from $\mathbf{0}$ to $\mathbf{n}$. The first order behaviour of $L_{x,y}$ was established in \cite{VerKer77,LogShep77} using analysis of Young Tableaux (see also \cite{AD95}).
\begin{theorem}
\label{t:llnLPP}
Let $x,y>0$ be fixed real numbers. Then
$$\lim_{n\to \infty} \frac{1}{n}\E L_{\lfloor nx \rfloor, \lfloor ny \rfloor}= G(x,y)=2 \sqrt{xy}.$$
\end{theorem}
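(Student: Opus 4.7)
The plan is to reduce the two-parameter statement to the diagonal one-parameter statement by a scaling argument, and then to invoke the classical Vershik-Kerov / Logan-Shepp asymptotics via de-Poissonization.

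First I would establish the scaling identity $L_{a,b} \stackrel{d}{=} L_{\sqrt{ab},\sqrt{ab}}$ for $a,b>0$. This follows because the linear map $(u,v)\mapsto (\sqrt{b/a}\,u,\sqrt{a/b}\,v)$ has Jacobian $1$, so it pushes a rate-one PPP on $[0,a]\times[0,b]$ to a rate-one PPP on $[0,\sqrt{ab}]\times[0,\sqrt{ab}]$; moreover the map is coordinate-wise monotone and hence preserves the partial order $\preceq$, so it induces a bijection between directed paths that preserves length. Applying this to $a=\lfloor nx\rfloor$, $b=\lfloor ny\rfloor$ (and absorbing the rounding error, which costs only $O(1)$ in length by monotonicity of $L$ in its endpoints) reduces the theorem to the statement
\begin{equation*}
\lim_{n\to\infty}\frac{\E L_n}{n}=2,
\end{equation*}
where $L_n$ is the maximal length of an increasing chain of PPP points in $[0,n]^2$.

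Next I would de-Poissonize. Conditional on $N:=\#(\Pi\cap [0,n]^2)$, the points of $\Pi\cap [0,n]^2$ are i.i.d.\ uniform, so $L_n$ has the same law as the length $\ell(\sigma_N)$ of the longest increasing subsequence of a uniformly random permutation $\sigma_N\in S_N$. The Vershik-Kerov and Logan-Shepp theorem, proved via the RSK correspondence and a variational analysis of limit shapes of Young diagrams under Plancherel measure, asserts that $\ell(\sigma_N)/\sqrt{N}\to 2$ both in probability and in $L^1$. Since $N\sim \mathrm{Poisson}(n^2)$ concentrates around $n^2$ at scale $n$, a standard truncation argument (using the a priori bound $\ell(\sigma_N)\le N$ to control the tails, together with the monotonicity $N\mapsto \E\ell(\sigma_N)$) upgrades this to $\E L_n/n\to 2$.

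The only real obstacle is the constant $2$ itself, i.e.\ the Vershik-Kerov / Logan-Shepp theorem. Existence of the limit $\lim \E L_n/n\in[1,e]$ is easy from superadditivity via Fekete, and the scaling identity above then gives $G(x,y)=c\sqrt{xy}$ for some constant $c$; pinning down $c=2$ is the content of the cited papers and is a genuinely nontrivial input. Since the present article quotes \cite{VerKer77, LogShep77} precisely for this, I would simply invoke their result rather than reproving it, and present the proof of Theorem~\ref{t:llnLPP} as the combination of the scaling bijection, de-Poissonization, and this quoted asymptotic.
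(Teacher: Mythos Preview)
The paper does not prove this theorem at all; it is quoted as a classical result with the attribution ``established in \cite{VerKer77,LogShep77} using analysis of Young Tableaux (see also \cite{AD95}).'' Your proposal supplies exactly the standard derivation that these citations stand in for: the area-preserving, order-preserving scaling $(u,v)\mapsto(\sqrt{b/a}\,u,\sqrt{a/b}\,v)$ reduces the general $G(x,y)$ to the diagonal case, de-Poissonization identifies $L_n$ with the longest increasing subsequence of a uniform permutation of random size $N\sim\mathrm{Poisson}(n^2)$, and the constant $2$ is then the Vershik--Kerov/Logan--Shepp theorem. This is correct and is precisely the content behind the paper's citation, so there is no discrepancy to discuss.

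One small remark: the bound $\ell(\sigma_N)\le N$ you invoke for the truncation is not by itself strong enough to give uniform integrability of $\ell(\sigma_N)/\sqrt{N}$. A cleaner route is the one you also mention: Fekete (or Kingman) gives existence of $c:=\lim \E L_n/n$ directly from superadditivity of $L$, and then the in-probability statement $\ell(\sigma_N)/\sqrt{N}\to 2$ together with Poisson concentration of $N$ around $n^2$ identifies $c=2$ without needing $L^1$ convergence of $\ell(\sigma_N)/\sqrt{N}$.
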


In particular, for $x=y=1$, this implies $\E L_{n}\sim 2n$. As before, the limit shape is strictly concave and one can prove that $L_n$ is actually concentrated around $2n$. As a matter of fact, as alluded to in the introduction, this is one of the exactly solvable model, and much more is known following \cite{BDJ99}, where $n^{1/3}$ fluctuations and Tracy-Widom scaling limit was established.

For any $u,v\in \R^2$, let us consider paths that attain the length $L(u,v)$. We shall refer to such paths as longest increasing paths or geodesics as before. One difference from the Exponential LPP case is that since $L_n$ can take only a discrete set of values, typically there will be many paths $\gamma$ attaining the maximum length $L_n$.

As before for an increasing continuous function $\gamma: [0,1]\to [0,1]$ with $\gamma(0)=0$ and $\gamma(1)=1$, we define the continuous $(\e,n)$-cylinder neighbourhood of $\gamma$, as
$$\gamma_{n}^{\e}:=\biggl\{(x,y)\in [0, n]^2: |y-n\gamma(n^{-1}x)|\leq \e n\biggr\}.$$

The following analogue of Theorem \ref{t:diag} was established in \cite{DZ1}.

\begin{theorem}
\label{t:diagLPP}
Let $\mathbb{I}$ denote the identity function on $[0,1]$, and let $\e>0$ be fixed. Let $\ce_n$ denote the event that all the geodesics between $\mathbf{0}$ and $\mathbf{n}$ are contained in  $\mathbb{I}_n^{\e}$. Then $\P(\ce_n)\to 1$ as $n\to \infty$.
\end{theorem}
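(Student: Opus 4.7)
The plan is to follow the same strict-concavity-plus-concentration template that yields Theorem~\ref{t:diag} in the Exponential case, together with the monotonicity of passage times and a simple discretization/union bound, adapted to the Poissonian setting. Two deterministic-versus-random ingredients drive the argument: (i) a lower bound $L_n \geq (2 - \eta) n$ with high probability for any prescribed $\eta > 0$, which follows from Theorem~\ref{t:llnLPP} combined with the standard concentration of $L_{\lfloor nx \rfloor, \lfloor ny \rfloor}$ at scale $o(n)$ (which is known for Poissonian LPP via Talagrand-type concentration, and certainly at the $\sqrt{n}$ scale alluded to in the Exponential analogue); and (ii) a matching uniform upper bound $L(\mathbf{0}, v) + L(v, \mathbf{n}) \leq (2 - c(\epsilon)) n$ holding simultaneously over all $v \in \Pi \cap \llbracket 0, n\rrbracket^2$ with $|v_1 - v_2| \geq \epsilon n$.

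The deterministic engine behind (ii) is strict concavity of the limit shape $G(x,y) = 2\sqrt{xy}$. A short Taylor expansion around $y = x$ shows that for any $(x, y) \in [0,n]^2$ with $|x - y| \geq \epsilon n$,
\begin{equation*}
G(x,y) + G(n-x, n-y) = 2\sqrt{xy} + 2\sqrt{(n-x)(n-y)} \leq 2n - c\, \epsilon^2 n,
\end{equation*}
for an absolute constant $c > 0$. The corner regions, where $x$ is close to $0$ or to $n$, are handled separately: if $|x-y| \geq \epsilon n$ and $x \leq \epsilon n/2$, then automatically $y \geq \epsilon n / 2$, which forces the second square root to lose at least a constant multiple of $\epsilon n$ compared to $n$ on its own; the symmetric case is similar.

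To promote the pointwise inequality to a uniform one over Poisson points $v$, I would lay down a deterministic grid of spacing $\sqrt{n}$ (any $o(n)$ scale works) covering the off-diagonal region $\{|x-y| \geq \epsilon n\}$, apply the concentration of $L_{\lfloor nx\rfloor, \lfloor ny \rfloor}$ around $G(x,y)$ at each grid point, and use monotonicity of $L(u,v)$ in its endpoints to dominate the supremum inside each cell by the values at the cell's corners. Since the grid has only $O(n^2)$ points and the concentration estimates for $L(\mathbf{0}, v)$ and $L(v, \mathbf{n})$ beat any polynomial error, a union bound finishes the proof of (ii).

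With (i) and (ii) holding on the same high-probability event, $L_n$ strictly exceeds $\max_{v} \bigl(L(\mathbf{0}, v) + L(v, \mathbf{n})\bigr)$ where the maximum is over $v \in \Pi$ outside $\mathbb{I}_n^\epsilon$. Because every geodesic from $\mathbf{0}$ to $\mathbf{n}$ decomposes through every Poisson point it visits, no geodesic can visit such a $v$, so every geodesic is contained in the cylinder and $\ce_n$ occurs. The main point requiring care is the corner handling described above; beyond that, the argument is qualitative and no delicate estimate is needed, which is why it goes through whenever the limit shape is strictly concave and passage-time concentration at scale $o(n)$ is available.
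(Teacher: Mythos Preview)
The paper does not prove Theorem~\ref{t:diagLPP}; it simply cites the result from \cite{DZ1} (just as the Exponential analogue, Theorem~\ref{t:diag}, is stated as ``standard'' with a reference to \cite{BSS17B}). Your sketch is the standard strict-concavity-plus-concentration argument that the paper itself alludes to in the discussion following Theorem~\ref{t:diag} (``This is a very general result and essentially uses only the strict concavity of the limit shape''), and it is correct. The one place I would tighten is the discretization step: with grid spacing $\sqrt{n}$ in an $n\times n$ box you get $O(n)$ grid points rather than $O(n^2)$, but since the concentration tails are stretched-exponential in $n$ this slip is immaterial; any $o(n)$ mesh with a polynomial number of cells works.
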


This model also exhibits the universal transversal fluctuation exponent of $2/3$, but this will not be important for us.

Deuschel and Zeitouni \cite{DZ1} considered the large deviation events $\cU_{\delta}:=\{L_n\geq (2+\delta)n\}$ for $\delta>0$ and $\cL_{\delta}:=\{L_{n}\leq (2-\delta)n\}$ for $\delta\in (0,2)$, and explicitly evaluated large deviation rate functions, establishing the analogue of Theorem \ref{t:ldp}.

\begin{theorem}
\label{t:ldpPoi}
There exists function $I_{u}(\delta), I_{\ell} (\delta)$ such that $I_{u}(\delta)\in (0,\infty)$ for all $\delta>0$ and $I_{\ell}(\delta)\in (0,\infty)$ for $\delta\in (0,2)$ such that
\begin{align*}
\lim_{n\to \infty} \frac{1}{n}{\log(\P(L_n\ge (2+\delta)n))}&= -I_u(\delta)\\
\lim_{n\to \infty} \frac{1}{n^2}{\log(\P(L_n\le (2-\delta)n))}&= -I_\ell(\delta).
\end{align*}
\end{theorem}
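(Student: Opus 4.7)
The overall strategy is to reduce to the well-studied problem of the longest increasing subsequence of a uniform random permutation via the standard Poissonization bridge. Conditional on the number $N$ of points of $\Pi$ in $[0,n]^2$, the points are i.i.d.\ uniform, so $L_n$ equals in distribution $L(\sigma_N)$, the length of the longest increasing subsequence of a uniformly random permutation of $[N]$. Since $N\sim\mathrm{Poisson}(n^2)$ concentrates at $n^2$ with Gaussian fluctuations of order $n$ and, more importantly, satisfies an LDP at speed $n^2$, the problem reduces to estimating $\P(L(\sigma_N)\ge(2+\delta)\sqrt{N})$ and $\P(L(\sigma_N)\le(2-\delta)\sqrt{N})$ as $N=n^2\to\infty$, with a careful bookkeeping that the Poisson-count deviations cost at most $e^{-cn^2}$ for the lower tail and $e^{-cn}$ for the upper tail.

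For the upper tail I would first get existence of the limit via super-additivity. If $u\preceq v\preceq w$, concatenating the longest paths from $u$ to $v$ and from $v$ to $w$ gives $L(u,w)\ge L(u,v)+L(v,w)$. Placing $k$ disjoint unit-scaled copies of $[0,n]^2$ along the diagonal of $[0,kn]^2$ yields $L_{kn}\ge \sum_{i=1}^k L_n^{(i)}$ with i.i.d.\ summands, so $\log\P(L_n\ge(2+\delta)n)$ is super-additive in $n$ and Fekete's lemma produces the limit $-I_u(\delta)\in[-\infty,0]$. Finiteness of $I_u(\delta)$ comes from a lower bound by locally inflating the Poisson intensity: on a narrow diagonal strip, tilting the intensity from $1$ to $(1+\eta)^2$ drives $L_n$ to $(2+2\eta)n$ with probability tending to one by Theorem \ref{t:llnLPP}, at a Radon--Nikodym cost that is only $O(n)$ in log. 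Positivity of $I_u(\delta)$ follows from the first-moment bound on the number of increasing $k$-tuples in a rate-$1$ Poisson process on $[0,n]^2$, which equals $n^{2k}/(k!)^2$; Stirling gives the bound $(e/(2+\delta))^{2(2+\delta)n}\cdot(2\pi k)^{-1}$, which is directly exponentially small for $\delta>e-2$. The remaining range $0<\delta\le e-2$ can then be handled by combining super-additivity of the upper-tail rate with the concentration of $L_n$ around $2n$ on the scale $o(n)$ (e.g.\ via Talagrand's inequality), which yields a matching exponential upper bound.

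For the lower tail I would invoke the RSK correspondence: under RSK a uniform permutation of $[N]$ maps to a pair of standard Young tableaux of common shape $\lambda\vdash N$ distributed according to the Plancherel measure, and the longest-increasing-subsequence length equals $\lambda_1$. The Logan--Shepp--Vershik--Kerov theorem identifies the limit shape $\Omega$ of $\lambda/\sqrt{N}$; more strongly, the Plancherel measure satisfies an LDP at speed $N$ with an explicit entropy functional $J$ on the space of continuous Young diagrams, vanishing uniquely at $\Omega$. Pulled back to $L_n$, the event $\cL_\delta$ becomes (modulo the harmless Poisson-count fluctuation) the event $\{\mu_1\le 2-\delta\}$ on rescaled shapes, a closed set avoiding $\Omega$, so $\inf\{J(\mu):\mu_1\le 2-\delta\}$ is strictly positive and yields $I_\ell(\delta)\in(0,\infty]$; a matching lower bound comes from constructing explicit near-minimizer shapes and using the usual dimension-of-tableau count to lower-bound the Plancherel mass of a neighbourhood, giving finiteness.

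The main obstacle is the lower-tail variational analysis: both proving the Plancherel-measure LDP at the correct speed $N$ and solving $\inf\{J(\mu):\mu_1\le 2-\delta\}$ explicitly (or at least verifying that its value is finite and strictly positive) are delicate and rely on a careful combinatorial analysis of the hook-length formula together with the entropy structure of $J$. The upper-tail case is comparatively routine once the super-additive framework is set up, but proving $I_u(\delta)>0$ uniformly in the entire range $\delta>0$ still requires care for small $\delta$, where the first-moment method alone is insufficient and must be supplemented by concentration.
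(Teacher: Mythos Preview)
The paper does not prove Theorem \ref{t:ldpPoi}; it is quoted as background from \cite{DZ1, sepLDP}, and the only information the paper gives about its proof is the one-line remark that it uses ``connections to longest increasing subsequences for permutations and the RSK correspondence to Young Tableaux,'' with the lower-tail analysis involving ``solving a variational problem for shapes of Young Tableaux.'' Your plan is exactly along these lines, so there is nothing in the present paper to contrast it with.

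As a standalone sketch your outline is sound. Two minor comments. First, for positivity of $I_u(\delta)$ at small $\delta$, the cleanest route is not to combine super-additivity with concentration but simply to invoke Talagrand's isoperimetric inequality for the longest increasing subsequence directly: for a uniform permutation on $[N]$ one has $\P(|L(\sigma_N)-\mathrm{med}\,L(\sigma_N)|\ge t)\le Ce^{-ct^2/\mathrm{med}\,L(\sigma_N)}$, and with $N\sim n^2$, $\mathrm{med}\,L\sim 2n$, $t=\delta n$ this already gives an $e^{-c\delta^2 n}$ bound without any first-moment preliminaries. Second, the Plancherel-measure LDP you cite for the lower tail is indeed the heart of \cite{DZ1}; what you call the ``main obstacle'' is precisely what that paper carries out, so your plan is an accurate summary of the cited argument rather than a new one.
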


Solving variational problems about Young tableaux, \cite{DZ1} evaluated the rate functions explicitly. However, the rate function does not provide any immediate information about the geometry of the geodesics. Conditional on $\cU_{\delta}$, they showed that the geodesics are still localized around the diagonal, but they left open the question whether or not the geodesics are localized around any deterministic curve conditioned on the lower tail large deviation event $\cL_{\delta}$. The following formal restatement of Theorem \ref{informal1} answers this question negatively.

\begin{maintheorem}[Formal]
\label{t:delocLPP}
For each $\delta\in (0,2)$, there exist $\e_0>0$, such that the following holds. Fix an increasing, continuous, surjective function $\gamma: [0,1]\to [0,1]$. Let $\ce_{\gamma, n}$ denote the event that there exists a geodesic $\Gamma_{n},$ between $\mathbf{0}$ and $\mathbf{n}$ that is not contained in $\gamma_{n}^{\e_0}$. For each such $\gamma$
$$\P(\ce_{\gamma, n} \mid \cL_{\delta})\to 1$$
as $n\to \infty$.
\end{maintheorem}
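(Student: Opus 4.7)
The plan is to transport the two-step argument of Theorem~\ref{t:deloc} to the Poissonian setting, establishing first a strong-concentration statement $L_n=\lfloor(2-\delta)n\rfloor$ on $\cL_\delta$ with high conditional probability, and then showing that the paths realizing this length can be arranged to visit $A^c:=[0,n]^2\setminus\gamma_n^{\e_0}$, where $A:=\gamma_n^{\e_0}$.

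The Poissonian analog of Proposition~\ref{FKG} is proved by the same Strips argument based on Theorem~\ref{t:llnLPP} combined with FKG for the Poisson point process, yielding $|\Pi\cap[0,n]^2|\le(1-c)n^2$ conditional on $\cL_\delta$ with overwhelming probability. The strong concentration $\P(L_n=\lfloor(2-\delta)n\rfloor\mid\cL_\delta)\to 1$ is then obtained by mimicking Section~\ref{s:sc} with small $K\times K$ boxes $\{B_{i,j}\}_j$ in place of vertices along each anti-diagonal $D_i$. Conditionally on $\Pi$ outside $\bigcup_j B_{i,j}$ and on $\cL_\delta$, the restrictions $\Pi\cap B_{i,j}$ are independent rate-one Poisson processes on the boxes, each conditioned on $\{\mathrm{LIS}(\Pi\cap B_{i,j})\le R_{i,j}\}$ with barriers $R_{i,j}=\lfloor(2-\delta)n\rfloor-M_{i,j}$, where $M_{i,j}$ is the best exterior path through $B_{i,j}$. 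By the sum-reduction, for $\Theta(n)$ anti-diagonals there are $\Theta(n/K)$ boxes with $R_{i,j}\le M$ for a constant $M=M(\delta)$, and in each such box an elementary Poisson-LIS estimate (replacing the truncated-exponential calculation of Lemma~\ref{super}) shows that $\{\mathrm{LIS}(\Pi\cap B_{i,j})=R_{i,j}\}$ has conditional probability bounded below; the aggregation scheme of Lemmas~\ref{l:conditional}--\ref{l:goodb} then yields the claim.

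To establish $\ce_{\gamma,n}$, the plan is to repeat the saturation argument restricted to boxes $B_{i,j}\subseteq A^c$. Since $\gamma$ is a continuous monotone curve crossing each anti-diagonal exactly once, $A$ intersects only $O(\e_0 n/K)$ of the $\Theta(n/K)$ boxes on each anti-diagonal, leaving $\Omega(n/K)$ of them in $A^c$. By the conditional independence of the Poisson restrictions in distinct boxes, the same aggregation machinery run over $A^c$-boxes should produce, with high conditional probability, a saturating box $B^*\subseteq A^c$ on some anti-diagonal. Any geodesic of length $L_n$ passing through $B^*$ must use points of $\Pi\cap B^*\subseteq A^c$, hence cannot be contained in $\gamma_n^{\e_0}$, which is precisely $\ce_{\gamma,n}$.

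The chief obstacle is ensuring that enough $A^c$-boxes carry small barriers $R_{i,j}\le M$. Small $R_{i,j}$ corresponds to $M_{i,j}$ being close to $\lfloor(2-\delta)n\rfloor$, and by the limit shape this happens for boxes near where the best exterior path crosses $D_i$; in the adversarial case $\gamma=\mathbb{I}$ these boxes sit exactly inside $A$, while boxes in $A^c$ have $M_{i,j}$ deficient by the cost of detouring around $A$. To overcome this I plan to sharpen the sum-reduction of Proposition~\ref{FKG} to apply locally on sub-rectangles of $[0,n]^2$ disjoint from the thickening of $\gamma$, producing enough reduction of the detoured exterior-path lengths $M_{i,j}^{A^c}$ for $B_{i,j}\subseteq A^c$. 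Choosing $\e_0=\e_0(\delta)$ sufficiently small (so that the detour deficit from forcing exterior paths through $A^c$ is absorbed into the constant $M$) will make $\Omega(n/K)$ boxes in $A^c$ available as saturation candidates per anti-diagonal, after which the argument proceeds by direct transcription of Section~\ref{s:sc}.
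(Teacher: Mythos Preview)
Your proposal follows the right high-level architecture but substantially overcomplicates the Poissonian case and manufactures an obstacle that the paper sidesteps entirely.

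The paper does \emph{not} run the aggregation machinery of Lemmas~\ref{l:conditional}--\ref{l:goodb}; it explicitly observes this is unnecessary here. The point is that in the discrete setting the relevant barrier is not $R_v\le M$ but $R_v=0$. The paper calls a box $\B_v$ \emph{slack} if $R_v\ge 1$, and shows (Proposition~\ref{p:saturated}) that for $\Theta(m)$ anti-diagonals $D_i$, at least an $\e$-fraction of boxes on $D_i$ satisfy $R_v=0$. The reason this works is that when $R_v\ge 1$ and the box side $\e_D$ is tiny, the conditioning $\{|\Pi_v|\le R_v\}$ barely affects the Poisson mean (Lemma~\ref{l:meanPoi}); so if almost all boxes on $D_i$ were slack, the point count on $D_i$ would be essentially typical, contradicting the FKG reduction. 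Crucially, the failure probability of $\cB_i\cap\cM_i^c$ is \emph{exponentially} small in $n$ (Lemma~\ref{l:bmestPoi}), so a plain union bound over $i\in I$ suffices---no sequential exploration, no Lemma~\ref{l:goodb}.

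Once one has that an $\e$-fraction of \emph{all} boxes on a saturated anti-diagonal have $R_v=0$, the ``chief obstacle'' you raise evaporates: choose $\e_0$ so small that $\gamma_n^{\e_0}$ meets fewer than $\tfrac{\e}{2}|D_i|$ boxes on each $D_i$; then at least $\tfrac{\e}{2}|D_i|$ boxes with $R_v=0$ lie outside the cylinder. A box with $R_v=0$ carries, by definition, a path from $\mathbf{0}$ to $\mathbf{n}$ through $\B_v$ of length exactly $\lfloor(2-\delta)n\rfloor$, which is automatically a geodesic on $\cL_\delta$ and is not contained in $\gamma_n^{\e_0}$. There is no need to restrict the saturation argument to $A^c$-boxes, no need for a local sum-reduction on sub-rectangles, and no adversarial case $\gamma=\mathbb{I}$ to worry about: the $\e$-fraction is of the whole anti-diagonal, not only of boxes near the diagonal.

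In short: your two-step plan (strong concentration, then locate a geodesic outside the cylinder) is morally what the paper does, but the discrete nature of the problem lets one prove directly that many boxes are \emph{exactly} saturated ($R_v=0$), which both yields strong concentration for free and makes the delocalization a simple pigeonhole. Your proposed detour through barriers $R_{i,j}\le M$, the aggregation scheme, and a localized FKG refinement is unnecessary, and the last of these is the vaguest part of your outline.
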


As already mentioned in Section \ref{pwoc}, observe that the statement is subtly different from Theorem \ref{t:deloc}. In the previous case the geodesic was unique and we asserted that with large probability the geodesic is not localized around any deterministic curve $\gamma$; whereas Theorem \ref{t:delocLPP} only asserts that not all the geodesics can be contained in the small cylinder around $\gamma$. This is an effect of the discrete nature of the problem. As mentioned before, we shall show that even conditional on the large deviation event typically there will be many geodesics,  that attain the length $\lfloor(2-\delta)n \rfloor$, and hence there are some which are away from the deterministic curve $\gamma$ with high probability.

The proof strategy is similar to the proof of Theorem \ref{t:deloc}, however certain steps requires extra arguments whereas certain others are less complicated. The first step as in the Exponential case is to prove that in the lower tail large deviations regime, the law of large numbers for the total number of points in $[0,n]^2$ changes. For $A\subseteq \R^2$, we shall denote by $N(A)$ the number of $\Pi$-points in $A$. The following proposition is the analogue of Proposition \ref{FKG} in the Poissonian setting.

\begin{proposition}
\label{FKGPoi}
Given $\delta>0$ there exists $\e\in (0,\frac{1}{16})$ such that
$$\P(N([0,n]^2)\le (1-4\e)n^2\mid \cL_\delta) \ge  1-e^{-cn}$$
for some $c=c(\delta)$, and all $n$ large enough.
\end{proposition}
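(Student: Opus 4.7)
The plan is to adapt the strategy of the proof of Proposition~\ref{FKG}, with vertex weights replaced by counts of Poisson points and with the weight--path inequality \eqref{bnd45} replaced by a monotonicity property of longest increasing paths under point deletion. The argument rests on three ingredients: an FKG-based stochastic domination coupling, the existence of a long increasing path through each of linearly many disjoint strips in a typical environment, and the elementary observation that deleting a single Poisson point decreases the longest increasing path length by at most one.

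For the first ingredient, the Poissonian analogue of Lemma~\ref{fkg203}, I would use that $\cL_{\delta}$ is a \emph{decreasing} event in the point configuration, since removing points can only shorten increasing paths. The FKG inequality for Poisson processes then yields a coupling $(\Pi^*,\Pi)$ with $\Pi^*\subseteq \Pi$ almost surely, where $\Pi^*$ is distributed as $\Pi$ conditioned on $\cL_{\delta}$.

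For the second ingredient, I would prove the Poissonian analogue of Lemma~\ref{l:strip}. With ${\rm Strip}^{K}_{i}=\{(x,y)\in[0,n]^2:|x-y-4iK|\le K\}$, let $L^{i}_n$ be the length of the longest increasing path in ${\rm Strip}^{K}_{i}$ from $(4iK,0)$ to $(n,n-4iK)$. Using Theorem~\ref{t:llnLPP}, pick $K$ so that $\E L_K\ge (2-\delta/4)K$; partitioning ${\rm Strip}^{K}_0$ into $n/K$ disjoint $K\times K$ boxes along the diagonal and applying independence plus standard exponential concentration for $L_K$ yields $L^{0}_n\ge (2-\delta/4)n$ except on a set of probability $e^{-c'n}$. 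Translation invariance and a union bound extend this simultaneously to all $0\le i\le c_0 n$ for a suitably small $c_0>0$. Any strip path from $(4iK,0)$ to $(n,n-4iK)$ extends to an increasing path from $\mathbf{0}$ to $\mathbf{n}$ by prepending the segment $(0,0)\to(4iK,0)$ and appending $(n,n-4iK)\to(n,n)$, so on the event $\cL_{\delta}$, the corresponding length $L^{i,*}_n$ evaluated on $\Pi^*$ satisfies $L^{i,*}_n\le (2-\delta)n$.

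The third and crucial ingredient, which replaces \eqref{bnd45}, is the following monotonicity: removing one Poisson point can decrease the longest increasing path length by at most one, since deleting that point from the old maximizer still leaves an increasing path. Iterating, for any region $A$ the decrease in the longest increasing path length in $A$ is bounded above by the number of points removed from $A$. Applied to $A={\rm Strip}^{K}_{i}$ in the coupling $(\Pi^*,\Pi)$, this gives, with probability at least $1-e^{-cn}$,
\[
N(\Pi\cap{\rm Strip}^{K}_{i})-N(\Pi^*\cap{\rm Strip}^{K}_{i})\;\ge\; L^{i}_n-L^{i,*}_n\;\ge\;\tfrac{\delta}{2}n.
\]
Summing over $0\le i\le c_0 n$ disjoint strips and using sharp Poisson concentration $N(\Pi\cap[0,n]^2)\le n^2+O(n)$ (off an $e^{-cn^2}$ set) gives $N(\Pi^*\cap[0,n]^2)\le (1-4\e)n^2$ with probability $\ge 1-e^{-cn}$ for suitable $\e=\e(\delta)>0$, which is the desired conclusion since $\Pi^*$ has the conditional law. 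The main new idea, rather than a genuine obstacle, is the monotonicity step above; once it is in place the remainder is a faithful translation of the exponential argument, and I would expect it to go through without further complication.
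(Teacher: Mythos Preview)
Your proposal is correct and follows essentially the same route as the paper, which simply states that the proof is ``almost identical to the proof of Proposition~\ref{FKG} using the FKG inequality'' and omits the details. Your three ingredients---the FKG coupling $\Pi^*\subseteq\Pi$, the strip lemma, and the monotonicity bound $N(\Pi\cap A)-N(\Pi^*\cap A)\ge L_n(A)-L_n^*(A)$---are exactly the Poissonian translations of Lemma~\ref{fkg203}, Lemma~\ref{l:strip}, and inequality~\eqref{bnd45} respectively, and the remainder of the argument is identical.
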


Proof of this proposition is almost identical to the proof of Proposition \ref{FKG} using the FKG inequality and we omit the details. For the remainder of this section $\delta\in (0,2)$ will be fixed and $\e$ will remain fixed so that the conclusion of Proposition \ref{FKGPoi} holds.

\subsection{Discretization}\label{disc20}
To use the strategy of the proof of Theorem \ref{t:deloc}, we shall need to discretize  the space $[0,n]^2$ into a lattice of boxes. Let $\e_{D}$ be a sufficiently small level of discretization (to be chosen later). Without loss of generality we shall assume that $\frac{n}{\e_{D}}$ is an integer, and set $m=\frac{n}{\e_{D}}-1$. Now divide the square $[0,n]^2$ into squares (disjoint except at the boundary) of side length $\e_{D}$ by lines parallel to the co-ordinate axes (see Figure \ref{fig1} (b)). Observe that, these boxes can in a natural way be identified to the vertices in $\llbracket 0,m \rrbracket^2 \subseteq \Z^2$, and we shall use this identification. For $v\in \llbracket 0,m \rrbracket^2$, let $\B_{v}$ denote the square corresponding to $v$. By $\Pi_{v}$ we shall denote the point process $\Pi$ restricted to $v$; these are independent collections of rate one Poisson processes on $\e_{D}\times \e_{D}$ boxes (we shall work on the probability one set on which there are no $\Pi$-points on the boundaries of any  $\B_v$).

With this discretization we can now directly borrow notations from Section \ref{s:sc}. For $i\in \llbracket 0,2m\rrbracket$, let $D_i$ denote the $i$-th anti-diagonal as before.
Let
\begin{equation}\label{niceint10}
I=\{i\in \N: |i-m|\leq (1-\frac{\sqrt{\e}}{4})m\}.
\end{equation}
As in Section \ref{s:sc} we shall restrict our attention to those $D_i$'s with $i\in I$, in particular these $D_i$'s all have size linear in $m$; which would imply tat $\cup_{v\in D_i} \B_{v}$ has area linear in $n\e_{D}$.
We will now follow the same strategy as before, to expose the point process, except inside the squares on an anti-diagonal $D_i$ (see Figure \ref{fig1} (a) and (b)).

Fix $i\in I$, let $\cF_i$ denote the sigma algebra generated by $\{\Pi_v: v\notin D_i\}$. Let $\gamma_{v}$ denote the best path from $\mathbf{0}$ to $\mathbf{n}$ that passes through $\B_v$. Notice here the difference from the Exponential LPP model. Previously, conditional on $\cF_i$, the best path through $v\in D_i$ was deterministic, and the only uncertainty in its length came from the weight of the vertex $v$. However here the path through $B_v$ can  enter and exit $\B_v$ at different points and hence we are forced to make somewhat more complicated definitions.

Given any box $\B_v$ with $v\in D_i$, let $\partial_{\rm{SW}}\B_v$ and $\partial_{\rm{NE}}\B_v$  denote the south-west and north-east boundaries (see Figure \ref{po123}) of the box $\B_v$. Condition on $\cF_{i}$ so that $\cF_{i}$ is compatible with $\cL_{\delta}$. Now for any $v \in D_i,$ consider the $\cF_{i}$ measurable function
$$f_{v}:\partial_{\rm{SW}}\B_v \times \partial_{\rm{NE}}\B_v\to \Z_{\ge 0}$$
given by
\begin{equation}\label{defbdry1}
f_{v}(x,y)=\lfloor(2-\delta)n \rfloor- (L(\mathbf{0},x)+(L(y,\mathbf{n})).
\end{equation}
\begin{figure}[h]
\centering
\includegraphics[scale=.5]{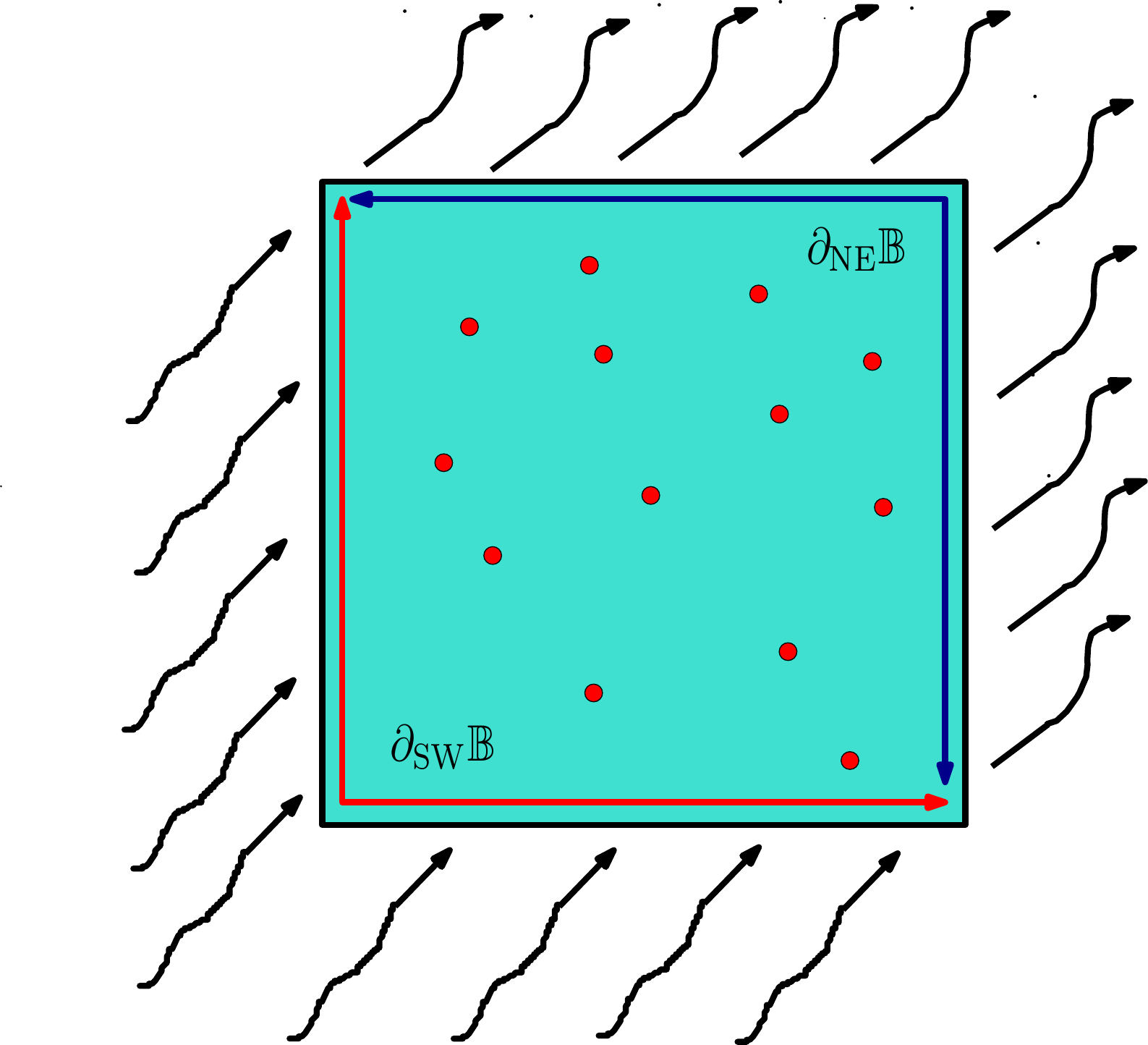}
\caption{The boundary condition imposed on the Poisson process in a certain box by exposing its complement.}
\label{po123}
\end{figure}

Note that $(L(0,x)+(L(y,n))$ is $\cF_{i}$ measurable, and obviously $f_{v}$ has to be nonnegative because other wise the data in $\cF_{i}$ will not be compatible with the event $\cL_{\delta}$. Conditional on $\cL_{\delta}$ and $\cF_{i}$ clearly, the distribution of $\Pi_{v}$ must be supported on configurations so that $L(x,y)\leq f_v(x,y)$ for all $x\in \partial_{\rm{SW}}\B_v$ and for all $y\in \partial_{\rm{NE}}\B_v$.  The result analogous to Lemma \ref{l:distexcess} in this setting is the following.

\begin{lemma}
\label{l:distexcessLPP}
Fix $i\in I$ and condition on $\cF_{i}$, so that $\cF_{i}$ is compatible with $\cL_{\delta}$ (this implies $f_{v}(\cdot,\cdot)\ge 0$ for each $v\in D_{i}$). Then conditional on $\cF_{i}, \cL_{\delta}$, the point processes $\{\Pi_{v}: v\in D_i\}$ are independent with the conditional distribution of $\Pi_{v}$ is given by that of a rate one Poisson point process under the following conditioning $$L(x,y)\le f_{v}(x,y) \,\,\,\,\forall (x,y) \in \partial_{\rm{SW}}\B_v \times \partial_{\rm{NE}}\B_v$$ (we denote this law by ${\rm{PP}}_{f_v}$).
\end{lemma}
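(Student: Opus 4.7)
The plan is to reduce Lemma \ref{l:distexcessLPP} to the basic spatial independence of Poisson processes on disjoint regions, after expressing the conditioning event $\cL_\delta$ (under $\cF_i$) as an intersection of per-box constraints, one on each $\Pi_v$ with $v \in D_i$.

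First I would record that, unconditionally, $\{\Pi_v\}_{v \in D_i}$ is a family of mutually independent rate-one Poisson processes on the pairwise disjoint boxes $\{\B_v\}_{v \in D_i}$, jointly independent of $\cF_i$, which is generated by the restriction of $\Pi$ to $\bigcup_{w \notin D_i}\B_w$. Next I would verify that the functions $f_v$ from \eqref{defbdry1} are $\cF_i$-measurable: for $x \in \partial_{\rm SW}\B_v$ and $y \in \partial_{\rm NE}\B_v$, the quantities $L(\mathbf{0},x)$ and $L(y,\mathbf{n})$ depend only on the Poisson points in the rectangles $[0,x_1] \times [0,x_2]$ and $[y_1,n] \times [y_2,n]$ respectively, and a short geometric check (using that every other box $\B_w$ with $w \in D_i$ lies strictly upper-left or strictly lower-right of $\B_v$) confirms that these rectangles avoid the interiors of all boxes of $D_i$.

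The heart of the argument, and the step I expect to be the main obstacle, is the event-level factorisation (on any $\cF_i$ compatible with $\cL_\delta$)
$$\cL_\delta \;=\; \bigcap_{v \in D_i} \cE_v, \qquad \cE_v := \bigl\{ L(x,y) \le f_v(x,y) \text{ for all } (x,y) \in \partial_{\rm SW}\B_v \times \partial_{\rm NE}\B_v \bigr\}.$$
The inclusion $\cL_\delta \subseteq \bigcap_v \cE_v$ is the easier half: if $\cE_v$ failed at some $(x,y)$, concatenating the optimal directed paths $\mathbf{0} \to x$, $x \to y$ through $\Pi_v$, and $y \to \mathbf{n}$ would produce a single directed path from $\mathbf{0}$ to $\mathbf{n}$ of length $L(\mathbf{0},x) + L(x,y) + L(y,\mathbf{n}) > \lfloor(2-\delta)n \rfloor$, contradicting $\cL_\delta$. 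For the reverse inclusion I would take any directed path $\gamma$ from $\mathbf{0}$ to $\mathbf{n}$ and use that $\bigcup_{v \in D_i}\B_v$ is a staircase topologically separating $\mathbf{0}$ from $\mathbf{n}$ in $[0,n]^2$, so the piecewise-linear curve through the Poisson points of $\gamma$ must cross it through some box $\B_v$. Since any two distinct boxes in $D_i$ are $\preceq$-incomparable and the Poisson points of $\gamma$ form a $\preceq$-chain, those points can lie in at most one such box; letting $x \in \partial_{\rm SW}\B_v$ and $y \in \partial_{\rm NE}\B_v$ be the entry and exit points of the curve into this $\B_v$, the three sub-paths of $\gamma$ (before $x$, between $x$ and $y$, after $y$) extend to directed paths of lengths bounded by $L(\mathbf{0},x)$, $L(x,y)$, and $L(y,\mathbf{n})$ respectively, so on $\bigcap_v \cE_v$ we obtain $|\gamma| \le L(\mathbf{0},x) + f_v(x,y) + L(y,\mathbf{n}) = \lfloor(2-\delta)n\rfloor$, whence $L_n \le \lfloor(2-\delta)n\rfloor$.

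Once the factorisation is in hand, the conclusion follows by a standard principle: conditioning a product of independent laws on an intersection of events, each measurable with respect to a single factor, returns the product of coordinatewise conditional laws. Hence $\{\Pi_v\}_{v \in D_i}$ remain mutually independent under $\P(\cdot \mid \cF_i, \cL_\delta)$, and each $\Pi_v$ is distributed as a rate-one Poisson process on $\B_v$ conditioned on $\cE_v$, which is precisely ${\rm PP}_{f_v}$.
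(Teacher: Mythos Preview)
Your proposal is correct and follows precisely the approach the paper has in mind: the paper omits the proof entirely, noting only that it is ``proved in the same way Lemma \ref{l:distexcess} is proved,'' and your factorisation of $\cL_\delta$ into the per-box events $\cE_v$ together with the spatial independence of the Poisson process is exactly the content of that omitted argument. Your write-up simply fills in the geometric details (the $\cF_i$-measurability of $f_v$ and the staircase-separation argument) that the paper leaves implicit.
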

Note that almost surely for any $v\in D_{i},$ the function $f_v(\cdot,\cdot)$ is a step function with only finitely many steps since almost surely in \eqref{defbdry1} the functions $L(\mathbf{0},x)$ and $L(y,\mathbf{n})$ are themselves piecewise constant with finitely many such pieces.
This lemma is proved in the same way Lemma \ref{l:distexcess} is proved, and hence we omit the proof.

We will also need the following notation: for any $v\in D_i$ let  $$R_v=\min_{x,y}f_{v}(x,y)$$
where the minimum is taken over $x\in \partial_{\rm{SW}}\B_v, y\in \partial_{\rm{NE}}\B_v$. Observe that if $R_{v}=0$, then there exists a path from $\mathbf{0}$ to $\mathbf{n}$ passing through $\B_{v}$ with length $\lfloor(2-\delta)n \rfloor$. We shall show that with high probability for many $i\in I$, there exists a positive fraction of $v$'s in $D_i$ with $R_v=0$. Call a vertex $v$ \textbf{slack} if $R_v\geq 1$. For $i\in I$, and $\e'>0$ call the anti-diagonal $D_i,$ as $\e'-$\textbf{saturated}  if the number of slack vertices on $D_i$ is at most $(1-\e') |D_i|$. Let $\e$ be as in Proposition \ref{FKGPoi}. Let $\cM_i$ denote the event that $D_i$ is $\e-$saturated. The following lemma is analogous to \eqref{e:manyb} and shows that there are many saturated anti-diagonals conditioned on $\cL_{\delta}$.

\begin{proposition}
\label{p:saturated}
In the above set-up, there exists $c>0$ such that for all $n$ sufficiently large
$$\P\left(\sum_{i\in I} \mathbf{1}(\cM_i)> \frac{\e m}{4} \middle| \cL_{\delta}\right) \geq 1-e^{-cn}$$
\end{proposition}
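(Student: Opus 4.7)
The plan is to mirror, in the Poissonian setting, the two-part argument used in the Exponential case to prove \eqref{e:manyb} together with Lemma \ref{l:bmest}. For each $i \in I$ I would introduce the sparse-strip event
$$\cB_i := \Bigl\{N\bigl(\cup_{v \in D_i} \B_v\bigr) \leq (1-2\e)\e_D^2 |D_i|\Bigr\},$$
whose unconditional mean $\E[N(\cup_{v\in D_i}\B_v)] = \e_D^2|D_i|$ makes $\cB_i$ the natural Poissonian analogue of the event in Lemma \ref{l:bmest}. The proof would then be completed by two intermediate claims: (a) $\P\bigl(\sum_{i\in I} \mathbf{1}(\cB_i) > \frac{\e m}{2} \bigm| \cL_\delta\bigr) \geq 1 - e^{-cn}$, and (b) $\P(\cB_i \cap \cM_i^c \mid \cL_\delta) \leq e^{-cn}$ for each $i \in I$. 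A union bound over the $O(m)$ indices in $I$ would give $\sum_{i \in I} \mathbf{1}(\cM_i) \geq \sum_{i \in I} \mathbf{1}(\cB_i) - O(me^{-cn}) > \frac{\e m}{4}$ with conditional probability at least $1 - e^{-c'n}$, as required.

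I would establish (a) by contradiction exactly as in the corollary to Proposition \ref{FKG}: from \eqref{niceint10} one has $\sum_{i\in I}|D_i|\e_D^2 \geq (1-\e)n^2$, so if at most $\frac{\e m}{2}$ of the $\cB_i$'s hold in $I$, summing the definition of $\cB_i^c$ over the remaining indices gives
$$N([0,n]^2) \;\geq\; \sum_{i \in I,\, \cB_i^c} N\bigl(\cup_{v\in D_i}\B_v\bigr) \;>\; (1-2\e)\e_D^2 \sum_{i \in I,\, \cB_i^c} |D_i| \;\geq\; (1-3\e)n^2,$$
which contradicts the bound $N([0,n]^2) \leq (1-4\e)n^2$ supplied (with conditional probability $\geq 1-e^{-cn}$) by Proposition \ref{FKGPoi} for small $\e$.

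For (b), the key observation is that the $\cL_\delta$-conditioning restricts a slack box only via a rare event when $\e_D$ is small. I would condition on $\cF_i$ with $\cM_i^c$ holding, so that at least $(1-\e)|D_i|$ boxes $v \in D_i$ are slack; for each such $v$ one has $f_v \geq 1$, and by Lemma \ref{l:distexcessLPP} the conditional law of $\Pi_v$ is the rate-$1$ Poisson law on $\B_v$ restricted to an event that \emph{contains} $\{\text{longest increasing path in }\B_v\text{ has length } \leq 1\}$, the complement of which has unconditional probability $O(\e_D^4)$. A standard truncation computation then yields $\E[N(\B_v) \mid \cF_i, \cL_\delta] \geq \e_D^2\bigl(1 - O(\e_D)\bigr)$ together with sub-Poisson lower-tail control on $N(\B_v)$. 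Exploiting the $v$-wise independence of the $\Pi_v$'s on $D_i$ furnished by Lemma \ref{l:distexcessLPP}, a Bernstein-type bound on the sum of counts over the $(1-\e)|D_i|$ slack boxes shows that, for $\e_D$ chosen sufficiently small depending on $\e$, the strip count is at least $(1-1.5\e)|D_i|\e_D^2$ with conditional probability at least $1 - e^{-cn}$, ruling out $\cB_i$ and proving (b).

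The main technical obstacle will be the per-box estimate in (b): extracting the uniform-in-$\cF_i$ lower bound $\E[N(\B_v) \mid \cF_i, \cL_\delta] \geq \e_D^2\bigl(1 - O(\e_D)\bigr)$ requires a second-moment control of the truncated Poisson count under the data-dependent restriction $\{L(x,y) \leq f_v(x,y)\ \forall (x,y)\}$. I would dispense with the $\cF_i$-dependence by replacing this restriction with the weaker but uniform restriction $\{\text{longest increasing path in }\B_v\text{ has length } \leq 1\}$, which is legitimate because $f_v \geq 1$ for slack $v$; once this reduction is in place, the remaining estimates reduce to elementary calculations on small Poisson boxes (bounding $\P(N \geq 2)$ and the conditional variance of $N$), precisely as in Remark \ref{gen304} for the Exponential case.
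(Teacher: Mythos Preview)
Your proposal is correct and matches the paper's argument: the paper defines the same events $\cB_i$, derives your (a) as \eqref{e:manybPoi} from Proposition~\ref{FKGPoi} by the identical counting argument, proves your (b) as Lemma~\ref{l:bmestPoi}, and combines them by the same union bound. The only cosmetic difference is in (b), where the paper streamlines your conditional-mean-plus-Bernstein step by observing (Lemma~\ref{l:number}) that for slack $v$ the conditional count $|\Pi_v|$ stochastically dominates a ${\rm Poisson}(\e_D^2)$ conditioned to be at most $1$, i.e.\ a Bernoulli of mean $\e_D^2/(1+\e_D^2)$, reducing the concentration step to a Chernoff bound for i.i.d.\ Bernoullis.
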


We postpone the proof of Proposition \ref{p:saturated} for now and complete the proof of Theorem \ref{t:delocLPP} first.

\begin{proof}[Proof of Theorem \ref{t:delocLPP}]
Fix $\e_0$ sufficiently small so that $\e_{D}|D_i|\e \ge 8\e_0n,$ for all $i\in I$. Fix $\gamma$ as in the statement of the theorem and consider $\gamma_n^{\e_0}$. Thus by choice of $\e_0$, for each $i\in I$, $$\#\{v\in D_i: \B_{v}\cap \gamma_{n}^{\e_0}\neq \emptyset\}<\frac{\e}{2} |D_i|.$$
Now for any $i\in I$ such that $\mathbf{1}(\cM_i)=1$ which means $D_i$ is $\e-$saturated,   it follows that there exists $v$ (in fact at least $\frac{\e}{2}|D_i|$ many $v's$) with $\B_v$ disjoint from $\gamma_{n}^{\e_0}$ such that $R_v=0$. This implies that there exists a path $\Gamma$ through $\B_v$ with length $\lfloor (2-
\delta)n \rfloor$. The proof is completed by observing that such a path must be a geodesic conditional on $\cL_{\delta}$.
\end{proof}

It remains to prove Proposition \ref{p:saturated} for which we need a number of lemmas. We start with the following lemma where for any point process $\Pi$, we denote by $|\Pi|$ the total number of points in the process. Recall the definition of $\Pi_v$ from  Section \ref{disc20}.
\begin{lemma}
\label{l:number}
For $v\in D_i$, the conditional law of $|\Pi_v|$ conditioned on $\cL_{\delta}$ and $\cF_{i}$ stochastically dominates a Poisson random variable with mean $\e_{D}^2$ conditioned to be at most $R_{v}$.
\end{lemma}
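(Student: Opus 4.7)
The plan is to apply Lemma \ref{l:distexcessLPP} to reduce the claim to a comparison of two explicit laws on $\Z_{\geq 0}$. Fix $i \in I$ and an outcome of $\cF_i$ compatible with $\cL_\delta$; then $R_v$ and the step function $f_v$ are $\cF_i$-measurable and hence determined. By Lemma \ref{l:distexcessLPP}, the conditional distribution of $\Pi_v$ given $\cF_i$ and $\cL_\delta$ is that of a rate-one Poisson point process $\nu$ on $\B_v$ conditioned on the event
\[
A := \{L(x,y) \leq f_v(x,y) \text{ for all } (x,y) \in \partial_{\rm{SW}}\B_v \times \partial_{\rm{NE}}\B_v\}.
\]
The goal therefore becomes: under $\nu(\cdot \mid A)$, the variable $|\Pi_v|$ stochastically dominates $Z \mid \{Z \leq R_v\}$, where $Z\sim {\rm Poisson}(\e_D^2)$ is the unconditional distribution of $|\Pi_v|$ under $\nu$.

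The central observation, on which the entire argument hinges, is the trivial pointwise bound that any directed path inside $\B_v$ visits only distinct points of $\Pi_v$ and therefore has length at most $|\Pi_v|$. Hence $L(x,y) \leq |\Pi_v|$ for every boundary pair $(x,y)$, and whenever $|\Pi_v| \leq R_v = \min_{x,y} f_v(x,y)$, the event $A$ is automatically satisfied. Consequently, under the unconditional law $\nu$,
\[
\{|\Pi_v| \leq R_v\} \subseteq A, \qquad \text{so} \qquad \nu(A) \geq \nu(|\Pi_v| \leq R_v) = P(Z \leq R_v).
\]

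With this inclusion in hand, stochastic dominance drops out by a direct likelihood-ratio comparison. For any $k\leq R_v$, $\{|\Pi_v|=k\}\subseteq A$, so
\[
\nu(|\Pi_v|=k\mid A) \;=\; \frac{\nu(|\Pi_v|=k)}{\nu(A)} \;\leq\; \frac{P(Z=k)}{P(Z\leq R_v)} \;=\; P(Z=k\mid Z\leq R_v).
\]
Summing over $k\leq K$ for any $K<R_v$ gives $\nu(|\Pi_v|\leq K\mid A)\leq P(Z\leq K\mid Z\leq R_v)$, and for $K\geq R_v$ the right-hand side is $1$ and the inequality is trivial. Passing to complements yields the required tail domination $\nu(|\Pi_v| \geq K \mid A) \geq P(Z \geq K \mid Z \leq R_v)$ at every threshold $K$.

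The only genuine step to verify is the inclusion $\{|\Pi_v|\leq R_v\}\subseteq A$; this is where all the geometric content resides, and it succeeds precisely because $R_v$ is defined as the \emph{minimum} of $f_v$ over the boundary while being compared against the \emph{global} point count $|\Pi_v|$, so no delicate internal LPP estimate inside $\B_v$ is needed. Once the inclusion is noted, the remainder is an elementary Bayes computation and I do not anticipate any further obstacle.
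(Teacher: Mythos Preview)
Your proof is correct and follows essentially the same approach as the paper: both hinge on the observation that $\{|\Pi_v|\leq R_v\}\subseteq A$ (equivalently, the paper's $p_k=1$ for $k\leq R_v$), and then conclude stochastic domination from the resulting inequality $\nu(A)\geq P(Z\leq R_v)$. Your write-up is in fact more complete, as the paper leaves the final likelihood-ratio/CDF comparison implicit.
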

\begin{proof} Note that a standard way to sample $\Pi_v,$  is to first sample  $|\Pi_v|,$ which is a Poisson random variable with mean $\e_D^2$, and then choose $|\Pi_v|$ many uniformly located points in $\B_v.$ Now note that by definition of conditional probability, $\P(|\Pi_v|=k\mid \cF_i,\cL_{\delta})$ is proportional to $\P(|\Pi_v|=k)p_k$ where $p_k$ is  the probability ($\cF_i$ measurable), that the length of the longest path passing through $\B_v,$ where  $k$ points are uniformly chosen  in $\B_v,$ and the point process outside $D_i$ being specified by $\cF_i,$ is at most $(2-\delta)n.$ The proof of the lemma is now complete by noticing that $p_i=1$ for $i=0,1,2,\ldots,R_v.$
\end{proof}

Now we shall show that if almost every $v$ in any $D_i$ is slack, then the total number of points in $\cup_{v\in D_i} \B_v$ cannot be much smaller than typical, which will contradict Proposition \ref{FKGPoi}. For this we shall use the following lemma which says that if $\e_D$ is sufficiently small, conditioning the total number of points in $\B_{v}$ to be at most one, does not change the expected number of points by much. In the following lemma, $\Pi_{v}^{\leq 1}$ denotes a Poisson process on $\B_v$ conditioned to have at most one point,  and hence $|\Pi_{v}^{\leq 1}|$ is nothing but a Bernoulli variable.
\begin{lemma}
\label{l:meanPoi}
In the above set up, if $\e_D$ is sufficiently small (compared to $\e$), then we have for all $v$
$$\E|\Pi_v^{\le 1}|\ge (1-\e/4)\E|\Pi|.$$
\end{lemma}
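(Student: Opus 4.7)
The plan is to compute both expectations explicitly, since $|\Pi_v|$ is a Poisson random variable with mean $\e_D^2$ and the conditioning event is elementary. Writing $\lambda = \e_D^2$, one has
$$
\P(|\Pi_v|=0) = e^{-\lambda}, \qquad \P(|\Pi_v|=1) = \lambda e^{-\lambda},
$$
so that $\P(|\Pi_v|\le 1) = e^{-\lambda}(1+\lambda)$. From this I would immediately obtain
$$
\E|\Pi_v^{\le 1}| \;=\; \frac{\lambda e^{-\lambda}}{e^{-\lambda}(1+\lambda)} \;=\; \frac{\lambda}{1+\lambda} \;=\; \frac{\e_D^{2}}{1+\e_D^{2}},
$$
while $\E|\Pi_v| = \e_D^2$.

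To finish, I would take the ratio
$$
\frac{\E|\Pi_v^{\le 1}|}{\E|\Pi_v|} \;=\; \frac{1}{1+\e_D^{2}},
$$
and observe that this quantity tends to $1$ as $\e_D\to 0$. In particular, choosing $\e_D$ small enough that $\e_D^{2} \le \frac{\e/4}{1-\e/4}$ (which is possible because $\e$ has been fixed once and for all via Proposition~\ref{FKGPoi}, while $\e_D$ is a free discretization parameter) gives $\frac{1}{1+\e_D^{2}} \ge 1-\e/4$, yielding the claimed inequality.

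There is no genuine obstacle here; the statement is a direct consequence of the explicit formulas for the Poisson probability mass function. The only real content is the compatibility of the two small parameters: $\e$ is set by the FKG argument in Proposition~\ref{FKGPoi} independently of the discretization, so we must choose $\e_D$ small relative to $\e$ (and also small enough for the various other discretization estimates leading to Proposition~\ref{p:saturated}), which is exactly the quantifier order stated in the lemma.
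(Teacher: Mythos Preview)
Your proof is correct and follows exactly the same approach as the paper: compute $\E(X\mid X\le 1)=\dfrac{\e_D^2}{1+\e_D^2}$ for $X$ Poisson with mean $\e_D^2$, and then choose $\e_D$ small enough. The paper's proof is the one-line version of what you wrote.
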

\begin{proof}
Let $X$ be a Poisson variable with mean $\e_D^2$. The proof is completed by observing that $\E(X\mid X\le 1)=\frac{\e_D^2}{1+\e_D^2}$ and choosing $\e_D$ sufficiently small.
\end{proof}
Fix $\e_{D}$ so that the conclusion of Lemma \ref{l:meanPoi} holds. Let us define,
$$\cB_{i}=\left\{\sum_{v\in D_i} |\Pi_v|\le (1-2\e)|D_i|{\e^2_D}\right\}.$$ Observe that the proof of \eqref{e:manyb}, along with Proposition \ref{FKGPoi}, instead of Proposition \ref{FKG}, implies that,
\begin{equation}
\label{e:manybPoi}
\P\left(\sum_{i\in I} \mathbf{1}(\cB_i)> \frac{\e m}{4} \middle| \cL_{\delta}\right)\ge 1-e^{-cn}
\end{equation}
for some constant $c>0$ and for $n$ sufficiently large. 

The analogue of  Lemma \ref{l:bmest} relating $\cB_{i}$ to $\cM_{i}$ in this setting is the following.
\begin{lemma}
\label{l:bmestPoi}
There exists $c=c(\e)>0,$ such that for each $i\in I$ we have,
$$\P(\cB_i \cap \cM_i^c\mid \cL_{\delta}) \le e^{-cn}.$$
\end{lemma}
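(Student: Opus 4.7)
The plan is to combine the stochastic lower bound on $|\Pi_v|$ for slack vertices provided by Lemma~\ref{l:number} with the definition of $\cM_i^c$, which forces at least $(1-\e)|D_i|$ slack vertices on $D_i$, and then apply a standard Chernoff bound. This mirrors the strategy of Lemma~\ref{l:bmest} in the Exponential setting, adapted to the discretized boxes.

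Fix $i\in I$ and work on the $\cF_i$-measurable event $\cM_i^c$, where the set $S\subseteq D_i$ of slack vertices satisfies $|S|\geq (1-\e)|D_i|$. For each $v\in S$ we have $R_v\geq 1$, so by Lemma~\ref{l:number} the conditional law of $|\Pi_v|$ given $\cF_i,\cL_\delta$ stochastically dominates a $\mathrm{Poi}(\e_D^2)$ variable truncated to $[0,R_v]$; by monotonicity of this truncation in its upper limit, it in turn dominates a $\mathrm{Poi}(\e_D^2)$ truncated to $[0,1]$, which is simply a Bernoulli$(p)$ variable with $p=\e_D^2/(1+\e_D^2)$. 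By Lemma~\ref{l:distexcessLPP} these lower bounds are conditionally independent across $v\in D_i$, so $\sum_{v\in D_i}|\Pi_v|$ stochastically dominates a sum $\sum_{v\in S}B_v$ of at least $(1-\e)|D_i|$ i.i.d.\ Bernoulli$(p)$ variables.

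Since $\e_D$ was chosen small enough in Lemma~\ref{l:meanPoi}, we have $p\geq (1-\e/4)\e_D^2$, and therefore
\[
\E\biggl[\sum_{v\in S}B_v\biggr] \;\geq\; (1-\e)(1-\e/4)|D_i|\e_D^2 \;\geq\; \bigl(1-\tfrac{5\e}{4}\bigr)|D_i|\e_D^2,
\]
which exceeds the threshold $(1-2\e)|D_i|\e_D^2$ appearing in $\cB_i$ by a fraction of order $\e$. A standard Chernoff bound for i.i.d.\ Bernoullis then gives, pointwise on $\cM_i^c$,
\[
\P(\cB_i\mid \cF_i,\cL_\delta)\;\leq\;\exp\bigl(-c(\e)\,|S|\,p\bigr)\;\leq\;\exp\bigl(-c'(\e)\,n\e_D\bigr),
\]
since $|S|\,p=\Theta(n/\e_D)\cdot\Theta(\e_D^2)=\Theta(n\e_D)$ for $i\in I$. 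Integrating this $\cF_i$-pointwise estimate over $\cM_i^c$ against the conditional law given $\cL_\delta$ yields the claimed bound $\P(\cB_i\cap\cM_i^c\mid\cL_\delta)\le e^{-cn}$.

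The delicate step is the first one: one must verify that the stochastic domination from Lemma~\ref{l:number} at the minimal allowed value $R_v=1$ still captures essentially the full unconditional mean $\e_D^2$ of $|\Pi_v|$. This is exactly the purpose of the small $\e_D$ chosen via Lemma~\ref{l:meanPoi} (the Bernoulli parameter differs from $\e_D^2$ only by a factor $1+\e_D^2$), which is what allows the $(1-\e)$ fraction of slack vertices to clear the $(1-2\e)|D_i|\e_D^2$ barrier by a margin linear in $n$, so that the Chernoff tail is of order $e^{-cn}$.
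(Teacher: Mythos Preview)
Your proof is correct and follows essentially the same route as the paper's: on $\cM_i^c$ you use Lemmas~\ref{l:distexcessLPP} and~\ref{l:number} to stochastically dominate $\sum_{v\in D_i}|\Pi_v|$ from below by a sum of $(1-\e)|D_i|$ i.i.d.\ Bernoulli$(p)$ variables with $p\ge(1-\e/4)\e_D^2$ (via Lemma~\ref{l:meanPoi}), and then apply a Chernoff bound to beat the $(1-2\e)|D_i|\e_D^2$ threshold. Your write-up is simply more explicit than the paper's about the stochastic-domination chain and the $\Theta(n\e_D)$ scaling of the mean, but the argument is the same.
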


\begin{proof}
Observe that $\cM_i$ is $\cF_{i}$ measurable and it follows from Lemma \ref{l:distexcessLPP} and Lemma \ref{l:number} that (conditioned on $\cF_i$ and $\cL_{\delta}$), on $\cM_i^c,$ the total number of $\Pi$-points in $\cup_{v\in D_i} \B_v$ stochastically dominates the sum of $(1-\e)|D_i|$ many independent copies of $|\Pi_{v}^{\leq 1}|$. Note that the latter is nothing but a sum of $(1-\e)|D_i|$ many i.i.d. Bernoulli variables with mean $(1-\frac{\e}{4})\e^2_D$.
The result now follows from exponential concentration of such a sum.
\end{proof}

We can now finish the proof of Proposition \ref{p:saturated}.

\begin{proof}[Proof of Proposition \ref{p:saturated}]
By Lemma \ref{l:bmestPoi} and a union bound over all $i\in I$ it follows that
$$\P\left(\sum_{i\in I} \mathbf{1}(\cB_i\cap \cM_i^c)=0  \middle| \cL_{\delta}\right)\ge 1-e^{-cn}$$
for some $c>0$ and $n$ sufficiently large. This, together with \eqref{e:manybPoi} completes the proof of the proposition.
\end{proof}
Note that in the Poissonian setting we did not need analogues of Lemmas \ref{l:conditional} and \ref{l:goodb}. This is because  in the former case we were only interested in comparing the total number of saturated boxes and the total number of points on an anti-diagonal, and if the the number of saturated boxes were small, then the number of points was large with exponentially small failure probability which allowed the simple union bound argument used above to work. Whereas in the case of exponential passage times, in Lemma \ref{l:conditional} we compared the total weight on an anti-diagonal and occurrence of the strong concentration phenomenon, where the latter's failure probability is still a constant, precluding an union bound over the indices in $I$.

\section{Extensions to non-integrable settings}
\label{s:general}

So far we have proved our delocalization result for two models that are both exactly solvable (Poissonian LPP on $\R^2$ and Exponential LPP on $\Z^2$). Even though certain specific details of these models made the proofs simpler, they were not essential. Our argument, in its core, does not depend on exact solvability and in this section we provide examples of some general settings to which our results can be adapted.

\subsection{Last passage percolation on $\Z^2$ with general weights}
Consider last passage percolation on $\Z^2$ with general i.i.d.\ vertex weights $X_{v}$ coming from some distribution $F$ on the positive real line. As before let $L(u,v)$ denote the last passage time from $u$ to $v$ for $u\preceq v\in \Z^2$. (We shall make use of the same notations as before for other quantities as well). Under some fairly mild moment conditions on $F$, the law of large numbers result analogous to Theorem \ref{t:lln}, goes through. The following theorem was proved in \cite{M02} (see also \cite{CGGK93,GK94}).
\begin{theorem}
\label{t:lppgen}
Suppose $\int_{0}^{\infty} (1-F(x))^{1/2}~{\rm d}x <\infty$. Then there exists a function $G=G_{F}: \R^2_{+}\to \R_{+}$ such that for each $x,y>0$ we have
$$\lim_{n\to \infty} \frac{1}{n}\E L_{\lfloor nx \rfloor, \lfloor ny \rfloor}= G(x,y).$$
\end{theorem}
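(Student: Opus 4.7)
The strategy is to combine superadditivity of the passage time with a moment-based tail estimate for the sum along a single path; once a linear upper bound is in hand, Fekete's lemma delivers the shape along rational directions and monotonicity extends it to all of $\R_+^2$. Superadditivity is immediate from the definition: for any $u \preceq v \preceq w$ in $\Z_+^2$,
$$L(u, w) \ge L(u, v) + L(v, w),$$
and by translation invariance $L(v, w)$ has the same law as $L(\mathbf{0}, w - v)$. Taking expectations, $a_{m, n} := \E L_{m, n}$ satisfies $a_{m_1 + m_2, n_1 + n_2} \ge a_{m_1, n_1} + a_{m_2, n_2}$.

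First I would fix a rational direction $(x, y) = (p, q)/k$ with $p, q, k \in \N$. The sequence $b_n := a_{np, nq}$ is then superadditive in $n$, so Fekete's lemma gives $b_n / n \to \sup_n b_n / n =: G(x, y) \in (0, \infty]$. To ensure this limit is finite, and thus define $G$ as an honest function on the positive rationals, one needs the a priori linear upper bound $a_{m, n} \le C(m + n)$ for some $C = C(F)$; this is the only place where the moment hypothesis is used. Given such a bound, homogeneity $G(cx, cy) = c G(x, y)$ for rational $c > 0$ is immediate from the definition, and concavity of $G$ on $\Q_+^2$ follows from the two-dimensional superadditivity of $(a_{m, n})$. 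A concave function on a dense subset of the open first quadrant extends uniquely to a continuous concave function on $\R_+^2$, giving $G$. For arbitrary real $(x, y) \in \R_+^2$ I would sandwich $\tfrac{1}{n} \E L_{\lfloor nx \rfloor, \lfloor ny \rfloor}$ between the corresponding normalized expectations along rational rays just below and just above $(x, y)$ (using monotonicity of $L_{m, n}$ in its coordinates), pass to the limit in $n$ first, and then let the rational approximants approach $(x, y)$ via continuity of $G$.

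The hard part will be the linear upper bound $\E L_n \le Cn$. There are $\binom{2n}{n} \le 4^n$ directed paths from $\mathbf{0}$ to $(n, n)$, each carrying a sum of $2n + 1$ i.i.d.\ samples from $F$, so the estimate reduces to a tail bound of the form $\P\bigl(\sum_{i=1}^{2n+1} X_i \ge tn\bigr) \le e^{-\alpha n}$ with rate $\alpha > \log 4$, holding for some threshold $t$. The standard route, as in \cite{CGGK93, GK94, M02}, is a truncation argument: fix a large level $M$ and split each vertex weight into its truncation at $M$ and an excess. The bounded part is handled by classical Cram\'er bounds, whose exponent can be pushed above $\log 4$ by taking $M$ and $t$ sufficiently large. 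The excess part is controlled by the hypothesis $\int_0^\infty (1 - F(x))^{1/2}~\mathrm{d} x < \infty$, which is precisely the condition under which, along any fixed path, the contribution of vertex weights exceeding $M$ is $o(n)$ with a probability bound sharp enough to survive the union bound over all $4^n$ paths. It is known that this square-root moment condition is sharp for the conclusion of the shape theorem; weaker moment assumptions leave $\E L_n / n$ unbounded.
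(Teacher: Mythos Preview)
The paper does not give its own proof of this theorem; it is quoted as background and attributed to \cite{M02} (with earlier roots in \cite{CGGK93,GK94}). Your sketch is the standard argument from that literature: superadditivity plus Fekete along rational rays, combined with a linear upper bound obtained by truncation and a union bound over the $\binom{2n}{n}$ paths, with the square-root moment condition entering exactly to control the large-weight excess against that entropy. So there is nothing to compare against in the paper itself; your outline matches the cited sources.

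One small remark on the sketch: the step ``the contribution of vertex weights exceeding $M$ is $o(n)$ with a probability bound sharp enough to survive the union bound over all $4^n$ paths'' hides the only genuinely delicate computation. The usual implementation is a dyadic decomposition of the excess into levels $[2^j,2^{j+1})$ and a binomial tail bound on the number of exceedances at each level; the hypothesis $\int_0^\infty (1-F(x))^{1/2}\,{\rm d}x<\infty$ is exactly equivalent to $\sum_j 2^j\,\P(X\ge 2^j)^{1/2}<\infty$, which is what makes the summed contributions beat the $4^n$ entropy. If you intend this as more than a sketch, that is the place to fill in.
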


It is easy to see that such a $G$ is invariant under swapping of co-ordinates and is super additive, i.e.,  $G(x_1,y_1)+G(x_2,y_2)\leq G(x_1+x_2,y_1+y_2)$. The last observation implies that the set $\{(x,y):G(x,y)=1\}$ is the graph of a concave function, and it is believed that for a very general class of $F$, it is strictly concave. This model is also supposed to exhibit KPZ fluctuations of order $n^{1/3}$, for a general class of $F$, although it is known only the cases of Exponential, Geometric and Bernoulli distributed weights. Let us restrict ourself to the case where $F$ is continuous, so that almost surely there is a unique geodesic between any pair of points and as before  let $\Gamma_{n}$ denote the geodesic between $\mathbf{0}$ and $\mathbf{n}$. One difference from the exactly solvable cases is that it is not rigorously known that $\Gamma_{n}$ is concentrated around the diagonal line. An analogue of Theorem \ref{t:diag} can however be proved under the assumption of strict concavity of the limit shape and some nice tails of $F$, and hence is believed to be true for a general class of passage time distributions. Therefore it seems natural to consider the question of localization/ delocalization of geodesics in the lower tail large deviations regime.

As mentioned in Section \ref{pwoc}, the large deviation events are less well understood in the non-integrable setting, with no explicit formulae for the large deviation rate functions unlike the exactly solvable models. However it can be shown that the speed of the lower tail large deviations is of order $n^2$ as before. More precisely, let $F$ satisfy the hypothesis of Theorem \ref{t:lppgen} and let $\mu=G(1,1)$. Fix $\delta\in (0,\mu)$. It can be shown, following the argument outlined in Section \ref{s:outline} (using exponential concentration below the mean for sums of i.i.d. positive random variables), that
$$ -\infty < \liminf_{n\to \infty} \frac{\log \P (L_n \leq (\mu-\delta)n)}{n^2} \leq  \limsup_{n\to \infty} \frac{\log \P (L_n \leq (\mu-\delta)n)}{n^2} < 0,$$
(also see \cite{Kesten}).

We shall show, that under certain additional assumptions, the analogue of Theorem \ref{t:deloc} remains valid in this setting. Before making a formal statement we shall need to define two classes of probability measures.

\begin{definition}
\label{d:prob}
Let $\cP$ denote the class of all probability measures with support $[0,\infty),$ with continuous and positive density that satisfy the hypothesis of Theorem \ref{t:lppgen}. Let $\cP_1\subseteq \cP$ denote the class of probability measures with non-increasing density and let $\cP_2\subseteq \cP$ denote the class of all probability measures with log concave density, i.e., density of the form $e^{-V(\cdot)}$
where $V(\cdot)$ is a convex function that is continuously differentiable on $[0,\infty)$ with $V'(0)> -\infty$.
\end{definition}

Our main result in this section is to show that the delocalization result Theorem \ref{t:deloc} remains valid in the setting of last passage percolation with general i.i.d.\ weights as long as the weights come from a distribution in $\cP_1$ or $\cP_2$. Recall the notion of an $\e$-cylinder $\gamma_{n}^{\e}$ around a continuous surjective increasing function $\gamma: [0,1]\to [0,1]$ (see \eqref{sausage100}).

\begin{maintheorem}
\label{t:delocgen}
Let $F$ be a probability measure that is either in $\cP_1$ or in $\cP_2$. Let $\mu=G_{F}(1,1)$ where $G_{F}$ is as in Theorem \ref{t:lppgen}. Fix $\delta\in (0,\mu)$ and $\e>0$, and set $\cL_{\delta}:=\{L_n \leq (\mu-\delta)n\}$. There exists $\e'>0$ such that for all $\gamma: [0,1]\to [0,1]$ surjective and increasing one has
$$\P(\Gamma_{n}\subseteq \gamma_n^{\e'}\mid \cL_{\delta})\leq \e$$
for all $n\in \N$.
\end{maintheorem}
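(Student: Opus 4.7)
My plan is to follow the two-step strategy used for Theorem~\ref{t:deloc}: establish analogs of Theorem~\ref{res1} (super-concentration of the conditional geodesic length) and Theorem~\ref{res2} (anti-concentration of restricted paths) for general $F\in \cP_1\cup \cP_2$, and then combine them by the same union-bound argument that proved Theorem~\ref{t:deloc}. The super-concentration step transfers with essentially no change. Proposition~\ref{FKG} uses only the FKG inequality for product measures, which holds generally; Lemmas~\ref{l:strip0} and~\ref{l:strip} use only Theorem~\ref{t:lppgen} together with exponential concentration for sums of i.i.d.\ positive variables; and the density-based Lemma~\ref{super}, by Remark~\ref{gen304}, requires only that $F$ have a continuous positive density bounded below on compact subsets of $[0,\infty)$, a property enjoyed by every $F\in \cP_1\cup\cP_2$. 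Lemmas~\ref{l:conditional} and~\ref{l:goodb} then carry over verbatim, yielding the super-concentration statement: for every $\e>0$ there is $H>0$ so that $\P(L_n\ge (\mu-\delta)n-H/n\mid \cL_\delta)\ge 1-\e$ for all large $n$.

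For the anti-concentration step, I retain the change of variables $\mathbf{X}_A=Z\cdot \mathbf{Y}_A$ with $Z=\sum_{v\in A}X_v$ and $\mathbf{Y}_A=(X_v/Z)_{v\in A}\in \cS_{|A|}$. A Jacobian computation analogous to Lemma~\ref{l:sample} shows that, conditional on $\mathbf{Y}_A$, $\mathbf{X}_{A^c}$, and $\cL_\delta$, the density of $Z$ is proportional to $p(z)\propto z^{|A|-1}\prod_{v\in A}f(zY_v)$ for $z\in[0,\theta_{\max}]$, with $\theta_{\max}=(\mu-\delta)n/L_n(A;\mathbf{Y}_A)$. Writing $s:=1-H/((\mu-\delta)n^2)$, the event $\{L_n(A)\ge (\mu-\delta)n-H/n\}$ becomes $\{Z\ge s\theta_{\max}\}$. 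When $F\in \cP_1$ the density is non-increasing, so $g(z):=\prod_{v\in A}f(zY_v)$ is non-increasing in $z$; using $p(z)\le g(s\theta_{\max})z^{|A|-1}$ on $[s\theta_{\max},\theta_{\max}]$ and $p(z)\ge g(s\theta_{\max})z^{|A|-1}$ on $[0,s\theta_{\max}]$ reproduces the exponential-case estimate
\[
\P(Z\ge s\theta_{\max}\mid \mathbf{Y}_A,\mathbf{X}_{A^c},\cL_\delta) \;\le\; \frac{1-s^{|A|}}{s^{|A|}} \;\le\; \frac{H\e'}{\mu-\delta}(1+o(1)),
\]
which is $O(\e')$ once $|A|\le \e'n^2$.

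When $F\in \cP_2$, $g$ is merely log-concave and the monotonicity argument fails, but the polynomial factor makes $p$ itself strongly log-concave on $[0,\theta_{\max}]$: writing $V=-\log f$ convex, one has $(\log p)''(z)=-(|A|-1)/z^2 -\sum_v Y_v^2 V''(zY_v)\le -(|A|-1)/z^2$. A Laplace-type estimate at the mode $z^*$ yields $\int_0^{\theta_{\max}}p\gtrsim p(z^*)\,z^*/\sqrt{|A|}$, while trivially $\int_{s\theta_{\max}}^{\theta_{\max}}p\le p(z^*)(1-s)\theta_{\max}$; hence, provided $\theta_{\max}/z^*=O(1)$, the ratio is $O((1-s)\sqrt{|A|})=O(\sqrt{\e'}/n)$, much smaller than $\e$ for $\e'$ small and $n$ large. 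Ruling out the bad environments $(\mathbf{Y}_A,\mathbf{X}_{A^c})$ on which $z^*$ lies atypically close to $\theta_{\max}$ requires a separate tail estimate in which the hypothesis $V'(0)>-\infty$ is used to control the shape of $p$ near the left endpoint of the support: convexity of $V$ combined with $V'(0)>-\infty$ gives uniform lower bounds on $f$ (and hence on $p$) on compact intervals, from which one can show that the set of environments with $\theta_{\max}/z^*$ unbounded contributes conditional probability at most $\e/2$. Combining the resulting anti-concentration with the super-concentration step, choosing $A=\gamma_n^{\e'}$ and applying the combination argument at the end of Section~\ref{s:outline}, completes the proof. The principal difficulty is this last step in the $\cP_2$ case --- quantifying when the conditional mode $z^*$ can approach $\theta_{\max}$ and showing that this is rare enough under $\cL_\delta$, where the log-concavity of $f$ together with $V'(0)>-\infty$ plays the crucial role.
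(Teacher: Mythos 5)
Your overall architecture (super-concentration via Remark~\ref{gen304} plus anti-concentration via the $(\mathbf{X}_{A^c},\mathbf{Y}_A,Z_A)$ decomposition) matches the paper, and your $\cP_1$ case is correct---the monotone-density sandwich reproduces the exponential-case ratio $(1-s^{|A|})/s^{|A|}=O(\e')$ cleanly and without needing the truncation $\{Z_A\le 2m|A|\}$. The $\cP_2$ case, however, has a genuine gap which you partly acknowledge but do not close. The Laplace-at-the-mode bound $\int_0^{\theta_{\max}}p\gtrsim p(z^*)z^*/\sqrt{|A|}$ requires the conditional mode $z^*$ to sit well inside $[0,\theta_{\max}]$, and the final ratio you obtain is on the order of $(1-s)\sqrt{|A|}\,\theta_{\max}/z^*$; this is only small when $\theta_{\max}/z^*$ is bounded. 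You claim that the exceptional environments with $\theta_{\max}/z^*$ unbounded have small conditional probability ``by convexity of $V$ together with $V'(0)>-\infty$,'' but you give no mechanism: the conditional density $p$ depends on the random direction vector $\mathbf{y}_A$, and there is no obvious a priori reason why, under $\cL_\delta$, the mode should typically be a bounded fraction of $\theta_{\max}$. Worse, even if the mode sits at $\theta_{\max}$ itself (perfectly plausible when the barrier truncates the unconditioned density well below its natural peak), the lower bound on $\int_0^{\theta_{\max}}p$ from the Laplace estimate degenerates and your ratio becomes vacuous.

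The paper avoids this entirely by never locating the mode. It truncates to the event $\ce_A=\{Z_A\le 2m|A|\}$ (which by FKG plus the law of large numbers is asymptotically full under $\cL_\delta$), so $\theta_{\max}\le 2m|A|$ deterministically, and then decomposes $\int_{1-1/|A|}^1 t^{|A|-1}\prod_v f(t\theta_{\max}y_v)\,{\rm d}t$ into $n^2/(M|A|)$ subintervals of length $M/n^2$. Each subinterval's contribution is compared to the topmost one via a translation inequality (Lemma~\ref{comparison}): for $F\in\cP_2$ with $V'\ge -C$, one has $\prod_v f(tx_v)\ge e^{-C(1-t)\sum_v x_v}\prod_v f(x_v)$, and since the shift in the argument of $f$ times $\theta_{\max}$ is bounded by $2m$ (using $\theta_{\max}\le 2m|A|$ and shift $\le 1/|A|$), the comparison constant is a fixed $C'=e^{-2mC}$ independent of $\mathbf{y}_A$ and $n$. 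This yields the denominator $\gtrsim (n^2/|A|)\times$ numerator directly, with no mode analysis. That shift-comparison estimate is the single ingredient you are missing; it is what $V'(0)>-\infty$ is actually used for, and it circumvents all the bad-environment casework your plan would require.
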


It will be clear from the proof that the condition that $F$ is in $\cP_1$ or in $\cP_2$ is not optimal, even for our argument. We have not attempted to find the most general class of distributions for which our proof works. With a view of keeping the exposition as simple as possible, our objective was to find a class of distributions that is sufficiently general to be of interest. Observe also that the special case of exponential distribution treated in Theorem \ref{t:deloc} falls into both the classes $\cP_1$ and $\cP_2$.

The proof of this theorem follows along the same lines as that of Theorem \ref{t:deloc}. That is, we first show that conditional  on $\cL_{\delta},$  the length of the geodesic is concentrated at scale $\frac{1}{n}$. As already mentioned in Remark \ref{gen304}, this part of the argument does not use any crucial property of the Exponential distribution and will go through for any $F$ in either $\cP_1$ or $\cP_2$. The anti-concentration part however requires more work since we can no longer exploit nice decoupling properties as in Fact \ref{fact1}. We shall prove the following proposition which is
Theorem \ref{res2} restated in this setting. Recall that for $A\subseteq \llbracket 0,n \rrbracket^2$, $L_n(A)$ denotes the length of the maximal path between $\mathbf{0}$ and $\mathbf{n}$ among all paths completely contained in $A$.

\begin{proposition}
\label{p:resAgeneral}
Let $F$ be a probability distribution in $\cP_1$ or $\cP_2$, and consider the set-up in Theorem \ref{t:delocgen}. Let $H$ and $\e_3>0$ be fixed. Then for $\e'>0$ sufficiently small and for all $A\subseteq \llbracket 0,n \rrbracket^2$ with $|A|\leq \e' n^2$ we have
$$\P\left(L_n(A) \ge (\mu-\delta)n -\frac{H}{n}\mid \cL_{\delta}\right)\le \e_3.$$
\end{proposition}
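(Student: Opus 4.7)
The plan is to mirror the proof of Theorem \ref{res2}, with the independent decomposition from Fact \ref{fact1} replaced by a direct analysis of the conditional density of $S := \sum_{v \in A} X_v$. Under the change of variables $(X_v)_{v \in A} \mapsto (S, \mathbf{Y}_A)$ with $\mathbf{Y}_A := (X_v/S)_{v \in A} \in \mathcal{S}_{|A|}$, the joint density is $s^{|A|-1} \prod_v f(sy_v)$ on $(0,\infty) \times \mathcal{S}_{|A|}$. Unlike the exponential case, the variables $S$ and $\mathbf{Y}_A$ are no longer independent, but the conditional density of $S$ given $\mathbf{Y}_A = \mathbf{y}$, $\mathbf{X}_{A^c}$ and $\cL_\delta$ remains explicit: by the analog of Lemma \ref{l:sample} it is proportional to
$$p(t) \;=\; t^{|A|-1} \prod_{v \in A} f(t y_v)$$
on $[0, \theta_{\max}]$, where $\theta_{\max} = \theta_{\max}(\mathbf{X}_{A^c}, \mathbf{y}) \leq (\mu-\delta)n / L_n(A; \mathbf{y})$ is the largest value of $t$ consistent with $L_n \leq (\mu-\delta)n$. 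The scaling relation $L_n(A) = S \cdot L_n(A; \mathbf{Y}_A)$ shows that $\{L_n(A) \geq (\mu-\delta)n - H/n\} \subseteq \{S \geq s\theta_{\max}\}$ with $s = 1 - H/[(\mu-\delta)n^2]$, so it suffices to bound
$$R \;:=\; \frac{\int_{s\theta_{\max}}^{\theta_{\max}} p(t)\,dt}{\int_0^{\theta_{\max}} p(t)\,dt}$$
by $O(\e')$ uniformly over the conditioning data.

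For $F \in \cP_1$ the factor $q(t) := \prod_v f(ty_v)$ is non-increasing in $t$, and the argument of Theorem \ref{res2} transfers essentially verbatim: in the numerator of $R$ bound $q(t) \leq q(s\theta_{\max})$; lower-bound the denominator by integrating only over $[0, s\theta_{\max}]$ and use $q(t) \geq q(s\theta_{\max})$ there. The $q$-factors cancel, the change of variable $t \mapsto t/\theta_{\max}$ yields $R \leq (1 - s^{|A|})/s^{|A|}$, which is $O(\e')$ using $|A| \leq \e' n^2$ and $1 - s = O(1/n^2)$. For $F \in \cP_2$ with $f = e^{-V}$, $p$ is log-concave on $[0, \theta_{\max}]$ since both $t^{|A|-1}$ and $\prod_v f(ty_v)$ are log-concave in $t$. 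The plan is to split on the mode $t^*$ of $p$: if $t^* \geq \theta_{\max}$ then $p$ is non-decreasing on $[0, \theta_{\max}]$ and the $\cP_1$-style comparison applies; otherwise one exploits that $t^{|A|-1}$ is overwhelmingly concentrated in a top window of width $O(\theta_{\max}/|A|)$ below $\theta_{\max}$, on which $\log q$ varies by at most a constant for typical $\mathbf{y}$ (using $y_v = O(1/|A|)$ in $\ell_\infty$ together with local boundedness of $V'$ near the origin, which is ensured by $V'(0) > -\infty$). On this window $q$ is therefore essentially constant and the computation reduces to the $\cP_1$ case up to a multiplicative constant.

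The hardest step is the uniform control of the log-derivative $\sum_v y_v V'(ty_v)$ in the log-concave case: when $V'$ grows rapidly at infinity (e.g.\ $V(x) = x^2$) and $\mathbf{y}$ is atypical with mass concentrated on a few vertices, this sum can be very large and the typicality argument must be augmented. I would handle this by decomposing the $\mathbf{y}$-space into a ``good'' set on which the above argument closes and a ``bad'' set of small conditional probability under $\cL_\delta$, controlled separately via a crude tail bound deduced from log-concavity of $p$. As an alternative avoiding the $(S, \mathbf{Y}_A)$ parametrization altogether, one can use a shift argument on $(X_v)_{v \in A}$ directly: translating all these weights by the same $\tau$ shifts $L_n(A)$ by exactly $(2n+1)\tau$ because every directed path in $A$ has $2n+1$ vertices, and for both $\cP_1$ and $\cP_2$ the Radon--Nikodym derivative of the shifted measure with respect to the original is controllable, yielding anti-concentration by packing disjoint translated copies of the window $[(\mu-\delta)n - H/n, (\mu-\delta)n]$ into a larger interval.
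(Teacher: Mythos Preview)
Your framework (the $(S,\mathbf{Y}_A)$ reparametrization, the conditional density $p(t)=t^{|A|-1}\prod_v f(ty_v)$ on $[0,\theta_{\max}]$, and the reduction of the event to $\{S\ge s\theta_{\max}\}$) is exactly the paper's, and your treatment of $\cP_1$ is correct and equivalent to what the paper does.

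The $\cP_2$ part, however, has a real gap. Your mode--split is not helpful: when $t^*\ge\theta_{\max}$, so that $p$ is non-decreasing on $[0,\theta_{\max}]$, the $\cP_1$-style comparison does \emph{not} apply (that argument used monotonicity of $q$, not of $p$, and in the opposite direction). More importantly, your control on the variation of $\log q$ over the top window relies on ``$y_v=O(1/|A|)$ in $\ell_\infty$'' and ``local boundedness of $V'$ near the origin'', neither of which is available in general: individual $y_v$ can be large, and even when they are not, $t y_v$ need not be near the origin unless you also know $\theta_{\max}=O(|A|)$. Your proposed good/bad decomposition of $\mathbf{y}$-space does not fix this, because on the ``good'' set you still need a bound on $\theta_{\max}$ to keep $t y_v$ in a fixed compact set.

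The paper closes this cleanly with two observations you are missing. First, convexity of $V$ gives the \emph{global} lower bound $V'(x)\ge V'(0)=:-C>-\infty$ for all $x\ge 0$, hence for any $0<t<1$ and any $(x_v)$ one has $\prod_v f(x_v)\big/\prod_v f(tx_v)\le e^{C(1-t)\sum_v x_v}$; no typicality of $\mathbf{y}$ is needed. Second, one conditions additionally on the high-probability event $\ce_A=\{Z_A\le 2m|A|\}$ (where $m=\E X$), which by FKG has $\P(\ce_A^c\mid\cL_\delta)\to 0$, and redefines $\theta_{\max}$ with the cap $\theta_{\max}\le 2m|A|$. One then compares $\int_s^1$ not to $\int_0^s$ but to $\int_{1-1/|A|}^1$, decomposed into $\sim n^2/(M|A|)$ sub-intervals of width $(1-s)$: over any such sub-interval the ratio of $q$-values to those on $[s,1]$ is at least $e^{-C\cdot(1/|A|)\cdot\theta_{\max}}\ge e^{-2mC}$, a constant independent of $|A|$ and $\mathbf{y}$. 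This yields $R\le O(M|A|/n^2)=O(\e')$ uniformly, with no case analysis and no decomposition of $\mathbf{y}$-space. Your shift alternative would need the same Radon--Nikodym control and additionally has to deal with weights becoming negative under downward shifts.
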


As in the application of Fact \ref{fact1} in Lemma \ref{l:sample}, fixing $n,$ we study the field $\mathbf{X}=(X_v: v \in \llbracket 0,n \rrbracket^2)=(\mathbf{X}_A,\mathbf{X}_{A^c})$ through the
triple $(\mathbf{X}_{A^c},\mathbf{Y}_{A}, Z_{A})$ where all the elements were defined during the proof of Lemma \ref{l:sample}.

By hypothesis $(X_v: v \in \llbracket 0,n \rrbracket^2)$ are distributed as i.i.d. random variables following a distribution $F \in \cP_{1} \cup \cP_2$, say with mean $m$ and density, $f(\cdot)$.
A  simple change of variable shows that conditioning on $(\mathbf{X}_{A^c},\mathbf{Y}_{A})=(\mathbf{x}_{A^c},\mathbf{y}_{A}),$  the density of $Z_A$ at any $z>0$ is proportional to
\begin{equation}\label{cond243}
z^{|A|-1}\prod_{v\in A}f(zy_v).
\end{equation}
Note that given $\mathbf{X}_{A^c}, \mathbf{Y}_{A},$ the quantities $L_n$ and $L_n(A)$, are non-decreasing and  strictly increasing functions of $Z_A$ respectively. As before
we will call $(\mathbf{x}_{A^c}, \mathbf{y}_{A})$ as compatible with $\cL_{\delta}$, if there exists some $z_{A}>0,$ such that $(\mathbf{x}_{A^c}, \mathbf{y}_{A},z_A)\in \cL_{\delta},$ (note that this in fact is just a property of $\mathbf{x}_{A^c}.$)

Now for any $\mathbf{y}_{A}\in \cS_{|A|}$ (recall that $\cS_{|A|}$ is the simplex of dimension $|A|-1$) and compatible $\mathbf{x}_{A^c}$ define
\begin{equation}\label{def203}
\theta_{\max}:=\theta_{\max}(\mathbf{y}_{A}, \mathbf{x}_{A^c}):=\min\left(\sup\{\theta:(\theta\mathbf{y}_{A}, \mathbf{x}_{A^c})\in \cL_{\delta}\},2m|A|\right),
\end{equation}
where $\theta\mathbf{y}_{A}=(\theta y_{v}: v \in A)$ and $m$ is the mean of the distribution $F$. We now have the following lemma analogous to Lemma \ref{l:sample}.
\begin{lemma}
\label{l:samplegeneral}
For any $\mathbf{y}_{A}\in \cS_A$ and compatible $\mathbf{x}_{A^c},$
 conditional on $(\mathbf{Y}_{A}=\mathbf{y}_{A}, \mathbf{X}_{A^c}=\mathbf{x}_{A^c})$ and the events $\{Z_A\le 2m |A|\}$, and $\cL_\delta$: the distribution of $Z_{A}$ is supported on $[0,\theta_{\max}]$ and has the following density at $z\in (0,\theta_{\max})$:
$$\frac{z^{|A|-1}\prod_{i=1}^{|A|}f(zy_v)}{\int_{0}^{\theta_{\max}}w^{|A|-1}\prod_{i=1}^{|A|} f(wy_v){\rm{d}}w}.$$
\end{lemma}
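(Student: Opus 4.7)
The plan is to derive the stated density by a change of variables, and then observe that the conditioning events take a particularly simple form in the $(Z_A, \mathbf{Y}_A)$ coordinates. First, I would compute the joint density of the triple $(\mathbf{X}_{A^c}, \mathbf{Y}_A, Z_A)$ starting from the i.i.d.\ joint density $\prod_{v \in A^c} f(x_v) \prod_{v \in A} f(x_v)$ of $\mathbf{X}$ and applying the invertible change of variables $\mathbf{X}_A \mapsto (Z_A, \mathbf{Y}_A)$ on $(0,\infty)^{|A|}$, where $Z_A > 0$ parametrizes the radial direction and $\mathbf{Y}_A \in \cS_{|A|}$ parametrizes the angular direction on the simplex. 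A standard Jacobian calculation (identical to the one that underlies Fact~\ref{fact1}) produces the factor $z^{|A|-1}$, so the joint density of $(\mathbf{X}_{A^c}, \mathbf{Y}_A, Z_A)$ with respect to the product of Lebesgue measure on $[0,\infty)^{|A^c|}$, surface measure on $\cS_{|A|}$, and Lebesgue measure on $(0,\infty)$ equals
\[
\Bigl(\prod_{v \in A^c} f(x_v)\Bigr) \cdot z^{|A|-1} \prod_{v \in A} f(z y_v).
\]

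Next, I would extract the conditional density of $Z_A$ given $(\mathbf{Y}_A = \mathbf{y}_A, \mathbf{X}_{A^c} = \mathbf{x}_{A^c})$ by dividing by the appropriate marginal; the factor in $\mathbf{x}_{A^c}$ cancels, so the unconditional (i.e., without the two events) conditional density at $z > 0$ is proportional to $z^{|A|-1} \prod_{v \in A} f(z y_v)$, recovering \eqref{cond243}.

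The final step is to incorporate the two further conditioning events. Having fixed $\mathbf{Y}_A = \mathbf{y}_A$ and $\mathbf{X}_{A^c} = \mathbf{x}_{A^c}$, the weights $(X_v)_{v \in A}$ are just $(z y_v)_{v \in A}$ and the passage time $L_n$ is a non-decreasing function of $z$ (increasing $z$ only enlarges each vertex weight inside $A$ and leaves the weights outside $A$ unchanged). Consequently the event $\cL_\delta = \{L_n \le (\mu-\delta)n\}$, viewed as a constraint on $Z_A$, is exactly the half-line $\{Z_A \le \sup\{\theta : (\theta \mathbf{y}_A, \mathbf{x}_{A^c}) \in \cL_\delta\}\}$; intersecting with $\{Z_A \le 2m|A|\}$ gives $\{Z_A \le \theta_{\max}\}$ with $\theta_{\max}$ precisely as in \eqref{def203}. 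Conditioning on an event of the form $\{Z_A \le \theta_{\max}\}$ simply truncates the support and renormalizes, yielding the stated density
\[
\frac{z^{|A|-1} \prod_{v \in A} f(z y_v)}{\int_0^{\theta_{\max}} w^{|A|-1} \prod_{v \in A} f(w y_v)\, \mathrm{d}w}
\]
on $(0, \theta_{\max})$.

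Almost every step is routine; the only potentially delicate point is verifying that $L_n$ is indeed monotone in $Z_A$ along the ray $z \mapsto z \mathbf{y}_A$ with the exterior environment frozen, which follows immediately because any directed path from $\mathbf{0}$ to $\mathbf{n}$ has non-negative weight contribution from $A$, so increasing $z$ weakly increases the weight of every such path and hence weakly increases their maximum. I do not anticipate any serious obstacle beyond this elementary observation, and no smoothness or log-concavity of $f$ is needed at this stage, since those assumptions from $\cP_1 \cup \cP_2$ are only used later to extract anti-concentration from the expression displayed above.
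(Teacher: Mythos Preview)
Your proposal is correct and follows exactly the approach the paper has in mind: the paper's proof is the single line ``straightforward consequence of \eqref{cond243} and the definition of $\theta_{\max}$,'' and you have simply unpacked both ingredients --- deriving \eqref{cond243} via the Jacobian change of variables and then using monotonicity of $L_n$ in $Z_A$ to reduce the two conditioning events to the truncation $\{Z_A\le\theta_{\max}\}$. There is no substantive difference between your argument and the paper's.
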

\begin{proof}
The proof is a straightforward consequence of \eqref{cond243} and the definition of $\theta_{\max}.$
\end{proof}

Before the proof of Proposition \ref{p:resAgeneral} we need another short lemma. Let us abbreviate the event $\{Z_A\le 2m |A|\}$ by $\ce_{A}$.

\begin{lemma}
\label{prep23}
For any $\by_A\in \cS_A$ and compatible $\bx_{A^c},$
\begin{align}\label{exp23}
\P\left(L_n(A) \ge (\mu-\delta)n -\frac{H}{n}\middle| \by_A,  \bx_{A^c}, \ce_{A}, \cL_{\delta} \right) \le \P\left(Z_{A}\ge \theta_{\max}(1-\frac{M}{n^2})\middle | \by_A, \bx_{A^c}, \ce_{A}, \cL_{\delta}\right)
\end{align}
where $M=\frac{H}{\mu-\delta}.$
\end{lemma}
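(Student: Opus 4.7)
The plan is to exploit the change of variables $(\bX_{A^c}, \bX_A) \mapsto (\bX_{A^c}, \bY_A, Z_A)$: once $\bx_{A^c}$ and $\by_A$ are fixed, each weight in $A$ becomes $X_v = Z_A y_v$, so every path weight turns into an affine function of the single scalar $Z_A$. Concretely, for any directed path $\gamma$ from $\mathbf{0}$ to $\mathbf{n}$, its total weight is $Z_A \sum_{v \in \gamma \cap A} y_v + \sum_{v \in \gamma \cap A^c} x_v$; taking maxima, $L_n$ is piecewise linear, convex and non-decreasing in $Z_A$, while the restricted quantity $L_n(A)$ is exactly linear in $Z_A$:
$$L_n(A) = Z_A \cdot L_n(A;\by_A), \qquad \text{where } L_n(A;\by_A) := \max_{\gamma \subseteq A} \sum_{v \in \gamma} y_v$$
depends only on $\by_A$ (here the maximum is over directed paths from $\mathbf{0}$ to $\mathbf{n}$ that lie entirely in $A$).

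Next I would unpack $\theta_{\max}$: by monotonicity of $L_n$ in $Z_A$, the conditioning event $\cL_\delta \cap \ce_A$ is precisely $\{Z_A \le \theta_{\max}\}$. Applying the sandwich $L_n(A) \le L_n \le (\mu-\delta)n$ at the endpoint $Z_A = \theta_{\max}$ gives
$$\theta_{\max} \cdot L_n(A;\by_A) \;\le\; (\mu-\delta)n, \qquad \text{i.e.,} \qquad L_n(A;\by_A) \;\le\; \frac{(\mu-\delta)n}{\theta_{\max}}.$$
The case where the cap $\theta_{\max} = 2m|A|$ coming from $\ce_A$ is the active one (rather than the $\cL_\delta$ boundary) is handled identically by the same monotonicity argument.

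Combining the two observations, on the event $\{L_n(A) \ge (\mu-\delta)n - H/n\}$ we deduce
$$Z_A \;\ge\; \frac{(\mu-\delta)n - H/n}{L_n(A;\by_A)} \;\ge\; \frac{(\mu-\delta)n - H/n}{(\mu-\delta)n/\theta_{\max}} \;=\; \theta_{\max}\!\left(1 - \frac{H}{(\mu-\delta)n^2}\right) \;=\; \theta_{\max}\!\left(1-\frac{M}{n^2}\right),$$
which is exactly the set inclusion whose conditional probability bound is the content of \eqref{exp23}. There is essentially no obstacle: the statement is, after unwinding definitions, a direct consequence of the $(\bY_A, Z_A)$ parametrization and the monotonicity of $L_n$ in $Z_A$. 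The only point meriting a brief comment is the degenerate case $L_n(A;\by_A) = 0$ (no directed $\mathbf{0}\to\mathbf{n}$ path is contained in $A$), in which case $L_n(A) \equiv 0$ and the left-hand event is empty for all sufficiently large $n$, so the inequality is trivial.
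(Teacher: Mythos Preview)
Your argument is correct and follows essentially the same route as the paper: both use the factorization $L_n(A)=Z_A\cdot L_n(A;\by_A)$ together with the bound $\theta_{\max}\,L_n(A;\by_A)\le(\mu-\delta)n$ (obtained from $L_n(A)\le L_n\le(\mu-\delta)n$ at $Z_A=\theta_{\max}$) to deduce the set inclusion. Your additional remarks on the affine structure, the cap case $\theta_{\max}=2m|A|$, and the degenerate case $L_n(A;\by_A)=0$ are helpful elaborations but do not change the underlying approach.
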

\begin{proof} Recalling the notation $L_n(A; \bY_{A})$ from Lemma \ref{l:sample}, clearly, $L_n(A)=L_n(A;\bX_{A})=Z_AL_n(A;\bY_{A}).$
Thus by definition,
\begin{equation}\label{ub234}
\theta_{\max}L_n(A;\bY_{A})\le (\mu-\delta)n.
\end{equation}
Hence  \begin{align*}
L_n(A;\bX_{A})&\ge (\mu-\delta)n -\frac{H}{n}
\implies Z_AL_n(A;\bY_{A})\ge (\mu-\delta)n -\frac{H}{n},\\
&\overset{\eqref{ub234}}{\implies}\frac{Z_A}{\theta_{\max}}\ge \frac{(\mu-\delta)n -\frac{H}{n}}{(\mu-\delta)n}.
\end{align*}
\end{proof}

We are now ready to prove Proposition \ref{p:resAgeneral}.
\begin{proof}[Proof of Proposition \ref{p:resAgeneral}] Let $s=1-\frac{M}{n^2}$ where $M$ is defined in the statement of Lemma \ref{prep23}.
It follows that
\begin{eqnarray}
\label{e:resA12}
\P\left(L_n(A) \ge (\mu-\delta)n -\frac{H}{n}\middle| \cL_{\delta}\right)&\le
\P\left(\ce_{A}\middle| \cL_{\delta}\right)\P\left(L_n(A) \ge (\mu-\delta)n -\frac{H}{n}\middle| \ce_{A},\cL_{\delta}\right)+\P(\ce_{A}^{c}\mid \cL_{\delta})\\ \nonumber
&\overset{\eqref{exp23}}{\le}  \P(\ce_{A}\mid \cL_{\delta})\E \biggl[ \P(Z_{A}\geq s\theta_{\max}\mid \mathbf{y}_{A}, \bx_{A^c}, \ce_{A}, \cL_{\delta}) \biggr]+\P(\ce_{A}^{c}\mid \cL_{\delta}),
\end{eqnarray}
where the expectation is over the distribution of $(\mathbf{y}_{A}, \bx_{A^c})$ conditional on the events $\ce_{A}$ and $\cL_{\delta}$. Using Lemma \ref{l:samplegeneral}, it follows that
\begin{align*}
\P(Z_{A}\geq s\theta_{\max}\mid (\mathbf{y}_{A}, \bx_{A^c}), \ce_{A}, \cL_{\delta}) &=\dfrac{\int_{s\theta_{\max}}^{\theta_{\max}}z^{|A|-1}\prod_{v\in A}f(zy_v){\rm{d}}z}{\int_{0}^{\theta_{\max}}z^{|A|-1}\prod_{v\in A} f(zy_v){\rm{d}}z}.
\end{align*}
Doing the change of variable $\frac{z}{\theta_{\max}}\mapsto t,$ we get,
\begin{align}
\nonumber
\dfrac{\int_{s\theta_{\max}}^{\theta_{\max}}z^{|A|-1}\prod_{v\in A}f(zy_v){\rm{d}}z}{\int_{0}^{\theta_{\max}}z^{|A|-1}\prod_{v\in A} f(zy_v){\rm{d}}z}&=
\dfrac{\int_{s}^{1}t^{|A|-1}\prod_{v\in A} f(t\theta_{\max}y_v){\rm{d}}t}{\int_{0}^{1}t^{|A|-1}\prod_{v\in A} f(t\theta_{\max}y_v){\rm{d}}t}\\
\label{requiredbnd12}
&\le \dfrac{\int_{s}^{1}t^{|A|-1}\prod_{v\in A} f(t\theta_{\max}y_v){\rm{d}}t}{\int_{1-\frac{1}{|A|}}^{1}t^{|A|-1}\prod_{v\in A} f(t\theta_{\max}y_v){\rm{d}}t}.
\end{align}
To complete the proof, we will show that $\dfrac{\int_{s}^{1}t^{|A|-1}\prod_{v\in A} f(t\theta_{\max}y_v){\rm{d}}t}{\int_{1-\frac{1}{|A|}}^{1}t^{|A|-1}\prod_{v\in A} f(t\theta_{\max}y_v){\rm{d}}t}$ is small, when $|A|=\e' n^2$ for some small $\e'$.
Note that we can ignore the term $t^{|A|-1},$ in the numerator and denominator since it is $\Theta(1)$ when $t\in [1-\frac{1}{|A|},1].$
Now note that,
\begin{align}\label{decompose}
\int_{1-\frac{1}{|A|}}^{1}\prod_{v\in A} f(t\theta_{\max}y_v){\rm{d}}t=\sum_{i=0}^{\frac{n^2}{M|A|}-1}\int_{1-(i+1)\frac{M}{n^2}}^{1-i\frac{M}{n^2}}\prod_{v\in A} f(t\theta_{\max}y_v){\rm{d}}t,
\end{align} where to avoid rounding issues we assume $\frac{n^2}{M|A|}$ is an integer.
If $F\in \cP_1,$ then just using monotonicity $i\in\{0,1,\ldots,\frac{n^2}{M|A|}-1\}$
 we have \begin{align*}
\int_{1-(i+1)\frac{M}{n^2}}^{1-i\frac{M}{n^2}}\prod_{v\in A} f(t\theta_{\max}y_v){\rm{d}}t\ge\int_{1-\frac{M}{n^2}}^{1}\prod_{v\in A} f(t\theta_{\max}y_v){\rm{d}}t
\end{align*}and hence the required bound on the RHS in \eqref{requiredbnd12} follows.
However to prove a similar bound when  $F\in \cP_2,$ note that for $i\in\{0,1,\ldots,\frac{n^2}{M|A|}-1\},$
\begin{align*}
\int_{1-(i+1)\frac{M}{n^2}}^{1-i\frac{M}{n^2}}\prod_{v\in A} f(t\theta_{\max}y_v){\rm{d}}t&\ge \left[\inf_{t\in [1-(i+1)\frac{M}{n^2},1-i\frac{M}{n^2}]}\frac{\prod_{v\in A} f(t\theta_{\max}y_v)}{\prod_{v\in A}f((t+\frac{iM}{n^2})\theta_{\max}y_v)}\right]\int_{1-\frac{M}{n^2}}^{1}\prod_{v\in A} f(t\theta_{\max}y_v){\rm{d}}t,\\
&\ge C\int_{1-\frac{M}{n^2}}^{1}\prod_{v\in A} f(t\theta_{\max}y_v){\rm{d}}t.
\end{align*}
for some $C>0$ (not depending on $|A|$)
where the last inequality follows from Lemma \ref{comparison} below and that  $\theta_{\max}\le 2m|A|.$
Hence whenever $F\in \cP_1\cup \cP_2,$ using the above bound and \eqref{decompose},
\begin{align*}
\dfrac{\int_{{1-\frac{M}{n^2}}}^{1}t^{|A|-1}\prod_{v\in A} f(t\theta_{\max}y_v){\rm{d}}t}{\int_{1-\frac{1}{|A|}}^{1}t^{|A|-1}\prod_{v\in A} f(t\theta_{\max}y_v){\rm{d}}t}\le \frac{1}{C\frac{n^2}{M|A|}}
= O(\frac{|A|}{n^2}).
\end{align*}
Plugging the above in \eqref{e:resA12},  along with the fact that $\P(\ce_{A}^{c}\mid \cL_{\delta})$ goes to $0$ completes the proof. To see that $\P(\ce_{A}^c \mid \cL_{\delta})$ goes to zero, note that by the FKG inequality $$\P(Z_A\ge 2m|A|\mid \cL_{\delta})\le \P(Z_A\ge 2m|A|)$$ and the latter goes to zero by law of large numbers as $\E(Z_A)=m|A|.$
\end{proof}

\begin{lemma}\label{comparison} For any density function $f$ corresponding to a probability measure in $\cP_2,$ then there exists $C>0$ such that for any $\bx_A$, and any $0<t<1,$
\begin{align*}
\frac{\prod_{v\in A} f(x_v)}{\prod_{v\in A} f(tx_v)}<e^{C(1-t)z_A}.
\end{align*}
\end{lemma}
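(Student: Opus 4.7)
The plan is to exploit the log-concavity of $f$ directly. Writing $f = e^{-V}$ where $V$ is convex and continuously differentiable on $[0,\infty)$ with $V'(0) > -\infty$, the ratio becomes
\[
\frac{\prod_{v\in A} f(x_v)}{\prod_{v\in A} f(tx_v)} = \exp\Bigl(\sum_{v\in A}\bigl[V(tx_v) - V(x_v)\bigr]\Bigr),
\]
so it suffices to show $\sum_{v\in A}[V(tx_v) - V(x_v)] \leq C(1-t)z_A$ for some constant $C$ depending only on $f$.

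The key observation is that convexity of $V$ gives the tangent line inequality $V(x_v) \geq V(tx_v) + V'(tx_v)(1-t)x_v$, which rearranges to
\[
V(tx_v) - V(x_v) \leq -V'(tx_v)\,(1-t)\,x_v.
\]
Since $V$ is convex on $[0,\infty)$, $V'$ is non-decreasing, so $V'(tx_v) \geq V'(0)$ for all $v$ (using $tx_v \geq 0$). Setting $C := \max(-V'(0), 0)$, which is finite by the hypothesis $V'(0) > -\infty$, we get $-V'(tx_v) \leq C$ uniformly in $v$ and in the choice of $\bx_A$.

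Summing the per-coordinate bound $V(tx_v) - V(x_v) \leq C(1-t)x_v$ over $v \in A$ yields $\sum_v [V(tx_v) - V(x_v)] \leq C(1-t)z_A$, and exponentiating gives the claimed inequality (with strict inequality following by replacing $C$ by any strictly larger constant, or noting strict convexity of the exponential). There is no real obstacle here: the whole argument is the one-line tangent-line inequality for convex functions combined with monotonicity of $V'$, and the hypothesis $V'(0) > -\infty$ is precisely what is needed to control the constant uniformly.
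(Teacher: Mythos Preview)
Your proof is correct and follows essentially the same approach as the paper: both reduce to bounding $\sum_{v}[V(tx_v)-V(x_v)]$ by $C(1-t)z_A$ using that $\inf_{x\ge 0} V'(x)=V'(0)>-\infty$. The only cosmetic difference is that the paper invokes the mean value theorem to write $V(tx_v)-V(x_v)=-V'(\xi_v)(1-t)x_v$ for some $\xi_v\in(tx_v,x_v)$, whereas you use the tangent-line inequality for convex functions; both then bound $-V'(\cdot)$ by $-V'(0)$ via monotonicity of $V'$.
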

\begin{proof}Since $\log f(\cdot)=-V(\cdot),$ it suffices to show that
$$\sum_{v\in A}V(tx_v)-\sum_{v\in A}V(x_v)<C(1-t)\sum_{v\in A}x_v.$$
Now  note that by hypothesis $\inf_{x\in \R_+}V'(x)=-C>-\infty.$  The proof now is a straightforward consequence of mean value theorem.
\end{proof}

We can now complete the proof of Theorem \ref{t:delocgen}.

\begin{proof}[Proof of Theorem \ref{t:delocgen}]
Fix a probability distribution $F$ either in $\cP_1$ or in $\cP_2$, $\delta\in (0,\mu)$ and $\e>0$. Fix also an increasing surjective function $\gamma: [0,1]\to [0,1]$. Arguing verbatim as in the proof of Theorem \ref{res1} by Remark \ref{gen304},  it follows that there exists $H$ such that
$$\P\left(L_n\geq (\mu-\delta)n -\frac{H}{n}\mid \cL_{\delta}\right)\geq 1-\e/2.$$
Now by Proposition \ref{p:resAgeneral}, one can choose $\e'$ sufficiently small so that  $|\gamma_n^{\e'}|\leq 2\e' n^2$ and
$$\P\left(L_n(\gamma_n^{\e'})\geq (\mu-\delta)n -\frac{H}{n}\mid \cL_{\delta}\right)\leq \e/2.$$
Combining the above we get $\P(L_n\neq L_n(\gamma_n^{\e'})\mid \cL_{\delta})\ge 1-\e.$
\end{proof}

\subsection{Last passage percolation on $\Z^d$}\label{highdim}
Another setting to which our argument extends in a rather straightforward way is that of directed last passage percolation on higher dimensional Euclidean lattices $\Z^d$ with i.i.d. weights on the vertices. The last passage percolation model can be defined on $\Z^d$ for $d>2$, by extending the definition on $\Z^2$ in an obvious way. Given positive i.i.d.\ weights $\{X_v:v\in \Z^d\}$ one defines the last passage time $L_n$ from $\mathbf{0}=(0,\ldots, 0)$ to $\mathbf{n}=(n,\ldots , n)$ maximizing the weight over all paths that are co-ordinate wise non-decreasing and the weight of a path as before,  is the sum of the weights of the vertices on it. Let $\Gamma_n$ denote the maximizing path (which is unique if $F$ is continuous which will be the case we shall be restricted to).

The law of large number result (i.e., the analogue of Theorem \ref{t:lppgen}) holds in this case provided $\int_{0}^{\infty} (1-F(x))^{1/d}~{\rm d}x < \infty$ (see \cite{M02}). Let us define classes of probability distributions $\cP_1^{(d)}$ and $\cP_2^{(d)}$ exactly as in Definition \ref{d:prob} except that we now also require the above tail condition. Let $\mu=\mu_{F}:=\lim_{n\to \infty} \frac{\E L_n}{n}$. For any $\delta\in (0,\mu)$ the following is known about the lower tail large deviation event $L_n \leq (\mu-\delta)n$
$$ -\infty < \liminf_{n\to \infty} \frac{\log \P (L_n \leq (\mu-\delta)n)}{n^d} \leq  \limsup_{n\to \infty} \frac{\log \P (L_n \leq (\mu-\delta)n)}{n^d} < 0.$$
This can be proved by an easy adaptation of the argument in \cite{Kes86} for first passage percolation. As before let $\cL_{\delta}:=\{L_n\leq (\mu-\delta)n\}$ denote this large deviation event.

Finally for a fixed continuous increasing function $\gamma: [0,1]\to [0,1]^d$ such that $\gamma(0)=\mathbf{0}$ and $\gamma(1)=\mathbf{1},$  one can define the $\e$-cylinder around $\gamma$ to be the set of all points at distance at most $\e$ from the image of $\gamma,$ i.e., $\gamma([0,1])$
and denote by $\gamma_{n}^{\e}$, the image of the $\e-$cylinder under the scaling map  $\bx \mapsto n\bx$. The following is our delocalization result in the higher dimensional setting whose proof is identical to that of Theorem \ref{t:delocgen} and hence shall be omitted (Note that the strong concentration for $L_n$ analogous to Theorem \ref{res1}  now occurs at scale $\frac{1}{n^{d-1}}.$).

\begin{maintheorem}
\label{t:delochd}
Let $F$ be a probability measure that is either in $\cP_1^{(d)}$ or in $\cP_2^{(d)}$. Consider directed last passage percolation on $\Z^d$ ($d>2$) and let $\mu$ be as above. Fix $\delta\in (0,\mu)$ and $\e>0$. There exists $\e'>0$ such that for all $\gamma$ as above:  one has
$$\P(\Gamma_{n}\subseteq \gamma_n^{\e'}\mid \cL_{\delta})\leq \e$$
for all $n\in \N$.
\end{maintheorem}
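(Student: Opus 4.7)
The plan is to mirror the proof of Theorem \ref{t:delocgen} step by step, keeping the same two-pronged strategy (strong concentration of the conditional geodesic length, plus anti-concentration of path lengths restricted to small sets), but adjusted so that the critical scales reflect the $d$-dimensional geometry. The $i$-th anti-diagonal hyperplane should now be defined as $D_i = \{v \in \llbracket 0,n\rrbracket^d : \sum_k v_k = i\}$, which for $i$ in a window of width $\Theta(n)$ around $dn/2$ has cardinality $\Theta(n^{d-1})$. Every coordinate-monotone path from $\mathbf{0}$ to $\mathbf{n}$ still meets each such $D_i$ in exactly one vertex, so the conditional-on-$\cF_i$ analysis carries over verbatim.

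First I would prove the $d$-dimensional analogue of Proposition \ref{FKG}: conditional on $\cL_\delta$, with probability $1 - e^{-cn^{d-1}}$, the total weight in $\llbracket 0,n\rrbracket^d$ is at most $(1-4\e)\mu\, n^d$ for some $\e = \e(\delta) > 0$. The proof is the same FKG argument used in Lemma \ref{l:strip}: partition a neighborhood of the main diagonal into $\Theta(n^{d-1})$ disjoint translates of a tube of constant cross-section that each support (via superadditivity) a path of length at least $(\mu - \delta/2)n$, and then use the FKG coupling of Lemma \ref{fkg203} to force each of these tubes to contribute a macroscopic deficit. Summing over tubes gives the advertised $\Omega(n^d)$ total deficit. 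Next, I would establish the strong concentration analogue of Theorem \ref{res1} at the rescaled precision $H/n^{d-1}$: along an anti-diagonal $D_i$ with $|D_i| = \Theta(n^{d-1})$, the weights conditional on $\cF_i \cap \cL_\delta$ are independent truncated variables, so by Remark \ref{gen304} combined with a calculation of the density near the barrier $R_v$, a linear (in $n^{d-1}$) number of vertices $v$ with $R_v \leq M$ forces
\[
L_n \geq (\mu-\delta)n - \frac{H}{n^{d-1}}
\]
with probability $\geq 1 - \e$ once $H$ is large. The passages through Lemmas \ref{l:conditional} and \ref{l:goodb} go through without change since they only used that the anti-diagonals have size linear in the "relevant" parameter (here $n^{d-1}$).

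Next I would adapt Proposition \ref{p:resAgeneral} to this setting. For any deterministic $A \subseteq \llbracket 0,n\rrbracket^d$ with $|A| \leq \e' n^d$, decompose $\bX_A = Z_A \bY_A$ as in Fact \ref{fact1}. Conditional on $(\bX_{A^c}, \bY_A)$, the density of $Z_A$ is given by \eqref{cond243}, and the event $\cL_\delta$ truncates $Z_A$ at some $\theta_{\max} \leq 2m|A|$. Repeating verbatim the comparison of the integrals $\int_s^1 t^{|A|-1}\prod f(t\theta_{\max} y_v)\,{\rm d}t$ versus $\int_{1-1/|A|}^{1} \ldots$ with $s = 1 - M/n^{2(d-1)}$ (one loses a constant factor when $F \in \cP_2^{(d)}$ via Lemma \ref{comparison}), and slicing $[1-1/|A|,1]$ into $\Theta(n^{2(d-1)}/|A|)$ windows of length $M/n^{2(d-1)}$, yields the bound
\[
\P\!\left(L_n(A) \geq (\mu-\delta)n - \frac{H}{n^{d-1}} \,\middle|\, \cL_\delta \right) \leq O\!\left(\frac{|A|}{n^d}\right) + o(1),
\]
which is small once $\e'$ is small. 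The monotonicity/log-concavity hypothesis on $F \in \cP_1^{(d)} \cup \cP_2^{(d)}$ enters exactly as in two dimensions.

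Finally I would combine the two estimates exactly as in the short proof of Theorem \ref{t:deloc}. The crucial geometric input is that for any 1-dimensional continuous curve $\gamma: [0,1] \to [0,1]^d$ the $\e'$-cylinder $\gamma_n^{\e'}$ has volume $O(\e'^{\,d-1} n^d)$; so given $\e$ one can fix $\e'$ small enough to both apply the anti-concentration bound with $A = \gamma_n^{\e'}$ and to make this volume a sufficiently small fraction of $n^d$. The main potential obstacle is the matching of scales: one must verify that the barrier-density argument in Lemma \ref{super} still gives concentration at exactly $H/n^{d-1}$, which in turn forces $s = 1 - H/((\mu-\delta)n^{2(d-1)})$ in the anti-concentration computation, and then check that the slicing of $[1-1/|A|,1]$ into $\Theta(n^{2(d-1)}/|A|)$ windows combined with the log-concavity ratio bound still produces a constant-factor loss independent of $d$. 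Granting these checks, the proof concludes along the same lines as Theorem \ref{t:delocgen}.
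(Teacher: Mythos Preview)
Your approach is exactly what the paper intends (the paper omits the proof, noting only that it is identical to that of Theorem \ref{t:delocgen} with strong concentration now at scale $1/n^{d-1}$), and your outline of the FKG step, the anti-diagonal hyperplane argument, and the anti-concentration via the $(\bX_{A^c},\bY_A,Z_A)$ decomposition is correct in structure.

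There is, however, a genuine computational slip in your anti-concentration step that breaks the argument as written. You take $s = 1 - M/n^{2(d-1)}$, apparently by replacing both factors of $n$ in the two-dimensional $n^2$ by $n^{d-1}$. But only one of them changes: the deviation scale becomes $H/n^{d-1}$, while the length scale $(\mu-\delta)n$ of the path stays the same. Repeating Lemma \ref{prep23} gives
\[
L_n(A)\ge(\mu-\delta)n-\frac{H}{n^{d-1}}\ \Longrightarrow\ \frac{Z_A}{\theta_{\max}}\ge 1-\frac{H}{(\mu-\delta)\,n\cdot n^{d-1}}=1-\frac{M}{n^{d}},
\]
so the correct choice is $s=1-M/n^{d}$. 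With your larger $s$ (for $d>2$ one has $2(d-1)>d$), the inclusion $\{L_n(A)\ge(\mu-\delta)n-H/n^{d-1}\}\subseteq\{Z_A\ge s\theta_{\max}\}$ simply fails, so the bound you derive on $\P(Z_A\ge s\theta_{\max})$ does not control the quantity you want. Correspondingly, the interval $[1-1/|A|,1]$ should be sliced into $n^{d}/(M|A|)$ windows of width $M/n^{d}$, and the resulting ratio bound is $O(|A|/n^{d})$; this is in fact the answer you write down, but it does not follow from your $n^{2(d-1)}$ computation. Once this exponent is fixed, the rest of your proof goes through exactly as you describe.
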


\bibliography{delocalization}
\bibliographystyle{plain}
\end{document}